\newtheorem{theorem}{Theorem}
\newtheorem{lemma}[theorem]{Lemma}
\newtheorem{defn}[theorem]{Definition}
\newtheorem{prop}[theorem]{Proposition} 
\newtheorem*{prop1}{Proposition \ref{rootnlemma} (Restatement)}
\newtheorem*{prop2}{Proposition \ref{sbiglemma} (Restatement)}
\newtheorem*{lem14}{Lemma \ref{schrammshrinklemma} (Restatement)}
\newtheorem*{lem15}{Lemma \ref{mshrinklemma} (Restatement)}
\newtheorem*{lem16}{Lemma \ref{mgrowthlemma} (Restatement)}
\newtheorem*{lem17}{Lemma \ref{sshrinklemma} (Restatement)}
\newtheorem*{lem18}{Lemma \ref{sgrowthlemma} (Restatement)}
\theoremstyle{definition} 
\newtheorem{rmk}[theorem]{Remark}
\newtheorem{example}[theorem]{Example}
\newcommand{\prob}{\mathbb{P}}
\newcommand{\expect}{\mathbb{E}}
\numberwithin{equation}{section}
\title{A Coupling Argument for the Random Transposition Walk}
\author{Olena Blumberg}
\date{}
\begin{document}
\maketitle

\begin{abstract}
This paper explores the mixing time of the random transposition walk on the symmetric group $S_n$. While it has long been known that this walk mixes in $O(n \log n)$ time, this result has not previously been attained using coupling. A coupling argument showing the correct order mixing time is presented. This is accomplished by first projecting to conjugacy classes, and then using the Bubley-Dyer path coupling construction. In order to obtain appropriate bounds on the time it takes the path coupling to meet, ideas from Schramm's paper ``Compositions of Random Transpositions'' are used. 
\end{abstract}

\section{Introduction}

This paper studies the random transposition walk on the symmetric group $S_n$ -- in card shuffling terms, the possible permutations of a deck of $n$ cards. Here's a description of the random walk: lay $n$ cards out in a row, and pick one card uniformly with your right hand, and another card independently uniformly with your left hand (note that you may have picked the same card.) Then, swap the cards -- this is an extremely simple shuffling scheme for $n$ cards.

Below, we study the {\it mixing time} of the above random walk: that is, the number of shuffles that it takes to thoroughly mix up the deck (see Section \ref{backgroundanddefinitions} for a precise definition.) To be more specific, a coupling argument demonstrating that the mixing time of the random transposition walk is $O(n \log n)$ is presented. Coupling is an intuitive probabilistic technique that bounds mixing time in the following way: define a process $(X_t, Y_t)_{t\geq 0}$ such that both $(X_t)_{t\geq 0}$ and $(Y_t)_{t\geq 0}$ are Markov chains with the same transition matrix, but with $X_t$ starting at $x$ and $Y_t$ starting at $y$. As will be described more precisely in Section \ref{backgroundanddefinitions} below, the goal is to have the two chains meet: by the time that this has happened with high probability for every choice of $x$ and $y$, it can be shown that the Markov chain has mixed. This technique is usually traced back to Doeblin \cite{DoeblinCoupling}; two good reference books which illustrate its many uses are Lindvall's ``Lectures on the coupling method'' \cite{Lindvall} and Thorisson's ``Coupling, stationarity, and regeneration'' \cite{Thorisson}.

The existence of a coupling argument showing an $O(n \log n)$ mixing time is a long-standing open problem. Due to its simplicity and symmetry, the random transposition walk was one of the first ones considered in burgeoning field of Markov chains mixing times. As noted in \cite{DiaconisandShah}, the mixing time of this walk was first bounded by Aldous in 1980, who showed that it must be between order $n$ and $n^2$ and conjectured that it must be of order $n \log n$. This was proved in 1981 in ``Generating a random permutation with random transpositions'' by Diaconis and Shahshahani \cite{DiaconisandShah}. This paper uses Fourier analysis to show that the walk experiences a cut-off, mixing in a window of order $n$ around time $\frac{1}{2} n \log n$. 

The beautiful proof in \cite{DiaconisandShah} uses the tools of representation theory and Fourier analysis, and hence is non-probabilistic. While a purely probabilistic strong stationary time proof for an $O (n \log n)$ mixing time was discovered by Broder in 1985 \cite{BroderStrongStationary}, a coupling argument proved to be more elusive. The main difficulty is due to the fact that a Markovian coupling cannot succeed; indeed, Lemma \ref{Markoviancouplingfail} below shows that such an approach can never prove a bound of order better than $n^2$. It has been shown by Griffeath \cite{GriffeathMaximalCoupling} and then Pitman \cite{PitmanMaximalCoupling} that a maximal coupling must exist, but it evidently has to be non-Markovian. There has been continued interest in finding such a coupling -- for example, Peres named it as an interesting open problem in \cite{YuvalBCNotes}. This paper resolves this problem. (Another approach for finding such a non-Markovian coupling can be seen in the preprint ``Mixing times via super-fast coupling'' \cite{kovchegovburton}.)

This question is approached here by first projecting the random transposition walk to conjugacy classes. T
his projection is also a Markov chain, called a split-merge random walk \cite{SchrammLargeCycles}. Using the fact that the random transposition walk started from the identity is constant on conjugacy classes, it suffices to find the mixing time of the split-merge random walk. The path coupling technique of Bubley and Dyer \cite{BubleyDyer} is used to examine the split-merge random walk. However, this is not straightforward, since in the worst case scenario, the split-merge random walk contracts by only $1 - \frac{1}{n^2}$.  

It is shown here that on average, the split-merge random walk does indeed contract by $1 - \frac{1}{n}$, enabling the use of path coupling to conclude that the walk mixes in $O(n \log n)$ time. This argument does not, however, show cut-off: indeed, as noted in Remark \ref{constantlarge} below, the constant in front of the $n \log n$ is very large. To show that the contraction coefficient is of the right order, the techniques of Schramm from his paper ``Compositions of random transpositions'' \cite{SchrammLargeCycles} are used. He shows that large cycles in the random transposition walk emerge after time $\frac{n}{2}$, and then proves the law for the scaled cycles. Methods from ``Compositions of random transpositions'' have given rise to the wonderful paper ``Mixing times for random k-cycles and coalescence-fragmentation chains'' by Berestycki, Schramm, and Zeitouni \cite{BerestyckiZeitouniSchramm}, which uses probabilistic techniques to get the right answer for a generalization of the random transposition walk. 

\section{Background and Definitions}\label{backgroundanddefinitions}

Before stating the main result of this paper, a number of definitions are necessary. If $\mu$ and $\nu$ are two probability distributions on a finite state space $\Omega$, then the total variation distance between $\mu$ and $\nu$ is defined to be $\left\| \mu - \nu \right\|_{TV} = \frac{1}{2}\sum_{x\in \Omega} | \mu(x) - \nu(x)|$. For a Markov chain with transition probabilities $Q(x, y)$ and stationary distribution $\pi$, the total variation distance at time $t$ is defined to be $d(t) =  \left\| Q^t(x, \cdot) - \pi\right\|_{TV}$ and the mixing time is
\begin{equation*}
\tau_{\mathrm{mix}}(\epsilon) = \min\left\{t \left| \right. d(t) \leq \epsilon\right\}
\end{equation*}
Conventionally, $\tau_{\mathrm{mix}}$ is defined to be $\tau_{\mathrm{mix}}(1/4)$. 

A coupling of a pair of Markov chains both with transition matrix $Q$ is a process $(X_t, Y_t)_{t\geq 0}$ such that both $(X_t)_{t\geq 0}$ and $(Y_t)_{t\geq 0}$ are Markov chains with transition matrix $Q$, but which might have different starting distributions. The coupling inequality (Corollary 5.3 in \cite{YuvalBook}) states that if $(X_t, Y_t)$ is a coupling of a pair of Markov chains such that $X_0 = x$ and $Y_0 = y$, and $T_{x, y}$ is a random time at which the chains have met, then 
\begin{equation*}
d(t) \leq \max_{x, y} \mathbb{P} \left\{ T_{x,y} > t \right\}
\end{equation*}
The above inequality allows coupling to be used to bound mixing times. It is now possible to state the main result of this paper: 

\begin{theorem}\label{schrammcouplingtheorem} 
There exists a coupling argument that shows that the random transposition walk on $S_n$ mixes in time of order $n \log n$: that is, it demonstrates that there exists a constant $C$ such that 
\begin{equation*}
\tau_{\mathrm{mix}} \leq C n\log n 
\end{equation*}   
\end{theorem}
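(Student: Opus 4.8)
The plan is to reduce the problem to the split-merge random walk on integer partitions of $n$, apply the Bubley--Dyer path coupling technique, and replace the (too weak) one-step contraction estimate by an \emph{averaged} contraction estimate obtained over a window of $\Theta(n)$ steps, using Schramm's control of the emergence of large cycles.

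\textbf{Step 1: Projection to conjugacy classes.} First I would record that the random transposition walk, started at the identity, is at every time supported on a single conjugacy class and uniform within it; since conjugacy classes of $S_n$ are indexed by cycle type, the image of the walk under the cycle-type map is again a Markov chain --- the split-merge walk, which picks two of the $n$ labels independently and uniformly and either merges the two blocks containing them or splits their common block. The stationary distribution of the random transposition walk pushes forward to that of the split-merge walk, and by a standard lifting argument the total variation distance to stationarity for the original walk (from the identity) equals that for the split-merge walk. Hence it suffices to show $\tau_{\mathrm{mix}} \le Cn\log n$ for the split-merge walk.

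\textbf{Step 2: Path coupling setup, and why one step is not enough.} On the space of partitions of $n$ I would put a graph metric $\rho$ in which adjacent partitions differ by a single elementary move (a split, a merge, or the transfer of one label between blocks), so that the diameter is $O(n)$; by the path coupling theorem it is enough to couple one step of the chain from two adjacent starting partitions $\lambda \sim \lambda'$ and bound $\expect[\rho(X_1,Y_1)]$. The natural coupling uses the same pair of labels in both chains; a discrepancy only gets resolved (or moved favorably) when the chosen pair of labels interacts with the differing blocks, which in the worst case --- two small blocks --- happens with probability of order $1/n^2$. This yields contraction only $1-\Theta(1/n^2)$, hence only an $O(n^2\log n)$ bound, consistent with the obstruction in Lemma \ref{Markoviancouplingfail}: no purely one-step Markovian analysis can do better.

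\textbf{Step 3: Averaged contraction via Schramm's method.} The heart of the argument is to run the coupling for a window of $T=\Theta(n)$ steps and show $\expect[\rho(X_T,Y_T)] \le (1-c/n)\,\rho(X_0,Y_0)$ for adjacent starting points. Using the techniques of ``Compositions of random transpositions,'' after order $n$ steps the partition typically contains a giant block of size a positive fraction of $n$; once a giant is present, the chosen pair of labels lands inside it with probability $\Omega(1)$, so the discrepancy between the two chains is absorbed or rearranged at rate $\Omega(1/n)$ rather than $\Omega(1/n^2)$. Making this quantitative requires tracking the relevant statistics of the partition --- the size $s$ of the largest block and the auxiliary quantity $m$ --- and controlling how they shrink and grow under the dynamics over the window; this is precisely the role of the shrink/growth lemmas (Lemma \ref{schrammshrinklemma} and the $m$- and $s$-versions) and of the $\sqrt{n}$ and ``$s$ big'' propositions. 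Assembling these estimates gives the averaged $1-\Theta(1/n)$ contraction.

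\textbf{Step 4: Conclusion.} Iterating the averaged contraction over $\Theta(\log n)$ consecutive windows drives the expected distance between the coupled chains --- from adjacent starting points, hence by the triangle inequality from arbitrary ones --- below $1/4$ after $\Theta(n\log n)$ steps; by Markov's inequality the chains have met with probability at least $3/4$, and the coupling inequality gives $\tau_{\mathrm{mix}} \le Cn\log n$. I expect Step 3 to be the main obstacle: controlling the structure of the split-merge partition (in particular the appearance and size of the giant block, and the behavior of $s$ and $m$) uniformly over a window of $\Theta(n)$ steps, and feeding this back into a multi-step path coupling bound, is where the real work lies; the resulting constant $C$ is large and the method does not see the cutoff.
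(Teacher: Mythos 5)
Your proposal follows essentially the same route as the paper: project to the split-merge chain, set up Bubley--Dyer path coupling on adjacent partitions with diameter $O(n)$, prove an averaged contraction of $1-\Theta(1/n)$ over a window of $\Theta(n)$ steps via Schramm's large-cycle analysis, and iterate over $\Theta(\log n)$ windows to conclude via the coupling inequality. One small correction of emphasis: in the paper the key statistic $s(X_t,Y_t)$ is the \emph{smallest part in which the two coupled partitions differ} (with $m$ the medium such part), not the largest block of a single partition, and the one-step coupling probability is $4s(X_t,Y_t)/n^2$, so the real task---carried out in the cited propositions---is to show that the discrepancy parts themselves grow to order $n$, not merely that a giant block appears.
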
 

Before launching into the proof, it is instructive to consider the many ways an $O(n \log n)$ mixing time has been obtained for this walk, as well as the uses of the result. This bound was first obtained by Diaconis and Shahshahani in \cite{DiaconisandShah}. This result is beautiful and extremely precise; however, the scope of the technique is limited as it requires fully diagonalizing the random walk. While this is possible for a number of walks, including walks that are not random walks on groups, this is a drawback to the method. This result is also extremely useful for comparison theory. As shown by Diaconis and Saloff-Coste in \cite{DiaconisSaloffComparisonGroups}, the Dirichlet form can be used to compare all the eigenvalues of the chain, resulting in good bounds for a variety of walks. For example, Jonasson uses this result in \cite{JonassonOverlapping} to show that the overlapping cycle shuffle mixes in $O(n^3 \log n)$ time.

As noted above, the first probabilistic proof of the result was by Broder \cite{BroderStrongStationary} and used {\it strong stationary times}: stopping times $T$ such that the conditional distribution of $X_T$ given $T$ is stationary. Since the stationary distribution for the random transposition walk is uniform, this is equivalent to stating that for all $\sigma \in S_n$ and all positive integers $k$, 
\begin{equation*}
\prob(X_T = \sigma \left| \right. T = k ) = \frac{1}{n!} 
\end{equation*}  
The following is Broder's strong stationary time argument, as summarized in Chapter 9 of \cite{YuvalBook}. Let $R_t$ and $L_t$ be the cards chosen by the right and left hand, respectively. Start the process with no marked cards, and use the following marking scheme: at each step, mark a card $R_t$ if $R_t$ is unmarked, and either (a) $L_t$ is marked or (b) $R_t = L_t$. Define the stopping time $T$ to be the first time all $n$ cards are marked. It is easy to show that this is indeed a strong stationary time, and that $T$ is around $2n \log n$. This argument provides an $O(n \log n)$ mixing time, but not the correct constant. It was improved by Matthews \cite{MatthewsStationaryTranspositions} in 1988 by creating a more complicated rule for marking the cards. This argument showed a cut-off for the walk at time $\frac{1}{2} n \log n$. These arguments are probabilistic and intuitive, and elucidate the reasons for the mixing time in a way that Fourier analysis does not. However, they are heavily reliant on the symmetry of the random transposition walk and as such are difficult to generalize. 

The recent paper by Berestycki, Schramm and Zeitouni \cite{BerestyckiZeitouniSchramm} uses a different approach. Their technique provides the correct answer for the following generalization of the Markov chain: instead of using a uniformly chosen random transposition at each step, a random $k$-cycle is used. This paper obtains the correct $\frac{1}{k} n \log n$ answer for any fixed $k$. Like this paper, they begin by projecting the walk to conjugacy classes and then make use of the results of Schramm in \cite{SchrammLargeCycles}. The tools of both this result and Schramm's original paper are graph theoretic: for example, a transposition is considered to be an edge in a random graph process on $n$ vertices. Unfortunately, this exciting method again requires considerable symmetry, since the projection to conjugacy classes has to be a Markov chain. This is also a drawback of the coupling approach which is presented here. 

Another intriguing technique explored by Burton and Kovchegov \cite{kovchegovburton} uses non-Markovian coupling. While I have found the ideas in this paper difficult, the approximate approach is that the standard coupling argument by Aldous which results in $O(n^2)$  bound can be improved by `looking into the future.' A non-Markovian argument with a somewhat similar flavor has previously been implemented for the coloring chain by Hayes and Vigoda \cite{VigodaNonMarkovian}. Here's a very approximate sketch of the idea for random transpositions: say that a pair $(\sigma, \tau)$ in $S_n$ currently differs in the cards labeled $i$ and $j$. The standard coupling for this pair transposes the cards with the same labels in both $\sigma$ and $\tau$, unless the next transposition is $(i, j)$. However, it is possible to do something different: if the next step transposes cards labeled $i$ and $k$ in $\sigma$, the next step in $\tau$ can transpose either cards labeled $i$ and $k$ or cards labeled $j$ and $k$. If the coupling is Markovian, then the choice makes no difference; however, `looking into the future' can substantially improve the bounds. In work stemming from an unrelated project, I hope to show this for a number of different walks in an upcoming paper. 

The argument in this paper proceeds by projecting the walk to conjugacy classes.  It is a well-known result that the conjugacy classes of $S_n$ are indexed by partitions of $n$. Recall that a partition of $n$ is an $m$-tuple $(a_1, a_2, \dots, a_m)$ of positive integers that sum to $n$, where $m$ can be any integer, and $a_1 \geq a_2 \geq \dots \geq a_m$. Let $\mathcal{P}_n$ be the set of partitions of $n$. The projection of the random transposition walk on $S_n$ to conjugacy classes is also a Markov chain, called a {\it split-merge} random walk. It is often referred to as a coagulation-fragmentation chain, and it has been extensively studied -- see \cite{DiaconisZeitouniPD1} for some references. 

\begin{defn}\label{splitmergedesc}
Assume the random walk is currently at partition $(a_1,\dots, a_m)$. Then, there are three possibilities for the next move: either merge a pair of parts, split a part into two pieces, or stay in place. (All of these moves are followed by rearranging the new parts to be in non-decreasing order.) 
\begin{itemize}
\item {\bf Split:} A pair $a_i$ can be replaced by the pair $(r, a_i-r)$. For each $r$ between $1$ and $a_i-1$, the probability of this particular split is $\frac{a_i}{n^2}$. 

Note that this phrasing takes the order into account: here, a more convenient phrasing is the following: for each $r < \frac{a_i}{2}$, split $a_i$ into $\{r, a_i - r\}$ with probability $\frac{2a_i}{n^2}$. If $a_i$ is even and $r = \frac{a_i}{2}$, split $a_i$ into $\{r, a_i-r\}$ with probability $\frac{a_i}{n^2}$. 

\item {\bf Merge:} Replace the parts $a_i$ and $a_j$ by $a_i + a_j$. This is done with probability $\frac{2a_i a_j}{n^2}$. 

\item {\bf Stay in Place:} Stay at the partition $(a_1, a_2, \dots, a_m)$ with probability $\frac{1}{n}$. 
\end{itemize}
\end{defn}

\begin{example}
Here is an example of the split-merge random walk. Let $n = 5$, and assume the walk is currently at $(4,1)$. Then, the next step $X_1$ is distributed as follows: 
\begin{equation*}
X_1 = 
\begin{cases}
(5) & \text{with probability } \frac{8}{25}\\
(4,1) & \text{with probability } \frac{1}{5}\\
(3,1,1) & \text{with probability } \frac{8}{25}\\
(2,2,1) & \text{with probability } \frac{4}{25}
\end{cases} 
\end{equation*}
\end{example} 

The primary walk under consideration is the split-merge random walk, but for some of the proofs, the original transposition walk is needed. With that in mind, make the following two definitions: 

\begin{defn}\label{Cycdefn}
For $\alpha \in S_n$, define $\mathrm{Cyc}(\alpha)$ to be the partition corresponding to the cycle type of $\alpha$. For $\sigma \in \mathcal{P}_n$, let
\begin{equation*}
\mathrm{Perm}(\sigma) = \{ \alpha \in S_n \left| \right. \mathrm{Cyc}(\alpha) = \sigma\}
\end{equation*}
be the set of all permutations with cycle type $\sigma$. 
\end{defn}

\begin{defn}\label{transpositionwalknotation}
Let $(X_t)_{t\geq 0}$ denote the split-merge random walk, and let $(\bar{X}_t)_{t\geq 0}$ denote the random transposition walk, so that for all $t$, 
\begin{equation*}
X_t = \mathrm{Cyc}(\bar{X}_t)
\end{equation*}
Furthermore, let $P$ and $\pi$ be the transition matrix and stationary distribution for $(X_t)_{t\geq 0}$, respectively, and define $\bar{P}$ and $\bar{\pi}$ analogously for $(\bar{X}_t)_{t\geq 0}$.
\end{defn}

The next argument shows it  suffices to consider the split-merge random walk. The following proof take a little bit of space to write down, but is actually very simple -- the key idea is that the random transposition walk started at the identity is always uniformly distributed over each conjugacy class. (This also follows from a more general result -- see Chapter 3F of \cite{DiaconisBook}.)

\begin{lemma}\label{equivalencesplittrans}
Let $P, \bar{P}, \pi$ and $\bar{\pi}$ be defined as in Definition \ref{transpositionwalknotation} above. Then,  
\begin{equation*}
\max_{\alpha \in S_n} \left\| \bar{P}^t(\alpha, \cdot) - \bar{\pi} \right\|_{TV} \leq  \max_{\sigma \in \mathcal{P}_n} \left\| P^t(\sigma, \cdot) - \pi \right\|_{TV} 
\end{equation*} 
\end{lemma}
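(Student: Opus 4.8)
The plan is to show that the split-merge walk is the image of the random transposition walk under the projection $\mathrm{Cyc}$, and that this projection is "lumpable" in Dynkin's sense, so that distances can only decrease. Concretely, I would first observe that for any $\alpha \in S_n$ and any $\beta \in S_n$, the cycle type of $\alpha \beta$ depends only on $\mathrm{Cyc}(\alpha)$ when $\beta$ is a uniformly random transposition; more precisely, the distribution of $\mathrm{Cyc}(\bar X_{t+1})$ given $\bar X_t = \alpha$ depends on $\alpha$ only through $\mathrm{Cyc}(\alpha)$, and equals $P(\mathrm{Cyc}(\alpha), \cdot)$. This is exactly the content of Definition \ref{splitmergedesc}: multiplying by the transposition $(i,j)$ merges the two cycles containing $i$ and $j$ if they are distinct, and splits the cycle containing both if they coincide, and the stated probabilities are obtained by counting the ordered pairs $(i,j)$ that produce each outcome. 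So $(\mathrm{Cyc}(\bar X_t))_{t \geq 0}$ is indeed a Markov chain with transition matrix $P$, consistent with Definition \ref{transpositionwalknotation}.

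Next I would record the key structural fact: for every $t$, if $\bar X_0$ is the identity then $\bar X_t$ is uniformly distributed on $\mathrm{Perm}(\sigma)$ conditioned on $\mathrm{Cyc}(\bar X_t) = \sigma$. This follows because the uniform distribution on any conjugacy class is invariant under conjugation, and the transition law of the transposition walk is conjugation-equivariant: if $\bar X_t$ is uniform on $\mathrm{Perm}(\sigma)$ and we multiply by an independent uniform transposition, the result, conditioned on its cycle type being $\sigma'$, is uniform on $\mathrm{Perm}(\sigma')$. Inducting on $t$ from the (trivially uniform) singleton class of the identity gives the claim. In particular $\bar P^t(\mathrm{id}, \cdot)$ is constant on each conjugacy class, and its value there is $P^t(\{(1^n)\}, \sigma)/|\mathrm{Perm}(\sigma)|$ where $(1^n)$ denotes the partition of $n$ into singletons.

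With these two facts in hand the inequality is a short computation. Since $\bar\pi$ is uniform on $S_n$, it too is constant on conjugacy classes, and its pushforward under $\mathrm{Cyc}$ is $\pi$ (the stationary distribution of $P$), with $\bar\pi$ assigning mass $\pi(\sigma)/|\mathrm{Perm}(\sigma)|$ to each $\alpha \in \mathrm{Perm}(\sigma)$. Starting the transposition walk at the identity, both $\bar P^t(\mathrm{id},\cdot)$ and $\bar\pi$ are constant on each $\mathrm{Perm}(\sigma)$, so
\begin{equation*}
\left\| \bar P^t(\mathrm{id}, \cdot) - \bar\pi \right\|_{TV} = \frac12 \sum_{\sigma \in \mathcal{P}_n} \sum_{\alpha \in \mathrm{Perm}(\sigma)} \left| \frac{P^t((1^n),\sigma)}{|\mathrm{Perm}(\sigma)|} - \frac{\pi(\sigma)}{|\mathrm{Perm}(\sigma)|} \right| = \left\| P^t((1^n), \cdot) - \pi \right\|_{TV}.
\end{equation*}
Finally, I would upgrade the left side from the identity to an arbitrary starting permutation $\alpha$: by the conjugation-equivariance of the walk, $\bar P^t(\alpha, \cdot)$ is the pushforward of $\bar P^t(\mathrm{id}, \cdot)$ under conjugation by any fixed element sending $\mathrm{id}$'s "position" appropriately — more carefully, $\bar X_t$ started from $\alpha$ has the same distribution as $\gamma (\bar X_t^{\mathrm{id}}) \gamma^{-1} \cdot (\text{stuff})$; the clean statement is that the total variation distance to $\bar\pi$ from $\bar P^t(\alpha,\cdot)$ depends on $\alpha$ only through $\mathrm{Cyc}(\alpha)$, because $\bar\pi$ is conjugation-invariant and the walk is conjugation-equivariant, so left-multiplying histories by a conjugation is a measure-preserving bijection fixing $\bar\pi$. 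Hence $\max_\alpha \|\bar P^t(\alpha,\cdot) - \bar\pi\|_{TV}$ is attained (in value) at a representative of some conjugacy class, and a parallel computation to the display above bounds it by $\max_{\sigma} \|P^t(\sigma,\cdot) - \pi\|_{TV}$.

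The main obstacle is bookkeeping rather than depth: one must be careful that the projection is genuinely lumpable (so that $(\mathrm{Cyc}(\bar X_t))$ is Markov regardless of how ties within a class are broken) and that starting from a non-identity $\alpha$ really does reduce to starting from a class representative. Both reduce to the single observation that conjugation by a fixed permutation commutes with the transposition-walk dynamics and fixes the uniform measure; I would state this equivariance as a small lemma or inline remark and then let the total-variation identities above follow mechanically. No heavy machinery is needed beyond Definition \ref{splitmergedesc} and the invariance of conjugacy classes.
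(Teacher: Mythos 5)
Your first three steps — lumpability of $\mathrm{Cyc}$, the fact that the walk started at the identity is uniform on each conjugacy class (your conjugation-equivariance induction is the same symmetry the paper invokes), and the resulting identity $\left\| \bar{P}^t(\mathrm{id},\cdot) - \bar{\pi} \right\|_{TV} = \left\| P^t((1^n),\cdot) - \pi \right\|_{TV}$ — match the heart of the paper's proof. The gap is in the final reduction from an arbitrary starting permutation $\alpha$ to this computation. Conjugation-equivariance only tells you that $\left\| \bar{P}^t(\alpha,\cdot) - \bar{\pi} \right\|_{TV}$ depends on $\alpha$ through $\mathrm{Cyc}(\alpha)$; it does \emph{not} let you run "a parallel computation" for a non-identity class representative. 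That computation needs $\bar{P}^t(\alpha,\cdot)$ to be constant on conjugacy classes, which is false for $\alpha \neq \mathrm{id}$: one has $\bar{P}^t(\alpha,\beta) = \bar{P}^t(\mathrm{id},\alpha^{-1}\beta)$, a class function of $\alpha^{-1}\beta$, not of $\beta$ (at $t=0$ it is $\delta_\alpha$, not uniform on the class of $\alpha$). Without that constancy, projecting by $\mathrm{Cyc}$ only gives $\left\| P^t(\mathrm{Cyc}(\alpha),\cdot) - \pi \right\|_{TV} \leq \left\| \bar{P}^t(\alpha,\cdot) - \bar{\pi} \right\|_{TV}$, which is the wrong direction. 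A concrete counterexample to the intermediate claim: for $n=3$, $\alpha$ a $3$-cycle and $t=0$, the permutation-level distance is $5/6$ while the projected distance from $(3)$ is $2/3$.

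The fix is the one the paper uses and is simpler than what you attempted: the random transposition walk is a random walk on a group, so it is invariant under \emph{left translation}, not just conjugation. Hence $\left\| \bar{P}^t(\alpha,\cdot) - \bar{\pi} \right\|_{TV} = \left\| \bar{P}^t(\mathrm{id},\cdot) - \bar{\pi} \right\|_{TV}$ for \emph{every} $\alpha$ (left-multiplication by $\alpha$ is a bijection carrying $\bar{P}^t(\mathrm{id},\cdot)$ to $\bar{P}^t(\alpha,\cdot)$ and fixing the uniform measure $\bar{\pi}$). This collapses the maximum over $\alpha$ to the identity start, where your display applies verbatim, and the right-hand side is then trivially bounded by $\max_{\sigma} \left\| P^t(\sigma,\cdot) - \pi \right\|_{TV}$. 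Replace your last paragraph with this translation-invariance observation and the proof is complete and essentially identical to the paper's.
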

\begin{proof}[\bf Proof:]
Since the random transposition walk is a random walk on a group, it's vertex transitive. Therefore, for all $\alpha \in S_n$, 
\begin{equation*}
\left\|\bar{P}^t(\alpha, \cdot) - \bar{\pi} \right\|_{TV}= \left\|\bar{P}^t(id, \cdot) - \bar{\pi} \right\|_{TV}
\end{equation*}
where $id$ is the identity permutation. Thus, it suffices to show that 
\begin{equation*}
\left\|\bar{P}^t(id, \cdot) - \bar{\pi} \right\|_{TV} \leq  \max_{\sigma \in \mathcal{P}_n} \left\| P^t(\sigma, \cdot) - \pi \right\|_{TV} 
\end{equation*} 
Now, let $\sigma_0 = \text{Cyc}(id) = (1,1,\dots, 1)$. It suffices to show that
\begin{equation}\label{equaldistancetostationarity} 
\left\|\bar{P}^t(id, \cdot) - \bar{\pi} \right\|_{TV} = \left\| P^t(\sigma_0, \cdot) - \pi \right\|_{TV} 
\end{equation}

Since the split-merge random walk is a projection of the random transposition walk, for $\sigma \in \mathcal{P}_n$, 
\begin{equation} \label{piissymmetric}
\pi(\sigma) = \sum_{\alpha \in \mathrm{Perm}(\sigma)} \bar{\pi}(\alpha) =\frac{\left| \mathrm{Perm}(\sigma)\right|}{n!}
\end{equation}
since $\bar{\pi}$ is the uniform distribution on $S_n$. Similarly, 
\begin{equation*}
P^t(\sigma_0, \sigma) = \sum_{\alpha \in \mathrm{Perm}(\sigma)} \bar{P}^t(id, \alpha)
\end{equation*} 
Furthermore, note that both the identity permutation and the the random transposition walk are symmetric with respect to $\{1, 2, \dots, n\}$. Hence for any $\alpha_1, \alpha_2$ with the same cycle structure, $\bar{P}^t(id, \alpha_1) = \bar{P}^t(id, \alpha_2)$ for all $t$. Combining this with the equation above shows that for $\alpha \in \mathrm{Perm}(\sigma)$, 
\begin{equation}\label{Pissymmetric}
P^t(\sigma_0, \sigma) = \left|\mathrm{Perm}(\sigma)\right| \bar{P}^t(id, \alpha) 
\end{equation}
Using Equations \eqref{piissymmetric} and \eqref{Pissymmetric}, 
\begin{equation*}
\sum_{\alpha \in \mathrm{Perm}(\sigma)}  \left| \bar{P}^t(id, \alpha) - \frac{1}{n!} \right| = \left| P^t(\sigma_0, \sigma) - \pi(\sigma) \right| 
\end{equation*} 
Finally, putting all this together, 
\begin{align*}
2\left\|\bar{P}^t(id, \cdot) - \bar{\pi} \right\|_{TV} &= \sum_{\alpha \in S_n} \left| \bar{P}^t(id, \alpha) - \frac{1}{n!}\right| = \sum_{\sigma \in \mathcal{P}_n} \sum_{\alpha \in \mathrm{Perm}(\sigma)}  \left| \bar{P}^t(id, \alpha) - \frac{1}{n!} \right|\\
&= \sum_{\sigma \in \mathcal{P}_n} \left| P^t(\sigma_0, \sigma) - \pi(\sigma) \right| = 2\left\| P^t(\sigma_0, \cdot) - \pi \right\|_{TV}
\end{align*}
which proves Equation \eqref{equaldistancetostationarity}, as desired. \end{proof}
\begin{rmk}
Although it is not needed, it is very easy to use the triangle inequality to prove the opposite inequality to the one in Lemma \ref{equaldistancetostationarity}. Hence, the two quantities are actually equal. 
\end{rmk}

Before proceeding to sketch the upcoming proof, it is shown that a Markovian coupling for the random transposition walk cannot hope to give an $O(n \log n)$ mixing time. 

\begin{lemma}\label{Markoviancouplingfail}
A Markovian coupling $(\bar{X}_t, \bar{Y}_t)$ of the random transposition walk takes at least $\Omega(n^2)$ time to meet. 
\end{lemma}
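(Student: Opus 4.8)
The plan is to track the Cayley distance
\[
  d_t := n - c(\pi_t), \qquad \pi_t := \bar{Y}_t \bar{X}_t^{-1},
\]
where $c(\cdot)$ is the number of cycles of a permutation (fixed points included); equivalently $d_t$ is the graph distance between $\bar X_t$ and $\bar Y_t$ in the Cayley graph of $S_n$ generated by all transpositions, and the two chains have met exactly when $d_t=0$. I would show that the coupling can reach $d_t=0$ only out of states with $d_t\le 2$, and that from such a state the probability of meeting on the very next step is only $O(1/n^2)$; a union bound over $t$ then gives $\prob(T\le t)=O(t/n^2)$, forcing the meeting time $T$ to be $\Omega(n^2)$.

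First I would set up the recursion. Writing a step of each marginal as $\bar X_{t+1}=\sigma_{t+1}\bar X_t$ and $\bar Y_{t+1}=\rho_{t+1}\bar Y_t$, one gets $\pi_{t+1}=\rho_{t+1}\,\pi_t\,\sigma_{t+1}^{-1}$, so the chains meet at time $t+1$ if and only if $\pi_t=\rho_{t+1}\sigma_{t+1}$. The Markov property enters here and only here: conditionally on $\mathcal F_t:=\sigma(\bar X_0,\bar Y_0,\dots,\bar X_t,\bar Y_t)$ — which for a Markovian coupling is the same as conditioning on $(\bar X_t,\bar Y_t)$ — each of $\sigma_{t+1}$ and $\rho_{t+1}$ is marginally a uniform random transposition (identity with probability $1/n$), so it equals any prescribed transposition with $\mathcal F_t$-conditional probability $2/n^2$. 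I would also record the elementary fact that multiplying a permutation by a transposition changes its number of cycles by exactly $\pm 1$ (a cycle splits, or two cycles merge), so $\lvert d_{t+1}-d_t\rvert\le 2$; in particular $\prob(d_{t+1}=0\mid\mathcal F_t)=0$ whenever $d_t\ge 3$.

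The heart of the proof is a short case analysis of $\prob(d_{t+1}=0\mid\mathcal F_t)$ on the event $\{1\le d_t\le 2\}$, where $\pi_t$ is a transposition, a $3$-cycle, or a product of two disjoint $2$-cycles. A product of at most two elements of $\{\mathrm{id}\}\cup\{\text{transpositions}\}$ has one of exactly these three cycle types (or is the identity), and a direct check shows each transposition has exactly two factorizations of this form ($\mathrm{id}\cdot\tau$ and $\tau\cdot\mathrm{id}$), each $3$-cycle exactly three ($(abc)=(ab)(bc)=(bc)(ca)=(ca)(ab)$), and each product of two disjoint $2$-cycles exactly two. Thus there are at most three favorable ordered pairs $(\rho,\sigma)$, and for each, $\prob(\sigma_{t+1}=\sigma,\ \rho_{t+1}=\rho\mid\mathcal F_t)\le\min\{\prob(\sigma_{t+1}=\sigma\mid\mathcal F_t),\prob(\rho_{t+1}=\rho\mid\mathcal F_t)\}\le 2/n^2$, using that at least one of $\rho,\sigma$ in each pair is an honest transposition (marginal probability $2/n^2$, not the larger value $1/n$). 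Hence $\prob(d_{t+1}=0\mid\mathcal F_t)\le 6/n^2$ on $\{1\le d_t\le 2\}$ and $=0$ otherwise. Since the first meeting time satisfies $\{T=s\}\subseteq\{d_{s-1}\ge 1,\ d_s=0\}$ for $s\ge 1$ and $T\ge 1$ whenever $\bar X_0\ne\bar Y_0$,
\[
  \prob(T\le t)\ \le\ \sum_{s=1}^{t}\prob\big(d_{s-1}\in\{1,2\},\ d_s=0\big)\ \le\ \sum_{s=1}^{t}\frac{6}{n^2}\ =\ \frac{6t}{n^2},
\]
so $\prob(T>t)\ge\tfrac12$ whenever $t\le n^2/12$, and in particular $\expect[T]=\Omega(n^2)$ — in fact uniformly over all pairs of distinct starting states, so the slow ``endgame'' near $d_t=0$ is the bottleneck.

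The only genuine work is the case analysis bounding $\prob(d_{t+1}=0\mid\mathcal F_t)$; the rest is a routine union bound. The point that deserves care is simply to make sure the Markov property is really used: it enters exactly in the claim that conditioning on the full history $\mathcal F_t$ (not merely on $\bar Y$'s own past) leaves $\rho_{t+1}$ uniform over transpositions, and symmetrically for $\sigma_{t+1}$ — which is precisely what a non-Markovian coupling is allowed to violate, and why such couplings can beat the $n^2$ barrier.
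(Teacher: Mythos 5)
Your proof is correct and follows essentially the same route as the paper: bound the one-step meeting probability by $6/n^2$ given the current state (possible only when the chains are at Cayley distance $1$ or $2$, using the Markov property to control each marginal step), then sum over time to get the $\Omega(n^2)$ lower bound. Your write-up simply makes explicit the case analysis and union bound that the paper's proof sketches.
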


\begin{proof}[\bf Proof:]
It easy to check that wherever the two random transposition walks currently are, if $\bar{X}_t \neq \bar{Y}_t$, then 
\begin{equation*}
\prob\left(\bar{X}_{t+1} = \bar{Y}_{t+1} \right) \leq \frac{6}{n^2} 
\end{equation*} 
To verify this, note that if $\bar{X}_t$ and $\bar{Y}_t$ differ only in the transposition $(i, j)$, then the only way to meet is to transpose $i$ and $j$ in one of them, and to stay in place in the other one; similar arguments hold if $\bar{X}_t$ and $\bar{Y}_t$ are two transpositions apart, and in all other cases, the probability of meeting at the next step is $0$. Combining the above inequality with the Markov property leads to the desired result. 
\end{proof}

Turn next to an explanation of the idea behind the coupling. The argument uses path coupling -- that is, coupling a pair of split-merge random walks started at a neighboring pair of elements. This technique was invented by Bubley and Dyer in \cite{BubleyDyer}; a good reference is Chapter 14 of \cite{YuvalBook}. 
To be precise, endow the state space $\Omega$ with a connected graph structure: that is, select a set of edges $E'$ between elements of $\Omega$, such that for any $u, v \in \Omega$, there exists a path between $u$ and $v$ only using the edges in $E'$. It is then only necessary to define a coupling for $(x, y) \in E'$. 

Assign lengths $l(x, y)\geq 1$ to each edge $(x,y) \in E'$, and define a path metric $\rho$ on $\Omega$ by 
\begin{align*}
\rho(z, w) = \min \left\{\sum_{i = 0}^{n-1} l(x_i, x_{i+1}) \left| \right. x_0 = z, x_n = w, (x_i, x_{i+1}) \in E' \text{ for all } i \right\}
\end{align*}
Furthermore, define the diameter of the set $\Omega$ in the usual way as $\mathrm{diam}(\Omega) = \max_{u, v \in \Omega} \rho(u, v)$
The following theorem is the basic path coupling bound. 
\begin{theorem}\label{pathcoupling} 
Let $(X_t)_{t\geq 0}$ be a Markov chain on a set $\Omega$, and let $E'$, $l$ and $\rho$ be defined as above. Let $(X_1, Y_1)$ be the first step of a coupling started at $(x, y) \in E'$. Then, if there is a $\kappa <1$ such that for every $(x, y) \in E'$, 
\begin{equation}\label{condforpathcoupling}
\mathbb{E}\left[ \rho(X_1, Y_1)\right] \leq \kappa \rho(x, y)
\end{equation}
then for all $t \geq 1$, 
\begin{equation*}
d(t) \leq \textnormal{diam}(S) \kappa^t
\end{equation*} 
\end{theorem}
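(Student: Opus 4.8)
The plan is to upgrade the one-step contraction hypothesis \eqref{condforpathcoupling}, which is assumed only along edges of $E'$, to a contraction that holds for \emph{every} pair of states, then to iterate that bound using the Markov property and feed the result into the coupling inequality. Since the hypothesis is about a single step of a coupling started at a neighboring pair, the whole argument is really just bookkeeping around one genuinely substantive construction.

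That construction is the upgrade step. Given arbitrary $z, w \in \Omega$, I would fix a geodesic $z = x_0, x_1, \dots, x_r = w$ in the graph $(\Omega, E')$, so that $\rho(z,w) = \sum_{i=0}^{r-1} l(x_i, x_{i+1})$ and each $(x_i, x_{i+1}) \in E'$. For each $i$ the hypothesis furnishes a one-step coupling of the chains from $x_i$ and from $x_{i+1}$; write $\theta_i$ for its law on $\Omega \times \Omega$, which has marginals $P(x_i, \cdot)$ and $P(x_{i+1}, \cdot)$. I then splice these couplings along the path: sample $(Z^0, \dots, Z^r)$ by first drawing $Z^0 \sim P(x_0, \cdot)$, and then, for each $i$ in turn, drawing $Z^{i+1}$ from the conditional law under $\theta_i$ of the second coordinate given that the first coordinate equals $Z^i$. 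Since the first-coordinate marginal of $\theta_i$ is $P(x_i, \cdot)$, which by induction is the law of $Z^i$, this sequential recipe is consistent and gives $Z^i \sim P(x_i, \cdot)$ for every $i$; in particular $(Z^0, Z^r)$ is a bona fide coupling of $P(z, \cdot)$ with $P(w, \cdot)$. Because $(Z^i, Z^{i+1}) \sim \theta_i$, the triangle inequality for $\rho$, followed by \eqref{condforpathcoupling} applied edge-by-edge together with $\rho(x_i, x_{i+1}) \le l(x_i, x_{i+1})$, yields
\begin{equation*}
\mathbb{E}\big[\rho(Z^0, Z^r)\big] \;\le\; \sum_{i=0}^{r-1} \mathbb{E}\big[\rho(Z^i, Z^{i+1})\big] \;\le\; \kappa \sum_{i=0}^{r-1} l(x_i, x_{i+1}) \;=\; \kappa\, \rho(z,w).
\end{equation*}

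With contraction available for all pairs, I would build a Markovian coupling $(X_t, Y_t)_{t \ge 0}$ started from any $X_0 = x$, $Y_0 = y$ by applying this one-step construction at each step conditionally on the current pair (and using the identity coupling once $X_t = Y_t$, so the two chains coalesce upon meeting). The Markov property and the display above give $\mathbb{E}[\rho(X_{t+1}, Y_{t+1}) \mid X_t, Y_t] \le \kappa\, \rho(X_t, Y_t)$, hence $\mathbb{E}[\rho(X_t, Y_t)] \le \kappa^t \rho(x,y) \le \kappa^t\, \mathrm{diam}(\Omega)$ by induction. Since every edge has length at least $1$, any two distinct states are at $\rho$-distance at least $1$, so Markov's inequality gives $\mathbb{P}(X_t \ne Y_t) = \mathbb{P}(\rho(X_t, Y_t) \ge 1) \le \kappa^t\, \mathrm{diam}(\Omega)$. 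Finally, because the chains stay together after meeting, the first meeting time $T_{x,y}$ satisfies $\{T_{x,y} > t\} = \{X_t \ne Y_t\}$, and the coupling inequality gives $d(t) \le \max_{x,y} \mathbb{P}(T_{x,y} > t) \le \mathrm{diam}(\Omega)\, \kappa^t$, as claimed.

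The step I expect to require the most care is the splicing construction: one must check that concatenating the edge-couplings along a geodesic really produces a joint distribution whose first and last coordinates carry the prescribed one-step laws $P(z,\cdot)$ and $P(w,\cdot)$ — that is, that the sequential sampling is well defined and preserves all the intermediate marginals. Once that is in place, the inductive iteration, the application of Markov's inequality, and the invocation of the coupling inequality are all routine.
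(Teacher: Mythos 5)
Your proof is correct. The paper does not actually prove Theorem \ref{pathcoupling} — it states it and cites Bubley--Dyer \cite{BubleyDyer} and Chapter 14 of \cite{YuvalBook} — and your argument (splicing the edge couplings along a geodesic to upgrade the contraction from $E'$ to all pairs, iterating via the Markov property, then Markov's inequality with $\rho \geq 1$ on distinct states and the coupling inequality) is precisely the standard proof from those references, so there is nothing to compare beyond noting the match.
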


Returning to the random walk under consideration, define neighboring pairs of partitions to be precisely the pairs which are one step away in the split-merge random walk. Then, define a coupled process $(X_t, Y_t)$ such that $X_0 = \sigma$ and $Y_0 = \tau$, making sure that the distance between $X_t$ and $Y_t$ at each step is at most $1$. Here are some useful definitions. 

\begin{defn}\label{rhodefn}
For $\sigma$ and $\tau$ partitions of $n$, define $\rho(\sigma, \tau)$ to be the distance between $\sigma$ and $\tau$ induced by the split-merge random walk; that is, $\rho(\sigma, \tau)$ is the number of split-merge steps it takes to get from $\sigma$ to $\tau$. 
\end{defn}

The next definition is useful for finding a lower bound on the probability of coupling at each step given the current location of the two walks. 

\begin{defn}\label{smdefn} 
Let $\sigma$ and $\tau$ be partitions of $n$ such that $\rho(\sigma, \tau) = 1$. Then $\sigma$ and $\tau$ are exactly one merge away, so rearranging parts appropriately and without loss of generality letting $\sigma$ be the partition with more parts, 
\begin{equation}\label{sigmataucanonical}
\begin{split} 
\sigma &= (a_1, a_2, \dots, a_m, b, c) \\
\tau  &= (a_1, a_2, \dots, a_m, b+ c) 
\end{split} 
\end{equation}
where $b \leq c$. Then, define 
\begin{equation}\label{smequation}
s(\tau, \sigma)= s(\sigma, \tau) = b \ \text{ and }\ m(\tau, \sigma) =  m(\sigma, \tau) = c
\end{equation}
That is, since $\sigma$ and $\tau$ differ in the parts $b, c$ and $b+c$, $s(\sigma, \tau)$ is the smallest part in which they differ, and $m(\sigma, \tau)$ is the medium part in which they differ. 

For later use, define $m(\sigma, \sigma) = n$ and $s(\sigma, \sigma)= \frac{n}{2}$. 

\end{defn}

In the next section, the coupling is given along with the following lemma: 

\begin{lemma}
\label{schrammcouplinglowerbound}
Assume that $(X_t, Y_t)= (\sigma, \tau)$, for $\sigma$ and $\tau$ such that $\rho(\sigma, \tau) = 1$. Then, $\rho(X_{t+1}, Y_{t+1}) \leq 1$, and 
\begin{equation*}
\prob (X_{t+1} = Y_{t+1}) \geq \frac{4 s(\sigma, \tau)}{n^2}
\end{equation*}
That is, the chain stays at most distance $1$ apart, and gives the above lower bound for the probability of coupling. 
\end{lemma}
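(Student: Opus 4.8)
The plan is to build an explicit one-step coupling and then read off both assertions, the distance bound being essentially automatic and the probability bound a short count followed by an arithmetic estimate. Write $\sigma$ and $\tau$ in the canonical form of Definition~\ref{smdefn}: $\sigma = (a_1,\dots,a_m,b,c)$, $\tau=(a_1,\dots,a_m,b+c)$ with $b\le c$, so $s(\sigma,\tau)=b$. First I would record a faithful ``labelled'' description of a single split--merge step: represent a partition by $n$ tokens grouped into blocks, one block per part, each block carrying a cyclic order; a step draws an ordered pair $(R,L)\in[n]^2$ uniformly and, exactly as a random transposition acts on the cycles of a permutation, either stays put (if $R=L$), splits the block containing $R$ and $L$ at those two tokens, or merges the two blocks containing them. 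A direct count over the $n^2$ ordered pairs recovers precisely the probabilities of Definition~\ref{splitmergedesc}. I would realize $\sigma$ with a block $P$ of size $b$ and a block $Q$ of size $c$ on a fixed token set, and realize $\tau$ with the \emph{same} blocks for $a_1,\dots,a_m$ but with the single block $P\cup Q$, whose cyclic order is obtained by splicing the order on $P$ with the order on $Q$.

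The base coupling runs both chains with the same pair $(R,L)$ --- equivalently, it applies the same transposition to two permutations, of cycle types $\sigma$ and $\tau$, that differ by one transposition; this is also what would later let one track $s$ and $m$ in Schramm's style. I would then go through the short list of cases for where $R$ and $L$ fall --- both in a common block $A_i$; in two distinct common blocks; one in some $A_i$ and one in $P$ or $Q$; both in $P$; both in $Q$; one in $P$ and one in $Q$ --- and check that in each case the two output partitions differ by exactly one merge, i.e. $\rho(X_{t+1},Y_{t+1})=1$: the chains do the ``same'' thing to whatever common parts are touched, and the parts where $\sigma$ and $\tau$ disagree stay one merge apart. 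Two cells of this base coupling matter for the next step: first, $\{X_{t+1}=\sigma,\ Y_{t+1}=\tau\}$ is exactly the common stay event $\{R=L\}$, of probability $1/n$; second, when $R\in P$ and $L\in Q$ (or vice versa) the $\sigma$-chain merges $P$ and $Q$ and lands at $\tau$ while the $\tau$-chain splits $P\cup Q$, and counting the pairs for which that split separates $P$ from $Q$ again gives $\prob(X_{t+1}=\tau,\ Y_{t+1}=\sigma)=4b/n^2$ when $b<c$, degrading to $2b/n^2$ when $b=c$.

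To turn positive mass on $(\sigma,\tau)$ and $(\tau,\sigma)$ into actual agreement, I would apply one marginal-preserving mass transfer: move $\varepsilon:=\min\{1/n,\,4b/n^2\}$ (respectively $\min\{1/n,\,2b/n^2\}$ when $b=c$) of probability off the cells $(\sigma,\tau)$ and $(\tau,\sigma)$ and onto the diagonal cells $(\sigma,\sigma)$ and $(\tau,\tau)$. One checks this is legitimate: the cells $(\sigma,\tau)$ and $(\tau,\sigma)$ have mass at least $\varepsilon$, and each $X$-marginal value $\sigma,\tau$ and each $Y$-marginal value $\sigma,\tau$ loses $\varepsilon$ in one cell and regains it in another. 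Since the surgery only relocates mass onto diagonal cells, still $\rho(X_{t+1},Y_{t+1})\le 1$ everywhere, and now $\prob(X_{t+1}=Y_{t+1})\ge 2\varepsilon$. Finally $2\varepsilon\ge 4b/n^2$ in every case, using $b\le c$ and $b+c\le n$: if $b<c$ then $2\min\{1/n,4b/n^2\}\ge 4b/n^2$ because $1/n\ge 2b/n^2$ (as $2b\le n$) and $4b/n^2\ge 2b/n^2$; if $b=c$ then $2b/n^2\le 1/n$, so $2\min\{1/n,2b/n^2\}=4b/n^2$. Hence $\prob(X_{t+1}=Y_{t+1})\ge 4b/n^2 = 4s(\sigma,\tau)/n^2$.

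The routine ingredients are the faithfulness of the labelled picture and the case-by-case verification that the base coupling sits at distance exactly $1$; neither hides a real difficulty. The one load-bearing computation is the count $\prob(X_{t+1}=\tau,\ Y_{t+1}=\sigma)=4b/n^2$ together with its drop to $2b/n^2$ at $b=c$, and since the target bound $4s(\sigma,\tau)/n^2$ is exactly tight in that degenerate case, that is the step where I expect to have to be careful --- the bookkeeping of which pairs $(R,L)$ send the split of $P\cup Q$ back to $\{b,c\}$, and the factor-of-two it loses when $b=c$. Everything downstream of that count is arithmetic.
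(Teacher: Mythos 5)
Your argument is correct, and it is verifiably so: the token picture is a faithful lift of Definition \ref{splitmergedesc}, the same-$(R,L)$ coupling does keep the partitions exactly one merge apart in every case, the off-diagonal cell masses are as you claim (in the spliced cycle the $P$-tokens and $Q$-tokens form complementary arcs, so the ordered pairs $R\in P$, $L\in Q$ with directed distance $b$ or $c$ number $2b$ when $b<c$ and $b$ when $b=c$, giving $4b/n^2$ resp.\ $2b/n^2$ after adding the reversed pairs), and the transfer of $\varepsilon=\min\{1/n,4b/n^2\}$ (resp.\ $2b/n^2$) onto the diagonal preserves both marginals and yields $2\varepsilon\ge 4b/n^2$ via $2b\le n$. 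But this is a genuinely different route from the paper's. The paper never leaves the partition level: its coupling (Definition \ref{schrammcouplingdefn}) is specified move-by-move, and the proof of the lemma is an accounting of the leftover probability $p$ of splitting $b+c$ into $\{b,c\}$ in $\tau$ (split into the cases $c<2b$ and $c\ge 2b$), showing that each of the two meeting routes --- stay in $\sigma$ paired with the $\{b,c\}$ split of $b+c$, and merge of $b,c$ in $\sigma$ paired with stay in $\tau$ --- occurs with probability $\min(p,1/n)\ge 2b/n^2$, summing to $4b/n^2$. Your lift-and-surgery construction buys a cleaner and exactly sharp count (your $b=c$ case shows the constant $4$ cannot be improved by this route), whereas the paper's construction buys something you should be aware of: the lemma as stated refers to \emph{the} coupling of Definition \ref{schrammcouplingdefn}, and the rest of the paper (Lemmas \ref{mshrinklemma}--\ref{sgrowthlemma} and the growth arguments in Propositions \ref{rootnlemma} and \ref{sbiglemma}) analyzes that specific coupling's case structure. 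Since you construct a different coupling with the same two properties, your proof suffices for the coupling bound itself, but if it were substituted into the paper the downstream technical lemmas controlling $s(X_t,Y_t)$ and $m(X_t,Y_t)$ would have to be re-verified for your post-surgery coupling rather than quoted as is.
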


After proving the above lemma, it is shown below that after order $n$ steps, $s(X_t, Y_t)$ is on average of order $n$. The lemma then implies that the probability of coupling at each step is of order $\frac{1}{n}$, which will show that there is a high probability of coupling after order $n$ steps. Using the fact that the diameter of the set of partitions is no greater than $n$, Theorem \ref{pathcoupling} shows that the random transposition walk mixes in $O(n \log n)$ time.

\section{The Coupling} 
This section defines the coupling for neighboring pairs for the split-merge random walk, and proves Lemma \ref{schrammcouplinglowerbound}. The coupling is defined in such a way that the distance between $X_t$ and $Y_t$ at each step is at most $1$ for all $t$. As usual, once the two chains meet, they are run together. 

\begin{defn}\label{schrammcouplingdefn}
Consider the next step $(X_1, Y_1)$ of a coupling which is currently at $(X_0, Y_0) = (\sigma, \tau)$, where $\rho(\sigma, \tau) = 1$ and 
\begin{align*}
\sigma &= (a_1, a_2, \dots, a_m, b, c)\\
\tau &= (a_1, a_2, \dots, a_m, b+c)
\end{align*}
 where $b\leq c$. There are a number of cases, considered in the following order: go through the possible moves in $\sigma$, then provide corresponding moves in $\tau$. 

\begin{itemize}
\item {\bf Operations only using the $a_i$:} If $a_i$ and $a_j$ are merged in $\sigma$ for any $i$ and $j$, perform the same operation in $\tau$. Similarly, if $a_i$ is split in $\sigma$ into $\{r, a_i-r\}$, do the same for $a_i$ in $\tau$. Then, 
\begin{align*}
X_1 &= (a_1', \dots, a_k', b, c)\\
Y_1 &= (a_1', \dots,  a_k', b+c) 
\end{align*}
for the appropriate $\{a_1', a_2', \dots, a_k'\}$.

\item {\bf Merging $b$ or $c$ and $a_i$:} If $b$ and $a_i$ are merged in $\sigma$, merge $b+c$ and $a_i$ in $\tau$. If $c$ and $a_i$ are merged in $\sigma$, also merge $b+c$ and $a_i$ in $\tau$. In the first case,  
\begin{align*}
X_1 &= (a_1', \dots, a_{m-1}', b+a_i, c)\\
Y_1 &= (a_1', \dots,  a_{m-1}', b+c+a_i) 
\end{align*}
where  $\{a_1', a_2', \dots, a_{m-1}'\} = \{a_1, a_2, \dots, a_m\}/\{a_i\}$. The case where $c$ and $a_i$ are merged in $\sigma$ is analogous. 

\item {\bf Splitting $b$ or $c$:} If $b$ is split in $\sigma$ into $\{r, b-r\}$ where $r \leq \frac{b}{2}$, then split $b+c$ in $\tau$ into $\{r, b+c-r\}$. Similarly, if $c$ is split in $\sigma$ into $\{r, c-r\}$ where $r \leq \frac{c}{2}$, then split $b+c$ in $\tau$ into $\{r, b+c-r\}$. The first case results in 
\begin{align*}
X_1 &= (a_1, \dots, a_m, r, b-r, c)\\
Y_1 &= (a_1, \dots,  a_m,r, b+c-r) 
\end{align*}
The second case, where $c$ is split into $\{r, c-r\}$, is analogous. 

\item {\bf Staying in place:} If the walk stays in place in $\sigma$, it is coupled with either staying in place in $\tau$ or with splitting $b+c$ in $\tau$ into $\{b, c\}$. Since splitting $b+c$ into $\{b, c\}$ may have already been coupled with splitting $c$ into $\{b, c-b\}$, let $p$ be the remaining probability of splitting $b+c$ into $\{b, c\}$. Then, couple staying in place in $\sigma$ with splitting $b+c$ into $\{b, c\}$ in $\tau$ with probability $\min\left(p, \frac{1}{n}\right)$. This results in
\begin{align*}
X_1 &= (a_1, \dots, a_m,  b, c)\\
Y_1 &= (a_1, \dots,  a_m, b, c) 
\end{align*}
That is, the chains will couple. 

Couple staying in place in $\sigma$ to staying in place in $\tau$ the rest of the time -- that is, with probability $\frac{1}{n} - \min\left(p, \frac{1}{n}\right)$. 

\item {\bf Merging $b$ and $c$:} Couple merging $b$ and $c$ in $\sigma$ to any remaining possibilities in $\tau$. It is easy to check that these are either staying in place or splitting $b+c$ into $\{r, b+c-r\}$. The first leads to the chains coupling; the second leads to 
\begin{align*}
X_1 &= (a_1, \dots, a_m,  b+ c)\\
Y_1 &= (a_1, \dots,  a_m, r, b+ c-r) 
\end{align*}
for some $r$. 

\end{itemize}
\end{defn}  

\begin{example}
As this coupling looks fairly complicated, here are a couple of examples. The possible pairs for $(X_1, Y_1)$ are listed, as well as the probability of each pair. 
 
\begin{enumerate}
\item 
Let $(X_0, Y_0) = (\sigma, \tau) = ((2,3),(5))$. Here, there are no $a_i$, $b= 2$, $c=3$, and $b+c = 5$. A description is provided for each pair of moves: the first move corresponds to $\sigma$, the second to $\tau$. 
\begin{equation*}
(X_1, Y_1) :
\begin{cases} 
((1,1,3),(1,4)), p = \frac{2}{25} &\text{split $2$ as $\{1,1\}$, split $5$ as $\{1,4\}$}\\
((1,2,2),(1,4)), p = \frac{6}{25} &\text{split $3$ as $\{1,2\}$, split $5$ as $\{1, 4\}$}\\
((2,3), (2,3)), p = \frac{5}{25} & \text{stay at $\sigma$, split $5$ as $\{2, 3\}$} \\
((5),(1,4)), p = \frac{2}{25} &\text{merge $2$ and $3$, split $5$ as $\{1,4\}$}\\
((5), (2,3)), p = \frac{5}{25} &\text{merge $2$ and $3$, split $5$ as $\{2,3\}$}\\
((5), (5)), p = \frac{5}{25} &\text{merge  $2$ and $3$, stay at $\tau$}
\end{cases}
\end{equation*}

\item Let $(X_0, Y_0) = (\sigma, \tau) = ((2, 1,3), (2,4))$, written with the above convention that the parts $\sigma$ and $\tau$ disagree on are written last. Here, $a_1 = 2$, $b=1$, $c=3$, and $b+c  = 4$, and the first move again corresponds to $\sigma$, while the second corresponds to $\tau$. 
\begin{equation*}
(X_1, Y_1) :
\begin{cases} 
((1,1,1,3),(1,1,4)), p = \frac{2}{36} &\text{split $2$ as $\{1, 1\}$ in both}\\
((3,3),(6)), p = \frac{4}{36} &\text{merge $2$  and $1$, merge $2$ and $4$}\\
((1,5), (6)), p = \frac{12}{36} & \text{merge $2$ and $3$, merge $2$ and $4$}\\
((2,1,1,2),(2,1,3)), p = \frac{6}{36} &\text{split $3$ as $\{1, 2\}$, $4$ as $\{1, 3\}$}\\
((2,1,3),(2,1,3)), p = \frac{2}{36} &\text{stay at $\sigma$, split $4$ as $\{1, 3\}$}\\
((2,4),(2,2,2)), p = \frac{4}{36} &\text{merge $1$ and $3$, split $4$ as $\{2, 2\}$}\\
((2,4),(2,4)), p = \frac{2}{36} &\text{merge $1$ and $3$, stay at $\tau$}\\
((2,1,3),(2,4)), p = \frac{4}{36} &\text{stay at $\sigma$ and $\tau$}\\
\end{cases}
\end{equation*}
\end{enumerate}

\end{example}

Going back to the general case, here is a check that the above definition provides the correct marginal distribution for $Y_1$. Note that given the way that the coupling was defined, it clearly provides the correct distribution for $X_1$. 

\begin{lemma}
The coupling in Definition \ref{schrammcouplingdefn} has the correct marginal distribution for $Y_1$. 
\end{lemma}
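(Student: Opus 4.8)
The plan is to check, transition by transition, that summing the coupling weights of all moves in $\sigma$ that Definition~\ref{schrammcouplingdefn} routes to a given move in $\tau$ reproduces the split-merge law of Definition~\ref{splitmergedesc} started from $\tau$. Because the definition attaches to every possible move in $\sigma$ — each carrying its correct split-merge probability — exactly one move in $\tau$, the induced law of $Y_1$ is automatically a probability measure; the content is only that it is the right one, so it suffices to check all but one $\tau$-transition and invoke conservation of total mass. First I would dispatch the easy transitions out of $\tau$. A merge of two of the $a_i$, or a split of some $a_i$, occurs in $Y_1$ exactly when the identical operation occurs in $\sigma$ (first bullet), so those weights agree verbatim. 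A merge of $a_i$ with $b+c$ occurs in $Y_1$ exactly when $a_i$ is merged with $b$ or with $c$ in $\sigma$ (second bullet), and $\tfrac{2a_ib}{n^2}+\tfrac{2a_ic}{n^2}=\tfrac{2a_i(b+c)}{n^2}$ is the correct merge weight.

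The real work is the split of $b+c$ into an unordered pair $\{r,b+c-r\}$. I would first identify the $\sigma$-moves that can land on it: splitting $b$ into $\{r,b-r\}$ when $r\le b/2$, splitting $c$ into $\{r,c-r\}$ when $r\le c/2$ — in both cases $b+c-r\ge r$, so the small piece of the resulting $\tau$-split is genuinely $r$ — together with the two ``slack'' moves, namely staying in place in $\sigma$ (routed only to the split $\{b,c\}$) and merging $b$ and $c$ in $\sigma$ (distributed over whatever residual is left). The crux is then a bookkeeping claim: for every target split $\{r,b+c-r\}$ the weight already supplied by the explicit $b$- and $c$-splits never exceeds that split's split-merge probability, so each residual is nonnegative; and the residuals, summed over all splits of $b+c$ and over staying in place in $\tau$, total exactly $\tfrac{2bc}{n^2}$, which is the probability of merging $b$ and $c$ in $\sigma$. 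Under the case analysis this last equality is just the telescoping cancellation $(b+c)(b+c-1)-b(b-1)-c(c-1)=2bc$ (together with the matching computation of the total split-merge mass out of a part of size $b+c$). Once this is in hand the distributions described in the last two bullets of Definition~\ref{schrammcouplingdefn} are well defined, and they force every split weight, and the stay-in-place weight $\tfrac1n$, to come out right.

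I expect the main obstacle to be keeping the case analysis straight and reading the ``remaining probability'' phrasing of Definition~\ref{schrammcouplingdefn} so that it is manifestly self-consistent, rather than any deep inequality. One has to treat separately: even parts, whose half-split carries weight $\tfrac{b}{n^2}$ or $\tfrac{c}{n^2}$ rather than the generic $\tfrac{2b}{n^2}$ or $\tfrac{2c}{n^2}$, so that a target split fed by such a half-split keeps a strictly positive residual; the case $b=c$, in which $\{b,c\}$ is itself the half-split of $b+c$; and the bookkeeping around $\{b,c\}$ itself, where the weight $p$ remaining for it after the $c$-split may exceed $\tfrac1n$, so that staying in place in $\sigma$ absorbs only $\min\!\left(p,\tfrac1n\right)$ of it and the remainder $p-\min\!\left(p,\tfrac1n\right)$ is carried by the $b$--$c$ merge, while the unused $\tfrac1n-\min\!\left(p,\tfrac1n\right)$ of the stay-in-place mass goes to staying in place in $\tau$. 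Verifying that every residual stays nonnegative and that they sum to the merge mass $\tfrac{2bc}{n^2}$ (a point where the constraint $\sum_i a_i+b+c=n$, and hence $c<n$ and $2b\le c$ in the ranges where it is invoked, is actually used) pins down the coupling; after that the remaining verification is routine arithmetic.
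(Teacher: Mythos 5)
Your proposal is correct and follows essentially the same route as the paper: verify the marginal of $Y_1$ transition by transition, noting that the $a_i$-operations match verbatim, that $\frac{2a_ib}{n^2}+\frac{2a_ic}{n^2}=\frac{2a_i(b+c)}{n^2}$ handles merges with $b+c$, and that each split of $b+c$ has enough probability to absorb the $b$- and $c$-splits routed to it. Your additional telescoping identity $(b+c)(b+c-1)-b(b-1)-c(c-1)=2bc$ just makes explicit the exact mass balance that the paper leaves implicit in the ``remaining probability'' phrasing of Definition \ref{schrammcouplingdefn}.
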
 

\begin{proof}[\bf Proof:]

Since $\sigma$ and $\tau$ share the parts $a_i$, the operations only using the $a_i$ are distributed identically in both and hence pose no problem. Furthermore, 
\begin{equation*}
\begin{split}
\prob(\text{Merge $b$ and $a_i$ in $\sigma$}) + \prob(\text{Merge $c$ and $a_i$ in $\sigma$}) &= \frac{2ba_i}{n^2} + \frac{2ca_i}{n^2} = \frac{2(b+c)a_i}{n^2} \\
                    &= \prob(\text{Merge $b+c$ and $a_i$ in $\tau$})
\end{split}
\end{equation*} 
Thus, all the operations involving any $a_i$ work properly. 

Consider next operations that only involve $b$ and $c$ in $\sigma$. Splitting $b$ into $\{r, b-r\}$ where $r\leq \frac{b}{2}$ in $\sigma$ is coupled with splitting $b+c$ into $\{r, b+c-r\}$ in $\tau$, and similarly for $c$. It needs to be checked that this is possible -- that is, that the probability of splitting $b+c$ into $\{r, b+c-r\}$ in $\tau$ is sufficiently large to accommodate all these moves in $\sigma$. 

There are a number of possibilities. First of all, if $r \leq \frac{b}{2}$, then clearly $r < \frac{b+c}{2}$, and hence according to Definition \ref{splitmergedesc}, 
\begin{align*}
\prob(\text{Split $b+c$ into $\{r, b+c-r\}$ in $\tau$}) &= \frac{2(b+c)}{n^2} = \frac{2b}{n^2} + \frac{2c}{n^2} \\ 
&\geq \prob(\text{Split $b$ into $\{r, b-r\}$ in $\sigma$})  +\\
&\hspace{30 pt}  \prob(\text{Split $c$ into $\{r, c-r\}$ in $\sigma$}) 
\end{align*} 
In this case, the probability of splitting $b+c$ into $\{r, b+c-r\}$ in $\tau$ is sufficiently large. 

x

Now, if $\frac{b}{2} < r \leq \frac{c}{2}$, the procedure couples splitting $b+c$ into $\{r, b+c-r\}$ with splitting $c$ into $\{r, c-r\}$. Thus, since in this case $r$ is still less than  $\frac{b+c}{2}$,   
\begin{align*}
\prob(\text{Split $b+c$ into $\{r, b+c-r\}$ in $\tau$}) &= \frac{2(b+c)}{n^2} \geq \frac{2c}{n^2} \\ 
&\geq \prob(\text{Split $c$ into $\{r, c-r\}$ in $\sigma$}) 
\end{align*} 
which again works. 

Finally, if $r > \frac{c}{2}$, splitting $b+c$ into $\{r, b+c-r\}$ is not coupled to splitting either $b$ or $c$ in $\sigma$, which obviously does not pose a problem. None of the other moves considered in Definition \ref{schrammcouplingdefn} could be an issue, and hence the marginal distribution of $Y_1$ under this definition is correct. 
\end{proof}

The next step proves Lemma \ref{schrammcouplinglowerbound}. This states that the coupled chains stay at most one step apart, and that 
\begin{equation*}
\prob (X_{t+1} = Y_{t+1}) \geq \frac{4 s(X_t, Y_t)}{n^2}
\end{equation*}

\begin{proof}[\bf Proof of Lemma \ref{schrammcouplinglowerbound}:] 
It should be clear from Definition \ref{schrammcouplingdefn} that the coupling stays at most one step apart for all $t$. To show that if $(X_t, Y_t) = (\sigma, \tau)$, where $\rho(\sigma, \tau) = 1$, then 
\begin{equation*}
\prob (X_{t+1} = Y_{t+1}) \geq \frac{4 s(\sigma, \tau)}{n^2}
\end{equation*} 
let 
\begin{align*}
\sigma &= (a_1, \dots, a_m, b, c) \\
\tau &= (a_1, \dots, a_m, b+c) 
\end{align*} 
where $b \leq c$. Then by Definition \ref{smdefn}, $s(\sigma, \tau) = b$. 

From Definition \ref{schrammcouplingdefn}, the chains can couple either if $\sigma$ stays in place, or if $b$ and $c$ are merged in $\sigma$. Consider those two cases separately. 

\paragraph {\bf Staying in place in $\sigma$:} 
The chains will couple if $\sigma$ stays in place and $b+c$ is split in $\tau$ into $\{b, c\}$. As noted in the definition, these are coupled together with probability $\min\left(p, \frac{1}{n}\right)$, where $p$ is the remaining probability of splitting $b+c$ into $\{b, c\}$ in $\tau$ -- the probability that this split hasn't already been coupled to something else. To find a lower bound on $p$, first note that splitting $b+c$ into $\{b, c\}$ in $\tau$ couldn't have been coupled with any splits of $b$ in $\sigma$. However, it might have been coupled with a split of $c$ in $\sigma$. Consider two cases: $c < 2b$ and $c \geq 2b$. 

If $c < 2b$, then splitting $c$ into $\{c-b, b\}$ in $\sigma$ is coupled to splitting $b+c$ into $\{c-b, 2b\}$ in $\tau$ since $c - b < b$. This means that nothing is coupled to splitting $b+c$ into $\{b, c\}$, and therefore 
\begin{equation}\label{csmall} 
p = \prob(\text{Splitting $b+c$ into $\{b, c\}$ in $\tau$}) \geq \frac{b+c}{n^2} \geq \frac{2b}{n^2}
\end{equation}

If $c \geq 2b$, then splitting $b+c$ into $\{b, c\}$ in $\tau$ is indeed coupled with splitting $c$ into $\{b, c-b\}$ in $\sigma$. In this case, clearly $b \neq c$, and hence  
\begin{equation*} 
\prob(\text{Splitting $b+c$ into $\{b, c\}$ in $\tau$}) = \frac{2(b+c)}{n^2} 
\end{equation*}
Therefore, 
\begin{align}\label{clarge}
p &\geq \prob(\text{Splitting $b+c$ into $\{b, c\}$ in $\tau$}) - \prob(\text{Splitting $c$ into $\{b, c-b\}$ in $\sigma$})\nonumber \\
  & \geq  \frac{2(b+c)}{n^2} - \frac{2c}{n^2} = \frac{2b}{n^2} 
\end{align} 
Equations \eqref{csmall} and \eqref{clarge} give $p \geq \frac{2b}{n^2}$. Furthermore, note that $b\leq c$, and $b+c \leq n$, and hence  $b \leq \frac{n}{2}$. Therefore, 
\begin{equation}\label{minplowerbound}
\min\left(p, \frac{1}{n} \right) \geq \min\left(\frac{2b}{n^2}, \frac{1}{n} \right) \geq \frac{2b}{n^2} 
\end{equation} 
Hence, 
\begin{equation}\label{lowerboundstayinplace}
\prob(\text{Coupling if staying in place in $\sigma$}) = \min\left( p, \frac{1}{n}\right) \geq \frac{2b}{n^2} 
\end{equation}

\paragraph {\bf Merging $b$ and $c$ in $\sigma$:} Next, consider the probability of coupling if $b$ and $c$ are merged in $\sigma$. Clearly, this would need to be coupled with staying in place in $\tau$. The only other thing that staying in place in $\tau$ could have been coupled with so far is staying in place in $\sigma$. As noted in Definition \ref{schrammcouplingdefn}, 
\begin{equation*}
\prob(\text{Both $\sigma$ and $\tau$ stay in place}) = \frac{1}{n}- \min\left(p, \frac{1}{n}\right)
\end{equation*}
for the same $p$ used above. Thus,  
\begin{equation}\label{lowerboundmergingbandc}
\begin{split}
\prob(\text{Coupling if merging $b$ and $c$ in $\sigma$}) &= \prob(\text{$b$ and $c$ merged in $\sigma$, $\tau$ stayed}) \\
 &= \frac{1}{n} - \prob(\text{Both $\sigma$ and $\tau$ stayed}) \\
&= \min\left(p, \frac{1}{n}\right) \geq \frac{2b}{n^2}
\end{split}
\end{equation}
using Equation \eqref{minplowerbound} above. 

Finally, combining Equations \eqref{lowerboundstayinplace} and \eqref{lowerboundmergingbandc},
\begin{equation*}
\prob(X_{t+1} = Y_{t+1}) \geq \frac{4b}{n^2} = \frac{4s(\sigma, \tau)}{n^2} = \frac{4s(X_t, Y_t)}{n^2}
\end{equation*} 
as required. 
\end{proof}

Continuing with the proof, as sketched out earlier, the rest of this paper will be concerned with showing that $s(X_t, Y_t)$ is of order $n$ after $O(n)$ time. The next section shows how that proves Theorem \ref{schrammcouplingtheorem}, and provides a summary of the proof. 

\section{Proof of Main Theorem Using $\expect \left[ s(X_t, Y_t) \right]$ } 

As described above, one of the main tools of this paper is the following theorem: 

\begin{theorem} \label{schrammsislargeonaverage}
There exist constants $\alpha$ and $\beta$ such that for all $t \geq \alpha n$, 
\begin{equation*}
\expect \left[s(X_t, Y_t)\right] \geq \beta n 
\end{equation*}
\end{theorem}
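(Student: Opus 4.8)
The plan is to reduce Theorem \ref{schrammsislargeonaverage} to two structural facts about the "disagreement record'' of the coupled pair, and then to establish those facts by importing Schramm's emergence-of-macroscopic-cycles phenomenon \cite{SchrammLargeCycles}. It is convenient to lift the split-merge coupling of Definition \ref{schrammcouplingdefn} to a coupling of random transposition walks $(\bar X_t,\bar Y_t)$ with $\bar Y_t = \bar X_t\,(x_t\,y_t)$ whenever the chains have not yet met; then, writing $s_t := s(X_t,Y_t)$ and $m_t := m(X_t,Y_t)$ with $s_t \le m_t$, the transposition $(x_t\,y_t)$ cuts the cycle $L_t := s_t + m_t$ of $\bar Y_t$ containing both marked points into the two cycles of $\bar X_t$ of lengths $s_t$ and $m_t$. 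In this language $m_t$ is (a lower bound for) the length of a distinguished cycle, and it suffices to show, for $t \ge \alpha n$, that with probability bounded below either the chains have met or both (a) $L_t = \Omega(n)$ and (b) the cut is balanced, $s_t = \Omega(L_t)$.

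First I would carry out the growth phase for $m$. Using the growth estimate (Lemma \ref{mgrowthlemma}) together with the shrink estimates (Lemmas \ref{mshrinklemma} and \ref{schrammshrinklemma}) one gets a favorable drift for $m_t$ while it is small; Proposition \ref{rootnlemma} then pushes the distinguished cycle past order $\sqrt{n}$, and a bootstrap of the type in Proposition \ref{sbiglemma} carries it from order $\sqrt{n}$ up to linear size. This step is where Schramm's result — that macroscopic cycles appear after time $\tfrac{n}{2}$ — is essential: up to the coupling bookkeeping the distinguished cycle of $\bar Y_t$ is a cycle of an ordinary random transposition walk, so following the graph-process viewpoint of \cite{SchrammLargeCycles, BerestyckiZeitouniSchramm} (transpositions as edges of an Erd\H{o}s--R\'enyi-type random graph, giant component transferred to the cycle structure) one concludes $m_t = \Omega(n)$ with probability bounded below after a linear number of steps, provided one checks that the coupling never shrinks the relevant cycle faster than it is built.

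Second, conditionally on $m_t = \Omega(n)$ at some time $t_0 \le \alpha n$, I would show the cut becomes balanced. The mechanism is the coupling move in which $X$ merges its two distinguished parts while $Y$ re-splits the corresponding part of size $L \ge \Omega(n)$ into $\{r,\,L - r\}$: the new value of $s$ is then $\min(r,\,L-r)$, and since each split of a part of size $L$ carries probability of order $L/n^2$, the per-step probability that this move fires with $r$ a constant fraction of $L$ is of order $\Omega(1/n)$, hence it occurs with probability $\Omega(1)$ over the $\Theta(n)$ steps before time $\alpha n$; when it fires, $s$ jumps to a constant fraction of $m$. Together with the growth estimate (Lemma \ref{sgrowthlemma}) and shrink estimate (Lemma \ref{sshrinklemma}) controlling the fluctuations of $s$ thereafter, this yields a constant $c>0$ and $\beta'>0$ with $\prob(s_t \ge \beta' n) \ge c$ for all $t \ge \alpha n$. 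Since $s_t = n/2$ once the chains have met (the convention in Definition \ref{smdefn}) and $s_t \ge 1$ always, one obtains $\expect[s_t] \ge c\,\beta'\,n$, which is the theorem with $\beta = c\beta'$; feeding this into Lemma \ref{schrammcouplinglowerbound} and Theorem \ref{pathcoupling} then gives Theorem \ref{schrammcouplingtheorem}.

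The main obstacle is the second paragraph: the shrink lemmas only control a single step, so proving that $m_t$ genuinely reaches size $\Omega(n)$ requires a multi-step argument mirroring Schramm's analysis of when a macroscopic cycle appears — in particular getting past the $\sqrt{n}$ barrier — and one must run this argument for the \emph{coupled} pair rather than for a plain random transposition walk, since the transposition applied in $\bar Y_t$ is not in general the one applied in $\bar X_t$. Keeping the correspondence between the special cases of Definition \ref{schrammcouplingdefn} (the re-split moves, the stay-in-place moves) and the edge-addition picture correctly bookkept, and in particular ruling out that these special moves can repeatedly shrink the distinguished part, is the delicate part; everything after a macroscopic $m_t$ is in hand is comparatively soft, relying only on the per-step estimates and on a linear number of nearly-independent chances each succeeding with constant probability.
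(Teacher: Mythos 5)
Your phase-one plan (grow the larger disagreement part using Schramm plus the shrink/growth lemmas and a multi-scale bootstrap) is essentially the paper's route to Lemma \ref{mrootnlemma} and Lemma \ref{mbiglemma}, with one caveat: Schramm's theorem is not applied to the ``distinguished cycle of $\bar Y_t$'' as if it were a cycle of a plain random transposition walk (the disagreement part is defined through the coupling, and the two chains do not apply the same transposition); in the paper Schramm's result is used only to guarantee that $\left|V_t(n/k)\right| \geq n/2$, and the disagreement part itself is grown through the coupling-specific Lemmas \ref{mshrinklemma} and \ref{mgrowthlemma} inside the multi-scale scheme. That is repairable, since you cite exactly those lemmas. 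The genuine gap is in your phase two. Your balanced-resplit mechanism is correctly identified and its per-step rate is indeed $\Omega(1/n)$ once $m_t = \Omega(n)$ (the splits of $b+c$ at points $r$ with $\min(r,b+c-r)$ a constant fraction of $b+c$ carry leftover probability totalling $\Omega(m_t/n^2)$ and are necessarily coupled to the merge of $b$ and $c$ in $\sigma$). But turning ``rate $\Omega(1/n)$ per step'' into the theorem's statement, a lower bound on $\expect\left[s(X_t,Y_t)\right]$ at every fixed $t \geq \alpha n$, needs two things you do not supply: (i) a \emph{simultaneous} bound, with probability bounded below, that $m_u = \Omega(n)$ for all $u$ in a window of length $\Theta(n)$ preceding $t$ (the cited results only give bounds at a single time, or along the multi-scale schedule); and (ii) after the resplit fires at a \emph{random} time in that window, a proof that $s$ is still $\Omega(n)$ at the deterministic time $t$, possibly $\Theta(n)$ steps later. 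Lemma \ref{sshrinklemma} at macroscopic scale $x = \beta' n$ only gives a per-step shrink bound of constant order, so a union bound over a linear window is vacuous, and a one-step entry/exit recursion fails because the entry rate $\Theta(1/n)$ is dwarfed by the exit bound; ``controlling the fluctuations thereafter'' is precisely the hard part.

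The paper's proof is engineered to avoid any linear-time maintenance of a macroscopic quantity: Proposition \ref{rootnlemma} gives, at \emph{every} fixed $t \geq 9n$ (via a recursion on $\prob(A_t)$, not a hitting-time argument), probability at least $1/2$ that $s \geq n^{1/3}$ and $\left|V_t(n^{1/3})\right| \geq n/2$; Proposition \ref{sbiglemma} then grows $s$ from $n^{1/3}$ to $\epsilon\delta n$ through the dyadic scales with total time $\tau_K = O(n^{2/3}\log n) = o(n)$, using the failed-pair bookkeeping so that the cumulative shrink probability $\sum_r a_r 2^{2r}/n^2$ telescopes; and because $\tau_K = o(n)$ and the starting time is arbitrary, this window can be anchored to end exactly at the target time $t$, yielding $\prob\{s(X_t,Y_t) \geq \epsilon\delta n\} \geq 1/4$ and hence the expectation bound. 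If you want to keep your balanced-resplit shortcut, you would still need an argument of this multi-scale type (or some other stationarity-style control) to show that a macroscopic $s$, once created, is present at the fixed time $t$ with probability bounded below; as written, the proposal does not close that loop.
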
 
This section uses the above result to prove Theorem \ref{schrammcouplingtheorem}. To start, prove the following easy lemma:

\begin{lemma} \label{expectationcouplinginequality}
Let $(X_t, Y_t)$ be defined as in Definition \ref{schrammcouplingdefn}, where as usual $\rho(X_0, Y_0)$ is equal to $1$. Let $\alpha$ and $\beta$ be the constants in Theorem \ref{schrammsislargeonaverage} above. Then, 
\begin{equation*}
\prob\left(X_{\alpha n   + \frac{n}{2}} = Y_{ \alpha n  + \frac{n}{2}} \right) \geq \beta
\end{equation*}
\end{lemma}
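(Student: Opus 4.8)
The plan is to leverage Lemma~\ref{schrammcouplinglowerbound} together with Theorem~\ref{schrammsislargeonaverage}. The key observation is that by Lemma~\ref{schrammcouplinglowerbound}, conditioned on the event that the chains have not yet coupled by time $t$, the probability of coupling at step $t+1$ is at least $\frac{4s(X_t,Y_t)}{n^2}$ (if the chains have already coupled, they stay together, so this bound is trivially true in that case too, interpreting $s(\sigma,\sigma)=\frac{n}{2}$ as in Definition~\ref{smdefn}). Summing the coupling probabilities over the window of time $t \in \{\alpha n, \alpha n + 1, \dots, \alpha n + \frac{n}{2} - 1\}$ should give a lower bound on $\prob(X_{\alpha n + n/2} = Y_{\alpha n + n/2})$ of order $\frac{n}{2} \cdot \frac{1}{n} = $ constant.

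Here are the steps in order. First I would note that since once the chains meet they stay together, $\prob(X_{\alpha n + n/2} = Y_{\alpha n + n/2}) \geq \prob(\exists\, t \in [\alpha n, \alpha n + n/2) : X_{t+1} = Y_{t+1})$. Second, I would write $A$ for this latter event and bound $\prob(A^c)$; letting $A_t$ be the event that the chains first couple at step $t+1$, we have $\prob(A^c) = \prod$ over the window of the conditional non-coupling probabilities, but it is cleaner to use the standard trick: condition step by step and use that the conditional probability of coupling at step $t+1$ given the history up to time $t$ and given non-coupling so far is at least $\frac{4 s(X_t, Y_t)}{n^2}$. Third, I would take expectations. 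Using $\expect[s(X_t,Y_t)] \geq \beta n$ for $t \geq \alpha n$ from Theorem~\ref{schrammsislargeonaverage}, the expected one-step coupling probability at each such step is at least $\frac{4\beta n}{n^2} = \frac{4\beta}{n}$, and summing over the $\frac{n}{2}$ steps in the window heuristically gives total probability at least $2\beta$, comfortably more than $\beta$.

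The one subtlety — and the step I expect to need the most care — is making the summation rigorous, since the bound $\frac{4s(X_t,Y_t)}{n^2}$ holds conditionally on not having coupled yet, so I cannot simply add up unconditional expectations. The clean way is: let $\tau$ be the coupling time, and observe that $\prob(\tau > t+1 \mid \mathcal{F}_t) \leq 1 - \frac{4 s(X_t,Y_t)}{n^2} \leq \exp\!\left(-\frac{4s(X_t,Y_t)}{n^2}\right)$ on $\{\tau > t\}$ — or, even more simply, bound $\prob(\tau > \alpha n + n/2)$ directly. Actually the simplest rigorous route avoids products: define $N = \sum_{t=\alpha n}^{\alpha n + n/2 - 1} \mathbbm{1}[X_{t+1} = Y_{t+1}, X_t \neq Y_t]$, the indicator that coupling occurs in the window (which is $0$ or $1$), so $\prob(A) = \expect[N] = \sum_t \prob(\tau = t+1) $. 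Then $\prob(A) \geq \sum_t \expect\!\left[\mathbbm{1}[\tau > t]\cdot \frac{4s(X_t,Y_t)}{n^2}\right]$. On the event $\{\tau > t\}$ we use the Lemma, and on $\{\tau \le t\}$... hmm, this still entangles things. The genuinely clean approach: if $\prob(\tau > \alpha n + n/2) > 1-\beta$, I want a contradiction; alternatively just show $\prob(A) \geq \beta$ directly. I would handle it by the inequality $\prob(\tau \le \alpha n + n/2) \geq \sum_{t=\alpha n}^{\alpha n + n/2 -1} \prob(\tau = t+1)$ and, for each term, $\prob(\tau = t+1) = \expect\big[\mathbbm{1}[\tau>t]\,\prob(X_{t+1}=Y_{t+1}\mid \mathcal F_t)\big] \geq \expect\big[\mathbbm{1}[\tau>t]\,\tfrac{4 s(X_t,Y_t)}{n^2}\big]$. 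Since $s(X_t, Y_t) \le n$ always (indeed $\le n/2$ when $\tau > t$, and $=n/2$ when $\tau \le t$), one gets $\expect[\mathbbm{1}[\tau>t]\, s(X_t,Y_t)] = \expect[s(X_t,Y_t)] - \expect[\mathbbm{1}[\tau\le t]\, s(X_t,Y_t)] = \expect[s(X_t,Y_t)] - \tfrac n2 \prob(\tau \le t) \ge \beta n - \tfrac n2 \prob(\tau \le \alpha n + n/2)$. Summing over the $n/2$ values of $t$ and writing $q := \prob(\tau \le \alpha n + n/2)$ yields $q \geq \tfrac{n}{2}\cdot\tfrac{4}{n^2}\big(\beta n - \tfrac n2 q\big) = 2\beta - q$, hence $q \geq \beta$, which is exactly the claim. (If necessary one can shrink the window or adjust $\beta$; a factor of $2$ of slack is available, so this should go through without fuss.)
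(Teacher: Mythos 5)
Your proposal is correct and is essentially the paper's own argument: both rest on the same decomposition $\expect[\mathbbm{1}[X_t\neq Y_t]\,s(X_t,Y_t)] = \expect[s(X_t,Y_t)] - \tfrac{n}{2}\,\prob(X_t=Y_t) \geq \beta n - \tfrac{n}{2}\,\prob(X_t=Y_t)$ (using the convention $s(\sigma,\sigma)=\tfrac n2$), combined with the one-step bound of Lemma \ref{schrammcouplinglowerbound} and a sum over the window of length $\tfrac n2$. The only cosmetic difference is that the paper assumes WLOG that $\prob(X_t=Y_t)\leq\beta$ throughout the window (using monotonicity) and telescopes the increments, whereas you bound $\prob(\tau\le t)$ by its value at the end of the window and solve the resulting inequality $q \geq 2\beta - q$.
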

\begin{proof}[\bf Proof:]
Since by Lemma \ref{schrammcouplinglowerbound}, $\prob(X_t  = Y_t)$ is non-decreasing, if $\prob(X_t = Y_t) \geq \beta$ for any $t \leq \alpha n + \frac{n}{2}$, the argument is complete. Thus, assume that 
\begin{equation}\label{assumptionplowerbound} 
\prob(X_t = Y_t) \leq \beta
\end{equation}
for all $t \leq \alpha n + \frac{n}{2}$. 

Clearly, 
\begin{equation*}
\prob(X_{t+1} = Y_{t+1}) = \prob(X_t = Y_t) + \prob(X_{t+1} = Y_{t+1} \left| \right. X_t \neq Y_t) \prob(X_t \neq Y_t)
\end{equation*}
Rearranging, and using Lemma \ref{schrammcouplinglowerbound}, 
\begin{equation}\label{increaselowerbound} 
\begin{split}
\prob(X_{t+1} = Y_{t+1}) - \prob(X_t = Y_t) &\geq  \expect\left[ \frac{4s(X_t, Y_t)}{n^2} \Big| X_t \neq Y_t\right] \prob(X_t \neq Y_t)\nonumber \\
&= \frac{4}{n^2}\expect\left[ s(X_t, Y_t) \Big| X_t \neq Y_t\right] \prob(X_t \neq Y_t)
\end{split}
\end{equation}
A lower bound is now needed for the right-hand side. Assume that $t\geq \alpha n$, and hence that $\expect\left[s(X_t, Y_t) \right] \geq \beta n$ by Lemma \ref{schrammsislargeonaverage}. Then, 
\begin{align*}
\expect\left[ s(X_t, Y_t) \Big| X_t \neq Y_t\right] \prob(X_t \neq Y_t) &= \expect\left[ s(X_t, Y_t)\right] - \\
& \hspace{30 pt} \expect \left[ s(X_t, Y_t) \Big| X_t = Y_t \right] \prob(X_t = Y_t) \\
&\geq \beta n - \frac{n}{2} \prob \left(X_t = Y_t \right)
\end{align*}
since if $X_t = Y_t$, $s(X_t, Y_t) = \frac{n}{2}$. Furthermore, using Equation \eqref{assumptionplowerbound}, 
\begin{equation*}
\expect\left[ s(X_t, Y_t) \Big| X_t \neq Y_t\right] \prob(X_t \neq Y_t) \geq \frac{\beta n }{2} 
\end{equation*}
Combining this with Equation \eqref{increaselowerbound}, 
\begin{equation*}
\prob(X_{t+1} = Y_{t+1}) - \prob(X_t = Y_t) \geq \frac{ 2\beta  }{n}
\end{equation*} 
for all $\alpha n \leq t \leq \alpha n + \frac{n}{2}$. Adding up these inequalities for all $t$ in $[\alpha n , \alpha n + \frac{n}{2} ]$,
\begin{equation*}
\prob\left(X_{\alpha n + \frac{n}{2}} = Y_{\alpha n  +\frac{n}{2}}\right) \geq \beta
\end{equation*} 
as required. 
\end{proof} 

For path coupling, a lemma about the diameter of $P_n$ under the split-merge random walk is needed.  

\begin{lemma}\label{diametersetofpartitions} 
The diameter of $P_n$ under the split-merge random walk is at most $n$. 
\end{lemma}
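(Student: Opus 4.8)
The plan is to bound $\rho(\sigma,\tau)$ for every pair of partitions by routing through one of the two extreme partitions, the singleton $(n)$ or the all-ones partition $(1,1,\dots,1)$, and then choosing whichever route is shorter. The basic observation is that a split increases the number of parts by exactly one and a merge decreases it by exactly one. Hence if $\sigma$ has $m$ parts, repeatedly merging two parts reaches $(n)$ in exactly $m-1$ steps, so $\rho(\sigma,(n))\le m-1$; and repeatedly splitting off a part equal to $1$ reaches $(1,1,\dots,1)$ in exactly $n-m$ steps, so $\rho(\sigma,(1^n))\le n-m$. Symmetric statements hold for reaching a target partition $\tau$ with $m'$ parts: one can go from $(n)$ to $\tau$ by peeling off the parts of $\tau$ one at a time (split $n$ as $\{a_1,n-a_1\}$, then split $n-a_1$ as $\{a_2, n-a_1-a_2\}$, and so on), using $m'-1$ splits, and one can go from $(1^n)$ to $\tau$ by grouping ones, using $n-m'$ merges.

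Granting this, and noting that $\rho$ is a genuine metric (a merge step from $\sigma$ to $\tau$ is a split step from $\tau$ to $\sigma$, so $\rho$ is symmetric and satisfies the triangle inequality), I would combine the above through the two intermediate points to get
\begin{equation*}
\rho(\sigma,\tau)\le \rho(\sigma,(n))+\rho((n),\tau)\le (m-1)+(m'-1)=m+m'-2
\end{equation*}
and
\begin{equation*}
\rho(\sigma,\tau)\le \rho(\sigma,(1^n))+\rho((1^n),\tau)\le (n-m)+(n-m')=2n-m-m'.
\end{equation*}
Finally I would split into cases on the size of $m+m'$: if $m+m'\le n+2$ the first bound gives $\rho(\sigma,\tau)\le n$, while if $m+m'\ge n+2$ the second gives $\rho(\sigma,\tau)\le 2n-(n+2)=n-2\le n$. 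In either case $\rho(\sigma,\tau)\le n$, and taking the maximum over all $\sigma,\tau\in P_n$ yields $\mathrm{diam}(P_n)\le n$.

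There is no serious obstacle here; the argument is essentially a bookkeeping exercise. The only points requiring a moment's care are verifying that the peeling and grouping constructions are legitimate moves of the split-merge walk (immediate, since any split $\{r,a_i-r\}$ with $1\le r\le a_i-1$ and any merge are allowed), that each of them achieves exactly the stated number of steps (immediate from the part-count observation), and that the choice between the two routes is made correctly. In fact this argument gives the slightly sharper bound $\mathrm{diam}(P_n)\le n-1$, which is more than enough for the application to Theorem \ref{pathcoupling}.
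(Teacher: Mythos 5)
Your proof is correct, but it follows a genuinely different route from the paper. The paper argues by induction on $n$: it matches the largest part of $\tau$ by (at most) one split of the largest part of $\sigma$, and then recurses on the remaining partition of $n - b_1$, which yields $\rho(\sigma,\tau)\leq n$ directly. You instead route every pair through one of the two extreme partitions, $(n)$ or $(1,1,\dots,1)$, and count steps purely by the observation that each split raises and each merge lowers the number of parts by exactly one; combining the two routes and taking the better one gives $\rho(\sigma,\tau)\leq \min(m+m'-2,\,2n-m-m')\leq n-1$. All the moves you invoke are legitimate split-merge steps, and since you construct explicit directed paths in both legs (merges into $(n)$, splits out of it, and symmetrically through $(1^n)$), you do not even need to appeal to reversibility of the moves. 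Your approach buys a slightly sharper constant ($n-1$ rather than $n$) and an explicit non-inductive path construction, while the paper's induction is shorter to state and generalizes the "match one part, recurse" idea; either bound is equally adequate for the application, since the diameter only enters Theorem \ref{pathcoupling} through a logarithm.
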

\begin{proof}[\bf Proof:]
Proceed by induction on $n$. This statement is clearly true for $n= 1$. Now, assume it's true for all $m \leq n-1$, and show it for $n$. Let $\sigma = (a_1, \dots, a_k)$ and $\tau = (b_1, \dots, b_l)$ be two partitions of $n$. Without loss of generality, assume that $a_1 \geq b_1$. 

If $a_1 = b_1$, create a path from $\sigma$ to $\tau$ by just changing the parts $(a_2, \dots, a_k)$ to $(b_2, \dots, b_l)$. Since $(a_2, \dots, a_k)$ is a partition of $n - a_1$, by the inductive hypothesis, 
\begin{equation*}
\rho(\sigma, \tau) \leq n - a_1 \leq n -1. 
\end{equation*} 
so this case follows. 

Otherwise, $a_1 > b_1$. Let $\sigma_1$ be $\sigma$ with $a_1$ split into $(b_1, a_1 - b_1)$. Then, $\sigma_1$ and $\tau$ match on the part $b_1$, and hence by the argument above, 
\begin{equation*}
\rho(\sigma_1, \tau) \leq n - 1
\end{equation*}
Since $\sigma$ is a neighbor of $\sigma_1$, this implies that $\rho(\sigma, \tau) \leq n$, completing the proof. 
\end{proof}

Theorem \ref{schrammcouplingtheorem} is now proved using path coupling. It shows an $O(n \log n)$ bound on the split-merge random walk, and hence on the random transposition walk. 

\begin{proof} [\bf Proof of Theorem  \ref{schrammcouplingtheorem}]
Let $t_1 = \alpha n + \frac{n}{2}$. Consider the walk $(\tilde{X}_k)_{k\geq 1}$, where each step consists of making $t_1$ steps of the split-merge random walk. Let $(\tilde{X}_k, \tilde{Y}_k)$ be the coupling on this new walk induced by the current coupling $(X_t, Y_t)$. Now, Proposition \ref{expectationcouplinginequality} shows that if $(\tilde{X}_0, \tilde{Y}_0) = (\sigma, \tau)$, where $\rho(\sigma, \tau) = 1$, then 
\begin{align*}
\expect \left[ \rho(\tilde{X}_1, \tilde{Y}_1)\right]  &= \expect \left[ \rho(X_{t_1}, Y_{t_1}) \right] = \prob(X_{t_1} \neq Y_{t_1}) \\
&\leq (1 - \beta) \rho(\sigma, \tau)
\end{align*}
using the fact that $\rho(X_t, Y_t)$ is always either $0$ or $1$. 
Therefore, if $\tilde{d}(k)$ is defined to be the distance from stationarity of $(\tilde{X}_k, \tilde{Y}_k)$, then from Theorem \ref{pathcoupling},  
\begin{equation*}
\tilde{d}(k) \leq \textnormal{diam}(P_n) \left( 1 - \beta \right)^k 
\end{equation*}
Since neighboring pairs are pairs that are one step apart in the split-merge random walk, Proposition \ref{diametersetofpartitions} implies that $\textnormal{diam}(P_n) \leq n$. Also using the fact that $1 - x \leq e^{-x}$, 
\begin{equation*}
\tilde{d}(k)  \leq n e^{-\beta k} 
\end{equation*}
Thus, if $k = \frac{\log n }{2\beta}$, then $\tilde{d}(k) \leq e^{-2} <\frac{1}{4}$. But it's clear from the definition of the new walk that 
\begin{equation*}
d(kt_1) = \tilde{d}(k) 
\end{equation*}
Thus, 
\begin{equation*}
d\left(\left( \frac{\alpha}{2\beta} + \frac{1}{4\beta}\right) n \log n \right)  = d(kt_1) < \frac{1}{4}
\end{equation*} 
which means that the walk has mixed by time $\left( \frac{\alpha}{2\beta} + \frac{1}{4\beta}\right) n \log n$, completing the proof.  
\end{proof}

\section{Proving $\expect \left[ s(X_t, Y_t) \right]$ is large}
Let us now summarize the rest of the proof. The remainder of this paper will be devoted to proving Theorem \ref{schrammsislargeonaverage}, which states that after $O(n)$ time, the expected value of $s(X_t, Y_t)$ is of order $n$. 

The proof will be structured as follows: it is shown that in $O(n)$ time, $s(X_t, Y_t)$ will have a high probability of being at least order $n^{1/3}$. Then it is shown that it takes another $o(n)$ time for $s(X_t, Y_t)$ to have a high probability of being of order $n$. This will clearly suffice to show that that after $O(n)$ time, $\expect \left[ s(X_t, Y_t) \right]$ is of order $n$. Section \ref{orderrootnsection} below will be concerned with growing $s(X_t, Y_t)$ to order $n^{1/3}$, while Section \ref{ordernsection} will be concerned with growing it to order $n$. 

Before stating the theorems and sketching their proofs, a number of useful definitions are needed. Note that some of these definitions are asymmetrical: they are defined in terms of $\bar{X}_t$ and not $\bar{Y}_t$. This is an arbitrary choice; since the pair $(X_t, Y_t)$ is only a step apart, it doesn't make any difference.

\begin{defn} \label{Vtdefn}
For $v \in \{1, 2, \dots, n\}$, define $C_t(v)$ to be the cycle of $\bar{X}_t$ containing $v$. Furthermore, for a number $x$, define 
\begin{equation*}
V_t(x) = \{ v \in \{1, 2, \dots, n\} \left| \right. \left|C_t(v) \right| \geq x \}
\end{equation*} 
Thus, $V_t(x)$ is the union of all cycles of size at least $x$. 
\end{defn}
\begin{rmk} \label{Vtremark} 
Note that if $X_t = (a_1, a_2, \dots, a_m)$, then
\begin{equation*}
\left| V_t(x) \right| = \sum_{a_i \geq x} a_i
\end{equation*} 
Thus, the size of $V_t(x)$ is a function of $X_t$. 
\end{rmk}

The first proposition that grows $s(X_t, Y_t)$ to order $n^{1/3}$ is now stated. 

\begin{prop}\label{rootnlemma}
Let $(X_t, Y_t)$ be the usual coupling started at $(X_0, Y_0) = (\sigma, \tau)$, where $\rho(\sigma, \tau) \leq 1$. Then, for $n$ sufficiently large and $t\geq 9n$, 
\begin{equation*}
\prob \left\{s(X_t, Y_t) \geq n^{1/3}, \left| V_t\left(n^{1/3}\right)\right| \geq \frac{n}{2}\right\} \geq \frac{1}{2}
\end{equation*} 
\end{prop}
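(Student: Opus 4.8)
The plan is to analyze the random transposition walk $\bar{X}_t$ directly and track the emergence of a macroscopic cycle, borrowing Schramm's giant-component heuristic for the transposition random graph. The key observation is that $s(X_t, Y_t)$ is the smaller of the two parts in which the neighboring pair $\sigma, \tau$ disagree; since $\rho(\sigma,\tau)\le 1$, the pair $(X_t, Y_t)$ is driven by a single coupled transposition walk, and I expect $s(X_t, Y_t)$ to be at least the size of the second-largest cycle of $\bar{X}_t$ that is ``involved'' in the discrepancy, or more robustly, once $\bar{X}_t$ has a giant cycle on at least $n/2$ of the vertices and also a second cycle of size $\ge n^{1/3}$, the discrepancy between $X_t$ and $Y_t$ is forced to live inside large parts. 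So the strategy reduces to two probabilistic facts about $\bar{X}_t$ alone: (i) by time $9n$, with probability $\ge 1/2$ the set $V_t(n^{1/3})$ of vertices in cycles of size $\ge n^{1/3}$ has size $\ge n/2$; and (ii) conditioned on that, $s(X_t, Y_t) \ge n^{1/3}$ with the remaining probability to spare.

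First I would set up the random-graph coupling: identify the transpositions applied up to time $t$ with edges of an Erd\H{o}s--R\'enyi-type multigraph $G_t$ on $n$ vertices with roughly $t$ edges (each step adds one edge, a self-loop with probability $1/n$). The cycles of $\bar{X}_t$ are refinements of the connected components of $G_t$, but Schramm's key lemma is that once components are large, the induced permutation restricted to a large component is close to uniform, so a large component of size $\gg 1$ carries a cycle of comparable size with high probability. With $t = 9n$ edges, we are well past the $n/2$ threshold, so standard giant-component estimates give a component of size $(1-o(1))\theta n$ for the survival probability $\theta$ close to $1$ (in fact with $9n$ edges, $\theta > 1/2$ comfortably), and moreover with high probability \emph{all but $o(n)$} vertices lie in this giant component. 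I would then invoke (a quantitative form of) Schramm's result to conclude that the giant component's cycle structure has a largest cycle of size $\ge n/2$, giving $|V_t(n^{1/3})| \ge n/2$, and also — because after the giant absorbs most vertices there is still substantial ``stirring'' — a second-largest cycle of size at least $n^{1/3}$.

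The second step is to translate cycle structure of $\bar{X}_t$ into a lower bound on $s(X_t, Y_t)$. Here I would use that $Y_t = \mathrm{Cyc}(\bar{Y}_t)$ for the coupled partner, and the coupling from Definition \ref{schrammcouplingdefn} keeps the discrepancy to a single pair of parts $\{b, c\}$ vs. $\{b+c\}$ with $b = s(X_t, Y_t)$. The point is that whichever single transposition created or maintained the discrepancy, both disagreement parts $b$ and $c$ are cycles of $\bar{X}_t$ (or subsets thereof) that got merged or split, and once most of the mass is in large cycles, a discrepancy part of size $< n^{1/3}$ would have to be a small cycle disjoint from the giant — but by the time bound, such small cycles carry only $o(n)$ of the mass, and a union bound / counting argument shows the discrepancy part is, with the required probability, one of the large cycles. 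I would make this precise by conditioning on the event from step one and bounding the probability that the discrepancy part is small.

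\textbf{Main obstacle.} The hard part will be the quantitative version of Schramm's lemma: controlling how fast the permutation restricted to the giant component equilibrates, so that a \emph{second} cycle of size $\ge n^{1/3}$ is present by time $9n$ (rather than just the largest), and doing so with an explicit enough probability bound to beat $1/2$. The giant-component part is classical, but the interplay between the graph process and the permutation process — that the cycle sizes, not just component sizes, become macroscopic — is exactly the delicate content of ``Compositions of Random Transpositions,'' and adapting it to give the joint statement about $s(X_t,Y_t)$ and $|V_t(n^{1/3})|$ is where the real work lies. I anticipate the proof will instead proceed more modestly: first establish a weak lower bound (that $s(X_t,Y_t)$ exceeds a slowly growing function such as $\log n$, then bootstrap via the coupling's own dynamics using Lemmas \ref{mshrinklemma}--\ref{sgrowthlemma}), which is presumably why the paper separates the ``grow to $n^{1/3}$'' and ``grow to $n$'' phases into Sections \ref{orderrootnsection} and \ref{ordernsection}.
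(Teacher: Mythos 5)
Your step one (showing $\left|V_t(n^{1/3})\right| \geq n/2$ with probability close to $1$ by time $O(n)$) is essentially the paper's Lemma \ref{fraclemma}, which invokes Schramm's Theorem \ref{schrammpapertheorem} and the Erd\H{o}s--R\'enyi giant component estimate; that part is fine. The genuine gap is in your step two, where you assert that once most of the mass sits in large cycles, ``the discrepancy between $X_t$ and $Y_t$ is forced to live inside large parts,'' and that a counting or union-bound argument over the small-cycle mass shows the discrepancy part is one of the large cycles. There is no such forcing: the pair $b, c$ versus $b+c$ in which $X_t$ and $Y_t$ disagree is a single specific pair of parts determined by the coupling history, and nothing prevents it from being, say, $\{1,2\}$ versus $\{3\}$ even when $\bar{X}_t$ has a cycle covering $n/2$ vertices. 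Your union-bound idea implicitly treats the discrepancy part as if it were chosen with probability proportional to its size among the cycles of $\bar{X}_t$, but that size-biased uniformity is not available — it is precisely the statement that needs to be proved, and it is false at, e.g., $t$ slightly larger than $n$ when the cycle structure is already macroscopic but the discrepancy may still be tiny.

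What the paper actually does (and what your closing paragraph correctly anticipates but does not execute) is a dynamical, recursive argument: conditioned on $\left|V_t(n^{1/3})\right| \geq n/2$, Lemma \ref{mgrowthlemma} gives probability roughly $1/n$ per step that the medium discrepancy part $m(X_t,Y_t)$ jumps above $n^{1/3}$ by merging with a large cycle, while Lemma \ref{mshrinklemma} bounds the chance of falling back below $n^{1/3}$ by $O(n^{-4/3})$; iterating the resulting one-step inequality for $\prob\{m(X_t,Y_t)\geq n^{1/3}\}$ over $O(n)$ steps yields probability $\geq 4/5$ by time $5n$ (Lemma \ref{mrootnlemma}). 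Only then can the same recursion be run for $s(X_t,Y_t)$ using Lemmas \ref{sshrinklemma} and \ref{sgrowthlemma} — the growth lemma for $s$ requires $m$ already large, since $s \leq m$ caps the growth of $s$ — which is why the conclusion needs $t \geq 9n$ rather than $t \geq n$, and why the extra $O(n)$ of time beyond the emergence of large cycles is not optional. Your proposal, as its main line of argument, replaces this dynamical growth of the discrepancy by a static inference from the cycle structure of $\bar{X}_t$ alone, and that step would fail.
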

\begin{rmk}
Here, the choice of $n^{1/3}$ is in some sense arbitrary -- any $n^{\alpha}$, where $\alpha < \frac{1}{2}$, would have done just as well. 
\end{rmk}

A few other definitions which are needed for the statement of the theorem growing $s(X_t, Y_t)$ from order $n^{1/3}$ to order $n$. Indeed, a more general theorem is proved. Fix constants $\epsilon$ and $\delta$: then, if $s(X_t, Y_t)$ starts by being of size $2^{j+1}$ (where $j$ can be a function of $n$), after a certain amount of time $q$, $s(X_{t+q}, Y_{t+q})$ has a high probability of being least $\epsilon \delta n$. The following definition introduces some notation necessary for stating the theorem; it currently looks completely inexplicable, but will be justified in Section \ref{ordernsection}. 

\begin{defn}\label{Kandtaudef}
Assume $\epsilon$ and $\delta$ are fixed constants, and $j$ is a number (possibly a function of $n$). Then, define 
\begin{equation}\label{Kdef}
K = \lceil \log_2(\epsilon \delta n) \rceil 
\end{equation}
Furthermore, for $r$ between $j$ and $K$ define 
\begin{equation}\label{taudef}
a_r = \lceil 2 \delta^{-1} 2^{-r} n (\log_2 n - r)\rceil \ \text{ and }\ 
\tau_r = \sum_{i=j}^{r-1} a_i
\end{equation}
where as usual, $\lceil \cdot \rceil$ stands for the ceiling function. 
\end{defn}

The following proposition proves that $s(X_t, Y_t)$ grows to order $n$. 

\begin{prop}\label{sbiglemma}
Let $(X_t, Y_t)$ be the usual coupling started at $(X_0, Y_0) = (\sigma, \tau)$, where $\rho(\sigma, \tau) \leq 1$. Let $j$ be a number and let $\delta \in (0,1]$ be a constant such that $\left|V_0(2^{j+1})\right| \geq \delta n$ and $s(\sigma, \tau) \geq 2^{j+1}$. If $K$ and $\tau_K$ are defined as in Definition \ref{Kandtaudef} and $\epsilon \in (0, 1/32)$, then
\begin{equation}\label{provesgrow} 
\prob\{s(X_{\tau_K}, Y_{\tau_K}) < \epsilon \delta n\} \leq O(1) \delta^{-1} \epsilon \left| \log(\epsilon \delta)\right| 
\end{equation} 
where the constant implied in the $O(1)$ notation is universal.
\end{prop}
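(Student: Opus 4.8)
The plan is to track $s(X_t, Y_t)$ through a sequence of doubling phases, indexed by $r = j, j+1, \dots, K$, where in phase $r$ we try to push $s(X_t, Y_t)$ from (at least) $2^{r+1}$ up to (at least) $2^{r+2}$. The time allotted to phase $r$ is $a_r$, so that phase $r$ runs over $[\tau_r, \tau_{r+1})$, consistent with Definition \ref{Kandtaudef}. The key mechanism is already isolated in the coupling: recall from Lemma \ref{schrammcouplinglowerbound} that $\rho(X_t,Y_t)$ stays in $\{0,1\}$, and $s$ changes only when the merge/split operations touch the distinguishing parts $b \le c$. Since $\bar X_t$ restricted to conjugacy classes is exactly the split-merge walk, I would interpret $s(X_t,Y_t)$ via the cycle structure of $\bar X_t$: the distinguishing pair corresponds to a split of some cycle, and $s$ grows precisely when that cycle — or the cycle $s$ currently sits in — merges with another large cycle. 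This is where the set $V_t(x)$ of Definition \ref{Vtdefn} enters: if $|V_t(2^{r+1})| \ge \delta n$ (roughly), then at each step there is probability of order $\frac{2^{r+1} \cdot \delta n}{n^2} = \frac{\delta 2^{r+1}}{n}$ that $s$ merges with another part of $V_t(2^{r+1})$, which at least doubles it. Over $a_r \approx 2\delta^{-1} 2^{-r} n (\log_2 n - r)$ steps, the expected number of such merges is of order $(\log_2 n - r)$, so the probability that phase $r$ fails to double $s$ is of order $n^{-(\log_2 n - r)/C}$ — superpolynomially small when $r$ is well below $\log_2 n$, and still controlled near the top.

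The steps, in order, would be: (1) Set up the phase decomposition and a "good event" $G_r$ on which $s(X_{\tau_{r+1}}, Y_{\tau_{r+1}}) \ge 2^{r+2}$ and $|V_{\tau_{r+1}}(2^{r+2})| \ge \delta n$ (the second clause is needed to feed the hypothesis of the next phase). (2) Show that the set $V_t(2^{r+1})$ does not shrink much during phase $r$ — i.e. large cycles stay large for $o(n)$ time — which is exactly the content of the shrink/growth lemmas \ref{mshrinklemma}, \ref{mgrowthlemma}, \ref{sshrinklemma}, \ref{sgrowthlemma} referenced in the preamble; I would invoke these as black boxes to guarantee $|V_t(2^{r+1})| \ge \frac{\delta}{2} n$ throughout phase $r$ with high probability. (3) Condition on $s(X_t,Y_t)$ having size in $[2^{r+1}, 2^{r+2})$ and bound below, by $\frac{c\, \delta 2^{r+1}}{n}$, the per-step probability of a doubling merge; since $s$ is non-decreasing in the relevant sense until it doubles, a geometric / Markov-chain argument bounds $\prob(\text{phase } r \text{ fails})$ by $(1 - \frac{c\delta 2^{r+1}}{n})^{a_r} \le \exp(-c'(\log_2 n - r))= n^{-c'(\log_2 n - r)}$ up to constants. (4) Union-bound over $r = j, \dots, K-1$:
\begin{equation*}
\prob\{s(X_{\tau_K}, Y_{\tau_K}) < \epsilon\delta n\} \le \sum_{r=j}^{K-1} \prob(\text{phase } r \text{ fails}) \le \sum_{r=j}^{K-1} n^{-c'(\log_2 n - r)} + (\text{error from } V_t).
\end{equation*}
(5) Evaluate the sum: since $K = \lceil \log_2(\epsilon\delta n)\rceil$, the exponent $\log_2 n - r$ is at least $\log_2 n - K \approx |\log_2(\epsilon\delta)|$, so the dominant term is of order $n^{-|\log_2(\epsilon\delta)|} \cdot (\text{polynomial correction})$; massaging this (and the $V_t$ error terms) into the stated bound $O(1)\,\delta^{-1}\epsilon|\log(\epsilon\delta)|$ is the bookkeeping at the end.

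The main obstacle is step (3): making rigorous the claim that $s(X_t, Y_t)$ "doubles when it merges with another large cycle." The subtlety is that the distinguishing pair $(b,c)$ in the coupling is not simply "a cycle of $\bar X_t$" — it is the discrepancy between $\bar X_t$ and the partition $Y_t$, and as the coupling evolves this discrepancy can move around (the split can be re-coupled to various operations, per Definition \ref{schrammcouplingdefn}). One must verify that whenever $X_t \ne Y_t$, the distinguishing smaller part $s(X_t,Y_t)$ really does sit inside, or adjacent to, a genuine cycle of $\bar X_t$ of comparable size, so that a merge in the underlying transposition walk involving that cycle and another $V_t(2^{r+1})$-cycle forces $s$ to grow by at least the size of the absorbed cycle. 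Equivalently: one needs an invariant of the form "$s(X_t,Y_t) \ge$ (size of the cycle of $\bar X_t$ containing the distinguished element), unless the chains have already met." Establishing this invariant and checking it is preserved by every branch of the coupling in Definition \ref{schrammcouplingdefn} — together with confirming that the relevant merge probability is genuinely $\ge \frac{c\,\delta 2^{r+1}}{n}$ and not killed by the re-coupling bookkeeping — is the technical heart of the argument; everything else is concentration and summation.
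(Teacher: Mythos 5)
Your skeleton --- doubling phases $I_r$ with the schedule $a_r$, a per-phase growth estimate, and a union bound over $r$ --- matches the paper's proof, but the three ingredients you wave through are exactly where the content lies, and as written they are genuine gaps. First, Lemma \ref{sgrowthlemma} only applies when $x+y \leq c = m(X_t,Y_t)$: to push $s$ past $2^r$ you must already know that $m(X_t,Y_t) \geq 2^{r}$ (indeed $2^{r+1}$ on the next phase) throughout the relevant interval. Your proposal never controls $m$ at all; the paper first proves the analogous statement for $m$ (Lemma \ref{mbiglemma}) and carries the event ``$m$ too small'' as a separate failure mode, which contributes one of the dominant terms of order $\epsilon\delta^{-1}\left|\log(\epsilon\delta)\right|$. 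Second, your step (2) attributes the persistence of $\left|V_t(2^{r+1})\right| \geq \delta n/2$ to Lemmas \ref{mshrinklemma}--\ref{sgrowthlemma}; those are one-step statements about $m$ and $s$ and say nothing about $V_t$. Keeping the large-cycle mass above $\delta n/2$ for the whole schedule is the Schramm-style vertex-failure argument (Lemma \ref{Schrammlemma}), with its self-referential Markov-inequality bound on the ``not enough large parts'' event, and it is the other dominant source of the $\delta^{-1}\epsilon\left|\log(\epsilon\delta)\right|$ bound --- it cannot be invoked as a corollary of the one-step lemmas. Third, your claim that ``$s$ is non-decreasing in the relevant sense until it doubles'' is false: $s$ can shrink at every step (a split of $b$ or $c$, or a merge of $b$ and $c$ coupled to an unfavorable split of $b+c$), and bounding the cumulative downward fluctuation via Lemma \ref{sshrinklemma} ($\leq 4x^2/n^2$ per step, summed against the schedule $a_r$) produces another main term of order $\epsilon\left|\log(\epsilon\delta)\right|$; without it the geometric argument in your step (3) does not close.

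Your quantitative accounting is also off in a way that signals the structure has been misread: with $a_r \approx 2\delta^{-1}2^{-r} n(\log_2 n - r)$ and per-step doubling probability of order $\delta 2^{r}/n$, the phase-$r$ failure probability is of order $2^{-(\log_2 n - r)} = 2^{r}/n$, not $n^{-c'(\log_2 n - r)}$ and not superpolynomially small; summing over $r \leq K$ gives order $2^K/n \approx \epsilon\delta$, which in the paper is a minor contribution --- the stated bound comes from the shrinkage, the $V_t$ control, and the $m$ control, not from the growth-failure sum you treat as dominant. Finally, the obstacle you single out as the technical heart --- whether $s(X_t,Y_t)$ sits in a genuine cycle of $\bar X_t$ so that a merge forces growth --- is already resolved inside Lemma \ref{sgrowthlemma}: the distinguishing parts $b,c$ are parts of whichever of the two partitions has more parts, the lemma treats both orientations, and the possible mismatch costs only the subtraction of $b+c \leq 2(x+y)$ from $\left|V_0(y)\right|$, which is why its bound reads $2b(R-3x-3y)/n^2$. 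So the missing work is not where you placed it; it is the control of $m$, of $V_t$, and of the downward moves of $s$ described above.
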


\begin{proof}[\bf Proof of Theorem \ref{schrammsislargeonaverage}]
Propositions \ref{rootnlemma} and \ref{sbiglemma} can be used to prove Theorem \ref{schrammsislargeonaverage}: let $t_1 \geq 9n$, and condition on $(X_{t_1}, Y_{t_1})\in Q_{t_1}$, where
\begin{equation}\label{Pdef}
Q_{t_1}=\left\{(X_{t_1}, Y_{t_1}) \text{ such that } s(X_{t_1}, Y_{t_1}) \geq n^{1/3}, \left| V_{t_1}^\pi\left(n^{1/3}\right)\right| \geq \frac{n}{2}\right\} 
\end{equation}
Letting $2^{j+1} = n^{1/3}$ and $\delta = \frac{1}{2}$, if $(X_{t_1},Y_{t_1}) \in Q_{t_1}$, then 
\begin{equation*}
s(X_{t_1},Y_{t_1}) \geq 2^{j+1} \text{ and } \left|V_{t_1}(2^{j+1})\right| \geq \delta n 
\end{equation*}
Since $\rho(X_{t_1}, Y_{t_1}) \leq 1$, Proposition \ref{sbiglemma} applies to pairs $(X_{t_1}, Y_{t_1})$ in $Q_{t_1}$. Therefore, averaging over $(X_{t_1},Y_{t_1}) \in Q_{t_1}$, 
\begin{equation*}
\prob\left\{s(X_{t_1+\tau_K}, Y_{t_1+\tau_K}) < \epsilon \delta n \mid (X_{t_1}, Y_{t_1}) \in Q_{t_1} \right\} \leq O(1) \delta^{-1} \epsilon \left| \log(\epsilon \delta)\right|
\end{equation*} 
for any $\epsilon \in (0, 1/32)$. Now, pick $\epsilon$ such that the right hand side of the above inequality is at most $1/2$. Then, 
\begin{equation*}
\prob\left\{s(X_{t_1+\tau_K}, Y_{t_1+\tau_K}) \geq  \epsilon \delta n \mid (X_{t_1}, Y_{t_1}) \in Q_{t_1} \right\} \geq \frac{1}{2}
\end{equation*}\
and therefore, for sufficiently large $n$,
\begin{equation*}
\prob \left\{s(X_{t_1+\tau_K}, Y_{t_1+\tau_K}) \geq  \epsilon \delta n  \right\} \geq \frac{\prob (Q_{t_1})}{2} \geq \frac{1}{4}
\end{equation*}
using Lemma \ref{rootnlemma}. Therefore, 
\begin{equation}\label{almostthere} 
\expect \left[ s(X_{t_1+ \tau_K}, Y_{t_1 + \tau_K}) \right] \geq \frac{\epsilon \delta n}{4}
\end{equation}
It now just remains to show that is that $t_1 + \tau_K$ can be of order $n$. Since $\delta = \frac{1}{2}$ and $2^{j+1} = n^{1/3}$, by Equation \eqref{taudef}
\begin{align*}
\tau_K &= \sum_{i=j}^{K-1} \lceil 2 \delta^{-1} 2^{-i} n (\log_2 n - i)\rceil  = O\left( n \log n \sum_{r=j}^{K-1} 2^{-i} \right) \\
&= O\left( n \log n\cdot 2^{-j+1}  \right)
 = O( n^{2/3} \log n)
\end{align*}
Since $t_1\geq 9n$ is arbitrary and $\tau_K$ is $o(n)$, Equation \eqref{almostthere} implies that 
\begin{equation*}
\expect \left[ s(X_t, Y_t) \right] \geq \frac{\epsilon \delta n}{4}
\end{equation*}
for all $t \geq 10n$, which is precisely what is needed.
\end{proof}

Before the next two sections, in which Propositions \ref{rootnlemma} and \ref{sbiglemma} are proved, some technical results are needed. These are proved in Section \ref{schrammtechnicallemmas} below, and are instrumental for controlling the probabilities in the next two sections. 

\begin{lemma}\label{schrammshrinklemma}
Let $\sigma$ be in $S_n$, and let $(\bar{X}_t)_{t\geq 1}$ be the random transposition walk starting at $\sigma$. Then, the expected number of $v$ such that $\left|C_1(v)\right| < \left|C_0(v)\right|$ and $\left|C_1(v)\right| < x$ is no greater than $\frac{x^2}{n}$. 
\end{lemma}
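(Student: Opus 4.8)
The plan is to use linearity of expectation: the quantity in question is $\sum_{v=1}^n \prob\{|C_1(v)| < |C_0(v)| \text{ and } |C_1(v)| < x\}$, so it suffices to bound the per-vertex probability by $x^2/n^2$. The starting observation is that a single random transposition step multiplies $\bar X_0$ by a transposition $(R_1,L_1)$ (or does nothing if $R_1 = L_1$), and the effect on cycle structure is: if $R_1,L_1$ lie in the same cycle of $\bar X_0$, that cycle splits into two strictly smaller cycles; if they lie in different cycles, those two cycles merge; otherwise nothing changes. Consequently $|C_1(v)| < |C_0(v)|$ if and only if the cycle $C_0(v)$ splits, i.e.\ $R_1 \neq L_1$ and both lie in $C_0(v)$.

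Next I would fix $v$ with $\ell := |C_0(v)|$ and compute, for each $1 \le k \le \ell - 1$, the probability that the split puts $v$ into a new cycle of size exactly $k$. Writing the cycle of $\bar X_0$ through $v$ as $(v = u_0, u_1, \dots, u_{\ell-1})$ with $\bar X_0(u_i) = u_{i+1 \bmod \ell}$, transposing $u_p$ and $u_q$ (say $0 \le p < q \le \ell-1$) yields the two cycles $\{u_p,\dots,u_{q-1}\}$ (size $q-p$) and $\{u_q,\dots,u_{\ell-1},u_0,\dots,u_{p-1}\}$ (size $\ell-(q-p)$). A direct count shows that exactly $k$ unordered pairs $\{u_p,u_q\}$ place $v=u_0$ into the piece of size $k$: one pair from the case $p=0$ (namely $\{u_0,u_k\}$) and $k-1$ pairs from the case $p \ge 1$ (namely $q - p = \ell - k$ with $1 \le p \le k-1$). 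Since each ordered pair $(R_1,L_1)$ of distinct values has probability $1/n^2$, this gives $\prob\{|C_1(v)| = k \text{ via a split}\} = 2k/n^2$.

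Summing over the relevant values of $k$,
\begin{equation*}
\prob\{|C_1(v)| < |C_0(v)|,\ |C_1(v)| < x\} \;=\; \sum_{1 \le k < x,\ k \le \ell-1} \frac{2k}{n^2} \;\le\; \sum_{k=1}^{\lceil x\rceil - 1} \frac{2k}{n^2} \;\le\; \frac{x^2}{n^2},
\end{equation*}
and then linearity of expectation over the $n$ choices of $v$ yields the bound $x^2/n$, as claimed.

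I expect the only genuinely delicate step to be the combinatorial count in the second paragraph: one has to keep careful track of the cyclic labelling of $C_0(v)$ and verify both which of the two arcs contains $v$ and that the two boundary cases ($p=0$ versus $p\ge 1$) together contribute exactly $2k$ ordered pairs. Everything else — the dichotomy split/merge/nothing, and the final summations — is routine. A small point worth a remark in the writeup is that when $x$ is not an integer the threshold $|C_1(v)| < x$ simply restricts to $k \le \lceil x \rceil - 1$, and the resulting sum is still dominated by $x^2/n^2$.
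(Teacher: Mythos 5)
Your proof is correct and takes essentially the same route as the paper: both arguments reduce, via linearity of expectation, to summing over splits of the cycle containing $v$ the probability-weighted count of vertices that land in a piece of size less than $x$, and then bound $\sum_{k < x} k$ by $x^2/2$. Your per-vertex probability $2k/n^2$, obtained by counting transposition pairs inside $C_0(v)$, is just a reorganization of the paper's per-cycle computation using the ordered split probability $\frac{a_i}{n^2}$, so the two proofs coincide in substance (and even share the same harmless looseness when $x$ is not an integer).
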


For the next four lemmas, let $(X_t, Y_t)$ be the usual coupling starting at $(\sigma, \tau)$, where $\rho(\sigma, \tau) = 1$, $s(\sigma, \tau) = b$ and $m(\sigma, \tau) = c$. 

\begin{lemma}\label{mshrinklemma}
If $x \leq c$, then 
\begin{equation*}
\prob\left\{m(X_1, Y_1) < x\right\} \leq \frac{2x^2}{n^2}. 
\end{equation*}
\end{lemma}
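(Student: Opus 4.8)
\textbf{Proof proposal for Lemma \ref{mshrinklemma}.}

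The plan is to track exactly which split-merge moves on $\sigma=(a_1,\dots,a_m,b,c)$ can cause $m(X_1,Y_1)$ to become smaller than $x$, where $x\le c$, and to bound their total probability. Recall that $m(X_t,Y_t)=c$ means the two chains differ in the parts $\{b,c\}$ versus $\{b+c\}$, with $b\le c$; after one coupled step they are again a single merge apart (by Lemma \ref{schrammcouplinglowerbound}), so $m(X_1,Y_1)$ is the larger of the two parts in which $X_1$ and $Y_1$ disagree (unless they have coupled, in which case $m=n\ge x$, contributing nothing to the event). So I only need to consider the moves in Definition \ref{schrammcouplingdefn} that leave the chains at distance exactly $1$ while shrinking the ``medium'' disagreement part below $x$.

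Going through the cases of Definition \ref{schrammcouplingdefn}: operations only on the $a_i$ leave the disagreement parts as $\{b,c\}$ vs.\ $\{b+c\}$, so $m$ stays $c\ge x$ --- no contribution. Merging $b$ or $c$ with some $a_i$ only increases the medium part, so again no contribution. Merging $b$ and $c$ in $\sigma$ either couples the chains or leaves them disagreeing on $\{r,b+c-r\}$ vs.\ $\{b+c\}$, whose medium part is $\max(r,b+c-r)\ge (b+c)/2 \ge c/2$; this could drop below $x$ only if $x>c/2$, but even then I should just bound its probability directly --- it is at most $\frac{1}{n^2}\cdot(\text{number of such }r)$, which is tiny. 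The genuinely relevant case is \textbf{splitting $c$} (or $b$) in $\sigma$: if $c$ is split into $\{r,c-r\}$ with $r\le c/2$, the chains end up disagreeing on $\{r,c-r\}$ vs.\ $\{b+c-r\}$ (say), and the medium part among the three is $\max(r,c-r,b)=\max(c-r,b)$ in the typical regime; for this to be $<x$ I need $c-r<x$, i.e.\ $r>c-x$. The probability that $c$ is split with $r$ in the range $(c-x,\,c/2]$ --- equivalently $c-r\in[?,x)$ --- is at most $\sum \frac{2c}{n^2}$ over at most $x$ values of the small piece, giving at most $\frac{2cx}{n^2}\le \frac{2x^2}{n^2}$ since $c$ need not exceed... wait, $c$ can be large. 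Let me instead bound it by the number of values the \emph{small} piece $\min(r,c-r)$ can take: for the medium disagreement part to be $<x$, \emph{both} other parts must be $<x$, so in particular $c-r<x$ forces the complementary small-piece count, and symmetrically splitting $b$ requires $b-r<x$. The careful bookkeeping shows the number of offending split outcomes is $O(x)$ in each of $b$ and $c$, each carrying probability $O(1/n^2)$ times the part size, but since both resulting pieces are forced to be $<x$ we get a product bound of order $x\cdot x/n^2$.

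The key step --- and the main obstacle --- is the precise case analysis of \emph{which} part becomes the ``medium'' one after each split and the associated counting: one must be careful that when $c$ is split into $\{r,c-r\}$, after reordering the disagreement is between $\{r,c-r\}\subset X_1$ and a single part of $Y_1$, and correctly identify $m(X_1,Y_1)$ as the second-largest relevant part. Once that identification is pinned down, the event $\{m(X_1,Y_1)<x\}$ forces a split of $b$ or $c$ in which \emph{both} pieces are smaller than $x$ (or one of the negligible merge/stay cases), there are at most $O(x)$ such splits weighted by probability $\le \frac{2\max(b,c)}{n^2}$ but with the effective count of admissible small pieces bounded by $x$ so the total is at most $\frac{2x^2}{n^2}$. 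I would organize the write-up as: (1) reduce to the non-coupling event and recall $\rho(X_1,Y_1)=1$; (2) eliminate the $a_i$-only, $a_i$-merge, and $b,c$-merge cases as contributing $0$ or a negligible amount; (3) handle the split-$c$ and split-$b$ cases by the counting argument above; (4) sum to get $\frac{2x^2}{n^2}$.
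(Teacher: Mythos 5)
Your overall skeleton matches the paper's proof (go through the coupled moves of Definition \ref{schrammcouplingdefn}, discard the harmless ones, count the bad split outcomes), but two of your case assessments are wrong, and the quantitative step that actually produces the constant $2$ is missing. First, you have the contributing cases misallocated. Splitting $b$ contributes nothing: if $b$ splits into $\{r,b-r\}$ then $X_1=(a_1,\dots,a_m,r,b-r,c)$ and $Y_1=(a_1,\dots,a_m,r,b+c-r)$, so the disagreement is $\{b-r,c\}$ versus $\{b+c-r\}$ and $m(X_1,Y_1)=c\ge x$ always. Conversely, the merge-$b$-and-$c$ case is \emph{not} negligible: the pairing ``merge $b,c$ in $\sigma$, split $b+c$ into $\{s,b+c-s\}$ in $\tau$'' occurs with probability up to $\frac{2(b+c)}{n^2}$ per value of $s$ (not $\frac{1}{n^2}$ as you claim), and when $b+c<2x$ its total contribution can be of order $\frac{x^2}{n^2}$ --- it consumes fully half of the lemma's budget and must be bounded by the same counting argument as the split-$c$ case, which is exactly what the paper does (count of bad $s$ at most $\frac{2x-(b+c)}{2}$, probability at most $\frac{2(b+c)}{n^2}$ each, then $(b+c)\bigl(2x-(b+c)\bigr)\le x^2$).

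Second, for the split-$c$ case your counting stops at ``a product bound of order $x\cdot x/n^2$.'' The precise step you need, and do not supply, is: the offending $r$ must satisfy both $r>c-x$ and $r\le \frac{c}{2}$, so there are at most $\frac{2x-c}{2}$ of them (and none unless $2x\ge c$); each has probability at most $\frac{2c}{n^2}$; hence the contribution is at most $\frac{c(2x-c)}{n^2}\le\frac{x^2}{n^2}$ by AM--GM. Without using the upper limit $r\le\frac{c}{2}$ and AM--GM, the crude count ``at most $x$ values, each of probability at most $\frac{2c}{n^2}\le\frac{4x}{n^2}$'' already gives $\frac{4x^2}{n^2}$ from this case alone, exceeding the claimed total of $\frac{2x^2}{n^2}$; and since the merge case genuinely contributes up to another $\frac{x^2}{n^2}$, your proposed bookkeeping cannot close to the stated bound. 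So the gap is concrete: identify the two contributing cases correctly (split $c$; merge $b,c$), and in each apply the interval count $\left(c-x,\tfrac{c}{2}\right]$ (resp.\ $\left(b+c-x,\tfrac{b+c}{2}\right]$) together with AM--GM to get $\frac{x^2}{n^2}+\frac{x^2}{n^2}=\frac{2x^2}{n^2}$.
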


\begin{lemma}\label{mgrowthlemma}
If $x \leq c$, and if $| V_0(y)| \geq R$, then
\begin{equation*}
\prob \{m(X_1, Y_1) \geq x +y \} \geq  \frac{2c(R- 2c)}{n^2} 
\end{equation*}
\end{lemma}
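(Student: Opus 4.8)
The plan is to analyze the single step of the coupling $(X_1, Y_1)$ started at the configuration $\sigma = (a_1, \dots, a_m, b, c)$, $\tau = (a_1, \dots, a_m, b+c)$, and identify an event, depending only on what happens to the parts other than $b$ and $c$, that forces $m(X_1, Y_1) \geq x + y$. Recall that by Definition \ref{schrammcouplingdefn}, the operations that involve only the $a_i$ are performed identically in $\sigma$ and $\tau$, so on such an event the parts $b, c$ of $\sigma$ and the part $b+c$ of $\tau$ are untouched, and hence $s(X_1, Y_1) = b$ while $m(X_1, Y_1) = c = m(\sigma,\tau)$. But that alone only gives $m(X_1, Y_1) = c$, not $c + y$; so instead I want the event that $c$ (in $\sigma$) and $b+c$ (in $\tau$) each get merged with one of the $a_i$, specifically with an $a_i$ belonging to a cycle of size at least $y$. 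Under the coupling, merging $c$ and $a_i$ in $\sigma$ is coupled with merging $b+c$ and $a_i$ in $\tau$; after this move, the part containing $c$ in $X_1$ has size $c + a_i \geq c + y$ (if $a_i \geq y$), and since this new part still contains the original $c$, which is the larger of the two differing parts, one checks that $m(X_1, Y_1) = c + a_i \geq x + y$ using $x \leq c$. So the event "$c$ is merged with some part $a_i$ of size $\geq y$" is contained in $\{m(X_1, Y_1) \geq x + y\}$.

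Next I would lower-bound the probability of this event. By Definition \ref{splitmergedesc}, for a fixed part $a_i$ the probability that $c$ and $a_i$ are merged in $\sigma$ is $\frac{2 c a_i}{n^2}$. Summing over all parts $a_i$ (other than $b$ and $c$) that lie in cycles of size at least $y$, the total probability is $\frac{2c}{n^2} \sum a_i$, where the sum is over such parts. Now $|V_0(y)| \geq R$ means the union of all parts of size $\geq y$ has total size at least $R$; among these parts, at most two of them can be $b$ or $c$, and each such part has size at most $c$, so excluding $b$ and $c$ removes at most $2c$ from the total. Hence $\sum a_i \geq R - 2c$, giving a lower bound of $\frac{2c(R - 2c)}{n^2}$ for the probability, which is exactly the claimed bound. (If $R - 2c \leq 0$ the bound is vacuous, so there is nothing to prove in that case.)

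The main subtlety, and the step I would be most careful about, is verifying that merging $c$ with $a_i$ really does make $m(X_1, Y_1) = c + a_i$ rather than something smaller — i.e., that after the merge, the differing parts of $X_1$ and $Y_1$ are $\{b,\ c + a_i\}$ versus $\{b + c + a_i\}$, with $b \leq c + a_i$, so that $s = b$ and $m = c + a_i$. This follows because the coupling keeps $\rho(X_1, Y_1) \le 1$ (Lemma \ref{schrammcouplinglowerbound}) and the only parts that were altered are $c \mapsto c + a_i$ in $X_1$ and $b + c \mapsto b + c + a_i$ in $Y_1$, so the canonical form of Definition \ref{smdefn} for the pair $(X_1, Y_1)$ has $b$ as the small differing part and $c + a_i$ as the medium one; since $x \le c \le c + a_i$, we get $m(X_1,Y_1) = c + a_i \ge x + y$ whenever $a_i \ge y$. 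Once this is pinned down, the rest is just the probability accounting described above.
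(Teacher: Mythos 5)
Your proposal matches the paper's proof essentially step for step: the same key event (merging $c$ with a part $a_i \geq y$, which under the coupling is tied to merging $b+c$ with $a_i$ in the other chain, forcing $m(X_1,Y_1) = c + a_i$), the same per-merge probability $\frac{2ca_i}{n^2}$, and the same bound $\sum_{a_i \geq y} a_i \geq |V_0(y)| - (b+c) \geq R - 2c$. The only point you gloss over is that $V_0(y)$ is defined from $X_0$ rather than $Y_0$, so one cannot simply assume $X_0$ is the partition with more parts; the paper treats the case $X_0 = (a_1,\dots,a_m,b+c)$ separately, but as it notes (and as your ``removes at most $2c$'' accounting already shows), that case is completely analogous, so this is a cosmetic rather than substantive omission.
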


\begin{lemma}\label{sshrinklemma}
If $x \leq b$, then 
\begin{equation*}
\prob \{s(X_1, Y_1) < x\} \leq \frac{4x^2}{n^2}
\end{equation*}
\end{lemma}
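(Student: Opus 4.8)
The statement to prove is that for the usual coupling $(X_t, Y_t)$ started at $(\sigma, \tau)$ with $\rho(\sigma,\tau)=1$, $s(\sigma,\tau)=b$, $m(\sigma,\tau)=c$, and any $x \le b$, one has $\prob\{s(X_1, Y_1) < x\} \le \frac{4x^2}{n^2}$. The natural approach is to trace through the cases of Definition \ref{schrammcouplingdefn} and identify exactly when $s(X_1, Y_1)$ can drop below its current value $b$. Recall that after one step the pair $(X_1,Y_1)$ is again at distance at most $1$, so $s(X_1,Y_1)$ is well-defined (using the convention $s(\sigma,\sigma)=n/2$ when the chains have coupled, which trivially is $\ge x$ since $x \le b \le n/2$). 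So I only need to worry about steps that leave the chains one apart \emph{and} create a new small differing part.

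First I would observe that the differing parts of $(X_1, Y_1)$ are governed by whichever of $b$, $c$ got modified — operations purely on the $a_i$'s leave the differing pair as $\{b, c\}$ versus $\{b+c\}$, so $s$ is unchanged at $b \ge x$. The only ways to get a \emph{smaller} differing part are: (i) split $b$ in $\sigma$ into $\{r, b-r\}$ with $r \le b/2$, after which the small differing part becomes $\min(r, b-r) = r$ (since $X_1 = (\dots, r, b-r, c)$ and $Y_1 = (\dots, r, b+c-r)$, so they differ in $b-r$ and $c$ versus $b+c-r$ — actually need to check carefully which two parts get combined, but in any case the new small differing part is at most $r \le b/2$); (ii) merge $b$ and $a_i$ in $\sigma$, giving differing parts related to $c$ — here the small differing part is $c \ge b \ge x$, no problem; (iii) merge $c$ and $a_i$ in $\sigma$, leaving $b$ as the small differing part, no problem; (iv) split $c$ in $\sigma$ into $\{r, c-r\}$ — this gives $X_1$ with parts $r, c-r$ and $Y_1$ with $r, b+c-r$ or similar, and the small differing part could be as small as $\min(b, r, c-r)$, which can be less than $x$; (v) the merge-$b$-and-$c$ case coupled to a split of $b+c$ in $\tau$ into $\{r, b+c-r\}$, which makes the small differing part $\min(r, b+c-r) \le (b+c)/2$, potentially small. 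So the dangerous events are splits of $b$, splits of $c$, and the merge-$b$-$c$-to-split cases.

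Next I would bound each contribution. For splitting $b$ into $\{r, b-r\}$ with $r < x$: summing the split-merge probabilities $\frac{2b}{n^2}$ over the relevant $r$ gives at most $\frac{2b \cdot x}{n^2} \le \frac{2x^2}{n^2}$ (using $b \le n$... no, rather using that we need $r$ small; more carefully, $r$ ranges over at most $x$ values, each with probability $\le \frac{2b}{n^2}$, but $b$ could be large — so instead I use that the new small part being $< x$ forces $r < x$ OR $b - r < x$; with $r \le b/2 \le n/2$ this needs care). The cleanest bookkeeping: the event $\{s(X_1,Y_1) < x\}$ requires that some part of size $< x$ appears among the differing parts, and every such event is produced by splitting $b$, splitting $c$, or splitting $b+c$ (in the merge case), in each case landing a piece of size $< x$; the number of $r$-values yielding a piece $< x$ is at most $x$ in each sub-case, and each split has probability at most $\frac{2(b+c)}{n^2} \le \frac{2n}{n^2} = \frac{2}{n}$ — that's too weak. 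Instead, the right bound is: splitting $b$ has total probability only $\frac{2b}{n^2}$ spread over $\lfloor b/2 \rfloor$ values, and landing a piece $< x$ uses at most $\min(x, b/2)$ of them, contributing $\le \frac{2bx}{n^2}$; but we also need $b x \le 2x^2$?? No. So I would instead argue that to get $s < x$ from splitting $b$, we need the \emph{smaller} piece $< x$, and since the smaller piece is $\le b/2$ but we want it specifically $< x$, the probability is $\le \frac{2b}{n^2} \cdot \#\{r \le b/2 : r < x\} / (\text{...})$ — hmm, actually each \emph{specific} split $\{r,b-r\}$ has probability $\frac{2b}{n^2}$ and there are at most $x$ choices of $r$ with $r < x$, so the contribution is $\le \frac{2bx}{n^2}$. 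To get the claimed $\frac{4x^2}{n^2}$ I'd combine: $\frac{2bx}{n^2}$ from splitting $b$ (but this needs $b \le 2x$, which is false in general!) — so the actual argument must be that when $b > x$, a split of $b$ lands a piece $< x$ with the smaller piece being that small piece, and there are at most $x-1$ such splits each of probability $\frac{2b}{n^2}$...

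The resolution, which I'd present, is: the contribution from splitting $b$ is at most $\sum_{r=1}^{x-1}\frac{2b}{n^2}$, and one uses $b < n$ combined with the structural fact that actually we should weight by $b$, not $x$; so I will instead bound the split-$b$ contribution by noting each problematic split needs a piece of size in $\{1,\dots,x-1\}$, there are at most $x$ such pieces, each arising with probability $\le \frac{2b}{n^2} \le \frac{2x \cdot (n/x)}{n^2}$... this is going in circles, so let me just state the honest bound: the split-$b$ contribution is $\le \frac{2bx}{n^2}$, the split-$c$ contribution is $\le \frac{2cx}{n^2}$, and the merge-$b$-$c$ contribution is $\le \frac{2(b+c)x}{n^2} \cdot (\text{something})$; summing gives at most $\frac{4(b+c)x}{n^2} \le \frac{4nx}{n^2} = \frac{4x}{n}$, which is weaker than claimed unless $x \ge$ something. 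The \textbf{main obstacle} is therefore getting the quadratic-in-$x$ bound $\frac{4x^2}{n^2}$ rather than the easy linear-in-$x$ bound $\frac{4x}{n}$; the key realization that makes it work is that to produce a differing part of size exactly $k < x$, one must split something into a piece of size $k$, and crucially \emph{the part being split must itself have size at most... } — no. Rather: I expect the real argument mirrors Lemma \ref{schrammshrinklemma} and its proof — the small cycle $v$ being created means a transposition hit inside a small structure; here the probability of splitting to create a piece of size $k$ is $\frac{2(b+c)}{n^2}$ regardless, and one sums over $k < x$ to get $\frac{2(b+c)x}{n^2}$, then bounds $b+c \le n$ — giving $\frac{2x}{n}$, still linear. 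So I suspect the lemma as stated uses that $x \le b$ AND that we only count the \emph{smaller} resulting piece being $< x$, so for splitting $b+c$ the smaller piece $< x$ means $r < x$ where $r$ ranges over $< (b+c)/2$; probability $\sum_{r<x} \frac{2(b+c)}{n^2} = \frac{2(b+c)x}{n^2}$; and $(b+c) \le n$ gives $\frac{2x}{n}$. I would then need $x \le n/2$ to get... still not $x^2/n^2$.

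I acknowledge I cannot close this gap cleanly in the plan: the genuine hard step is the quadratic bound, and I would resolve it by following the author's likely use of Lemma \ref{schrammshrinklemma} directly — since $m(\sigma,\tau)=c$ and $s(\sigma,\tau)=b \ge x$, and the only new small differing part comes from a split event whose probability is proportional to the size of the part split, one writes the probability of splitting part of size $s$ into a piece $<x$ as $\le \frac{2s}{n^2}\cdot x$ but \emph{then observes the relevant part sizes $b, c, b+c$ are all we sum over and the \textbf{event} additionally requires the transposition land in a specific location} — concretely, I would argue that $\prob\{s(X_1,Y_1)<x\} \le \prob\{\text{a transposition creates a cycle of size} <x \text{ among the differing parts}\}$ and bound the latter by a direct count of ordered pairs $(R_1, L_1)$ producing such a configuration, which is at most $4x^2$ out of $n^2$ total — two endpoints each constrained to roughly $x$ choices. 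That is the step I expect to require the most care, and it is where I would spend the bulk of the write-up.
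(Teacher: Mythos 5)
There is a genuine gap, and you identified it yourself: you never obtain the quadratic bound, and your closing suggestion does not supply it. The missing idea is to track \emph{which} piece of a split remains a differing part under the coupling. When $b$ is split into $\{r,b-r\}$ with $r\leq b/2$, the coupling splits $b+c$ in $\tau$ into $\{r,b+c-r\}$, so the piece $r$ is \emph{common} to $X_1$ and $Y_1$; the differing parts are $b-r,\,c$ versus $b+c-r$, hence $s(X_1,Y_1)=b-r$, not $r$ (your case (i) has this backwards, and similarly in case (iv) the relevant quantity is $\min(b,c-r)$, so since $b\geq x$ you need $c-r<x$). This is exactly what rescues the bound: $s(X_1,Y_1)<x$ forces $r$ into the window $b-x<r\leq b/2$, which is nonempty only if $b<2x$ and contains at most $(2x-b)/2$ values, each of probability at most $\frac{2b}{n^2}$, giving a contribution $\frac{b(2x-b)}{n^2}\leq\frac{x^2}{n^2}$ by AM--GM; the split-$c$ case gives $\frac{c(2x-c)}{n^2}\leq\frac{x^2}{n^2}$ the same way. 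Your accounting, which only constrains $r<x$ and multiplies by the full split probability, inevitably yields the linear bound $O(x/n)$ you kept running into.

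The third case also needs the coupling structure, not a raw split probability. When $b$ and $c$ merge in $\sigma$, the coupled move in $\tau$ is a split of $b+c$ into $\{r,b+c-r\}$ occurring only with the \emph{residual} probability $P_r$ left over after the splits of $b$ and of $c$ in $\sigma$ have been accounted for: $P_r=0$ for $r<\frac{b}{2}$, $P_r\leq\frac{2b}{n^2}$ for $\frac{b}{2}\leq r<\frac{c}{2}$, and $P_r\leq\frac{2(b+c)}{n^2}$ for $r\geq\frac{c}{2}$, so in all cases $P_r\leq\frac{4r}{n^2}$; summing over $r<x$ gives at most $\frac{2x^2}{n^2}$, and the three contributions total $\frac{4x^2}{n^2}$. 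Your proposed fallback --- bounding the event by a count of transposition endpoints $(R_1,L_1)$, ``two endpoints each constrained to roughly $x$ choices'' --- is not justified and, done naively, fails: producing a cycle of size $<x$ inside the differing parts allows roughly $2x(b+c)$ ordered pairs, which is again only $O(x/n)$; moreover the coupling here is defined at the partition level, so the residual probabilities $P_r$ in the merge case are not captured by endpoint counting in a single chain at all.
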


\begin{lemma}\label{sgrowthlemma}
If $x$ and $y$ satisfy $x \leq b < x+y \leq c$, and $| V_0(y)| \geq R$, then
\begin{equation*}
\prob \{s(X_1, Y_1) \geq x + y \} \geq  \frac{2b(R- 3x-3y)}{n^2} 
\end{equation*}
\end{lemma}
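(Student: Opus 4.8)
The plan is to produce a single favorable event in the one-step coupling that forces $s(X_1, Y_1) \geq x+y$, and to lower-bound its probability by $\frac{2b(R - 3x - 3y)}{n^2}$. Recall that $\sigma = (a_1, \dots, a_m, b, c)$ and $\tau = (a_1, \dots, a_m, b+c)$, so $s(\sigma,\tau) = b$ is the smaller of the two disagreement parts $\{b, c\}$ in $\sigma$ versus $\{b+c\}$ in $\tau$. The key observation is that $s(X_1, Y_1)$ is the smaller disagreement part \emph{after} one step, and the disagreement is controlled by which of $b$, $c$ gets modified. Since we want the \emph{small} disagreement part to become large, we want to \emph{merge} $b$ in $\sigma$ with some big cycle. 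Concretely: look at the coupling case where $b$ and $a_i$ are merged in $\sigma$ (with $a_i$ some part of size $\geq y$), which in $\tau$ is coupled with merging $b+c$ and $a_i$. After this move,
\begin{align*}
X_1 &= (\dots, a_i + b, c), \\
Y_1 &= (\dots, a_i + b + c),
\end{align*}
so the disagreement parts are $\{a_i + b,\ c\}$ in $X_1$ versus $\{a_i + b + c\}$ in $Y_1$; here $a_i + b \geq y \geq$ (we will arrange) the threshold, and $c \geq x + y$ by hypothesis, so both disagreement parts are $\geq x+y$, giving $s(X_1, Y_1) \geq x + y$. (One must also check $\rho(X_1, Y_1) = 1$, which is immediate from Lemma \ref{schrammcouplinglowerbound}; and that this move is not in the coupling's ``coupling'' cases, which it is not since merging $b$ with an $a_i$ never causes the chains to meet.)

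Next I would lower-bound the probability of this event. By Definition \ref{splitmergedesc}, merging $b$ with a specific part $a_i$ has probability $\frac{2 b a_i}{n^2}$ in $\sigma$, and in the coupling this is exactly the move described above. Summing over all parts $a_i$ of size $\geq y$ \emph{other than} $b$ and $c$ themselves — i.e., over the cycles contributing to $V_0(y)$ minus possibly $b$ and $c$ — the total probability is $\frac{2b}{n^2} \sum_{a_i \geq y,\ a_i \neq b, c} a_i$. Since $|V_0(y)| \geq R$ counts the total size of \emph{all} cycles of size $\geq y$ in $X_0 = \sigma$, and the parts $b$, $c$ each have size at most $c$, removing them from the sum costs at most $2c$; but a cleaner bound, matching the $3x + 3y$ in the statement, is to note that if $b \geq y$ it contributes $\leq$ something and similarly for $c$ — here I'd track exactly which parts among $b, c$ could lie in $V_0(y)$ and bound their total contribution by $3x + 3y$ (using $b < x + y$ and being slightly generous, likely also absorbing the $a_i$ that equal $b$ in value but are genuinely among the $a$'s, to avoid double-counting subtleties). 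This yields
\[
\prob\{s(X_1, Y_1) \geq x + y\} \geq \frac{2b}{n^2}\bigl(R - 3x - 3y\bigr),
\]
which is the claim (vacuous, hence trivially true, when $R \leq 3x + 3y$).

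The main obstacle I anticipate is bookkeeping rather than conceptual: correctly handling the parts $b$ and $c$ (which may or may not exceed $y$ and hence may or may not be counted in $V_0(y) = |V_0(y)|$), and making sure that when I restrict the sum $\sum a_i$ to parts \emph{strictly among the $a_1, \dots, a_m$} I don't accidentally exclude parts that happen to equal $b$ or $c$ in value but are legitimate separate parts. The slack ``$3x + 3y$'' rather than ``$2c$'' suggests the author is being deliberately crude here, using $b < x + y$ and $c$'s relation to the other quantities to get a bound in terms of $x, y$ only; I would follow that and not optimize the constant. A secondary check is that the move genuinely keeps $\rho = 1$ and genuinely produces disagreement exactly in the parts claimed (in particular that $c$ survives unchanged in $X_1$ and $b+c$'s fate in $Y_1$ is the merge, not a coincidental meeting), both of which follow directly from Definition \ref{schrammcouplingdefn}.
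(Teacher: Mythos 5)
Your favorable event (merge $b$ with a shared part $a_i \geq y$) is the right first ingredient, and your verification that it forces $s(X_1,Y_1)=\min(b+a_i,c)\geq x+y$ is correct. But the bookkeeping step you defer — bounding the contribution of the disagreement parts to $|V_0(y)|$ by $3x+3y$ — is not a technicality; it is where the plan breaks. The part $c$ satisfies only $c \geq x+y$, with no upper bound in terms of $x$ and $y$, and since $c \geq y$ it always belongs to $V_0(y)$ (in the case $X_0=\sigma$). Hence the honest bound from your single event is
\begin{equation*}
\prob\{s(X_1,Y_1)\geq x+y\} \;\geq\; \frac{2b}{n^2}\sum_{a_i\geq y} a_i \;\geq\; \frac{2b\bigl(R-(b+c)\bigr)}{n^2},
\end{equation*}
and $R-(b+c)$ can be far smaller than $R-3x-3y$, indeed negative. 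The failure is not just in the constants: take $\sigma=(b,c)$ with $c$ of order $n$, $b$ and $y$ small, and no shared part of size $\geq y$. Then $R=|V_0(y)|\approx c$, the lemma demands a bound of order $\frac{2bc}{n^2}$, yet your event has probability exactly $0$ because there is no $a_i\geq y$ to merge with.

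The paper closes this gap with a second favorable event: merging $b$ and $c$ in $\sigma$ while the coupled move in $\tau$ splits $b+c$ into $\{r,b+c-r\}$ with $r\geq x+y$, which gives $s(X_1,Y_1)=r\geq x+y$. Using the structure of the coupling (the splits of $b+c$ in $\tau$ with $r<x+y\leq c/2$ are already coupled to splits of $c$, and the ``stay in $\tau$'' mass is only $\frac{2b}{n^2}$), this event has probability at least $\frac{2b(c-x-y)}{n^2}$ when $c\geq 2x+2y$. Adding it to the first bound yields $\frac{2b(R-b-x-y)}{n^2}\geq\frac{2b(R-2x-2y)}{n^2}$, while for $c<2x+2y$ the first bound alone already gives $R-(b+c)\geq R-3x-3y$. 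So you need this case split and the second event; a single-event argument cannot reach the stated inequality. (A minor additional point: since $V_0(y)$ is defined from $\bar X_0$, the paper also treats the symmetric case in which $X_0$ is the partition containing $b+c$ rather than $b,c$.)
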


\section{Growing to $\Theta\left(n^{1/3}\right)$}\label{orderrootnsection}

This section proves Proposition \ref{rootnlemma}. It makes a lot of use of the results of Schramm in ``Compositions of random transpositions'' \cite{SchrammLargeCycles}. A number of definitions are needed to state his main result. 

\begin{defn}
If $(\bar{X}_t)_{t\geq 0}$ is the random transposition walk, define $G_t$ to be the graph on $\{1, 2, \dots, n\}$ such that $\{u, v\}$ is an edge in $G_t$ if and only if the random transposition $(u,v)$ has appeared in the first $t$ steps of our walk. Furthermore, let $W_t$ denote the set of vertices of the largest component of $G_t$. 
\end{defn} 

Note that the behavior of the $W_t$ defined above is well-understood; indeed,by an Erd\H{o}s-R\'{e}nyi theorem (see for example \cite{AlonSpencerBook}), if $t=cn$, then
\begin{equation}\label{Erdos}
\frac{|W_t|}{n} \rightarrow z(2c)
\end{equation}
in probability as $n\rightarrow \infty$, where $z(s)$ is the positive solution of $1 - z = e^{-zs}$.

\begin{defn}\label{PD1distribution}
The Poisson-Dirichlet ($PD(1)$) distribution is a probability measure on the infinite dimensional simplex $\Omega = \{(x_1, x_2, \dots) \left| \right.  \sum_{i=0}^\infty x_i = 1\}$. Sample from this simplex as follows: let $U_1, U_2, \dots$ be an i.i.d sequence of random variables uniform on $[0,1]$. Then, set $x_1 = U_1$, and recursively, 
\begin{equation*}
x_j = U_j\left(1 - \sum_{i=1}^{j-1} x_i\right)
\end{equation*}
Let $(y_i)$ be the $(x_i)$ sorted in nonincreasing order; then, the $PD(1)$ distribution is defined as the law of $(y_i)$. 
\end{defn} 

The main theorem (Theorem 1.1) of Schramm's paper \cite{SchrammLargeCycles} can now be stated. This remarkable result was proved using the tools of graph theory and coupling. A clever lemma showing that vertices that start in `sufficiently large' cycles are likely to end up in cycles of order $n$ also played a pivotal role (Lemma \ref{Schrammlemma} below is an almost exact reproduction of the result.) The full strength of the result is not needed: while Schramm determines the law of the large parts of $X_t$, the only fact necessary here is that after a sufficiently long time, these cycles are of order $n$. For this theorem, treat $X_t$ as an infinite vector by adding infinitely many $0$s at the end of it.

\begin{theorem}[Schramm] \label{schrammpapertheorem}
 Let $c > 1/2$, and take $ t= cn$. As $n \rightarrow \infty$, the law of $\frac{X_t}{|W_t|}$ converges weakly to the $PD(1)$ distribution; that is, for every $\epsilon > 0$, if $n$ is sufficiently large and $t\geq cn$, then there is a coupling of $X_t$ and a $PD(1)$ sample $Y$ such that 
\begin{equation}\label{PoisDir}
P\left\{ \left\|Y -  \frac{X_t}{ \left|W_t\right|} \right\|_\infty < \epsilon\right\} > 1-\epsilon
\end{equation}
where $\left\| \cdot \right\|_\infty$ is the standard $l^\infty$ distance. 
\end{theorem}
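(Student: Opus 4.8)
The plan is to exploit the standard dictionary between the random transposition walk and the Erd\H{o}s--R\'enyi random graph process, reduce the statement to the cycle structure of a permutation of the giant component, and then identify the scaling limit not by computing it but as the unique stationary law of the macroscopic split--merge dynamics.

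\emph{Reduction to the giant.} Each transposition $(u,v)$ that has appeared by time $t$ is an edge of $G_t$, and it is classical that every cycle of $\bar X_t$ is contained in a single connected component of $G_t$; so $W_t$ is a disjoint union of cycles of $\bar X_t$. For fixed $c>1/2$ and $t\ge cn$, the Erd\H{o}s--R\'enyi phase transition (see \eqref{Erdos}) gives, with high probability, a unique giant component with $|W_t|/n\to z(2c)$ and all other components of size $O(\log n)$. Hence every cycle of $\bar X_t$ longer than $C\log n$ lies in $W_t$, and since every cycle \emph{outside} $W_t$ has size $O(\log n)=o(|W_t|)$, those cycles become invisible in $\|\cdot\|_\infty$ after dividing by $|W_t|$. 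The object to analyze is therefore the partition of $W_t$ into the cycles of $\bar X_t$, rescaled by $|W_t|$ to a random mass-partition of $1$. It lives in a compact space (the ranked simplex with the product topology), so it is tight and subsequential weak limits $L$ exist; a Skorokhod-representation argument then turns weak convergence plus the identification $L=PD(1)$ into the coupling statement \eqref{PoisDir}.

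\emph{Identifying the limit.} First, Schramm's coalescence lemma (reproduced as Lemma \ref{Schrammlemma} below) --- that a vertex in a cycle of size at least $n^{1/3}$ shortly after the critical time is, with high probability, in a cycle of size $\Theta(n)$ by time $t$ --- combined with the bootstrap already carried out in this paper (a constant fraction of vertices lie in cycles of size $\ge n^{1/3}$ by time $(1/2+\epsilon_0)n$, via Lemmas \ref{schrammshrinklemma}--\ref{sgrowthlemma}) shows that, with high probability, essentially all of the mass of $W_t$ has accumulated into macroscopic cycles; in particular $L$ puts no mass into dust. On that macroscopic scale the dynamics is transparent: a transposition with both endpoints in one cycle splits it, one with endpoints in two cycles merges them, and endpoints are uniform on $[n]$, so as $n\to\infty$ the normalized cycle partition evolves by the uniform split--merge flow on the simplex. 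One then argues that this flow, started from the (post-coalescence) configuration reached shortly after time $n/2$ and run for the remaining $\Theta(n)$ steps --- i.e.\ for $\Theta(1)$ units of macroscopic time, which is available precisely because $c>1/2$ is a fixed constant --- comes within $o(1)$ of its stationary law. By the theorem of Diaconis, Mayer-Wolf, Zeitouni and Zerner (see \cite{DiaconisZeitouniPD1}) that stationary law is $PD(1)$, so $L=PD(1)$.

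The step I expect to be the main obstacle is proving that the macroscopic split--merge flow equilibrates in $\Theta(1)$ time from the small-scale initial data produced just after criticality --- equivalently, that the normalized cycle structure at time $cn$ forgets the structure at time $(1/2+\epsilon_0)n$. This is where Schramm's coalescence lemma does the heavy lifting, and that lemma is itself proved in \cite{SchrammLargeCycles} by a delicate second-moment and coupling analysis of the graph process; one cannot shortcut it using the coagulation--fragmentation mixing results of \cite{BerestyckiZeitouniSchramm}, since those in turn rest on the present theorem. A secondary but real nuisance is bookkeeping the ``high probability'' events --- in \eqref{Erdos}, in the coalescence lemma, and in the flow approximation --- so that their intersection still has probability close to $1$ before any limit in $n$ is taken. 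The remaining pieces, namely the graph-theoretic reduction and the DMWZ uniqueness theorem, are either routine or can be quoted directly.
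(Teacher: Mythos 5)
First, a point of orientation: the paper does not prove this statement at all --- it is quoted as Theorem 1.1 of Schramm \cite{SchrammLargeCycles}, and the results of Section \ref{orderrootnsection} that concern macroscopic cycles (in particular Lemma \ref{fraclemma}) are deduced \emph{from} it. So your proposal has to stand on its own as a proof of Schramm's theorem, and as written it has two genuine gaps.

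The first is a circularity. Your identification step uses the claim that a constant fraction of the vertices lie in cycles of size at least $n^{1/3}$ shortly after time $n/2$, which you attribute to ``the bootstrap already carried out in this paper'' via Lemmas \ref{schrammshrinklemma}--\ref{sgrowthlemma}. No such bootstrap exists here: those lemmas control a single step of the coupling $(X_t,Y_t)$, and the only place in the paper where a positive fraction of mass is shown to sit in large cycles is Lemma \ref{fraclemma}, whose proof invokes Theorem \ref{schrammpapertheorem} itself. In Schramm's paper, the emergence of mesoscopic cycle mass just after the critical time is a separate, nontrivial ingredient (proved by comparison with the emerging giant of the graph process), so you cannot borrow it from the present paper. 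The second gap is the identification of the limit: you assert that the normalized cycle partition, run under the uniform split--merge flow for $\Theta(1)$ units of macroscopic time, comes within $o(1)$ of its stationary law, and then quote \cite{DiaconisZeitouniPD1} for uniqueness of $PD(1)$. Uniqueness of the invariant measure carries no rate, and bounded-time equilibration of split--merge from the (otherwise uncontrolled) post-critical configuration is precisely the hard content of the theorem; you name it as the main obstacle but give no argument, so the heart of the proof is missing. For comparison, Schramm does not proceed by tightness plus an abstract stationarity identification: he couples $\bar X_t$ with a uniformly random permutation of the vertices of $W_t$ (whose normalized cycle structure is classically within $o(1)$ of $PD(1)$ and whose law is preserved by further transpositions inside $W_t$), and uses the coalescence estimate --- the ancestor of Lemma \ref{Schrammlemma} --- to force the two cycle structures to agree on the macroscopic scale; that coupling also yields the uniformity in $t\geq cn$ demanded in \eqref{PoisDir}, which your sketch, treating only $t=cn$, does not address.
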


The proof that follows uses Theorem \ref{schrammpapertheorem} to show that at time $t = n$, more than half the vertices are in cycles of order $n$ with high probability. This is used to `grow' $m(X_t, Y_t)$ to order $n^{1/3}$, after which the same is done for $s(X_t, Y_t)$. The results for $m(X_t, Y_t)$ are needed before the results for $s(X_t, Y_t)$: since $s(X_t, Y_t) \leq m(X_t, Y_t)$, $m(X_t, Y_t)$ constrains the growth of $s(X_t, Y_t)$ from above. Good control on $m$ is needed before tackling $s$. 
  
\begin{lemma}\label{fraclemma}
Let $k$ be a natural number not dependent on $n$. For sufficiently large $n$, that is, for $n > N =  N(k)$, 
\begin{equation*}
\prob \left\{\left|V_n\left(n/k\right)\right| >n/2\right\} \geq 1 - \frac{6}{k}
\end{equation*}
\end{lemma}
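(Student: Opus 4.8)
\textbf{Proof proposal for Lemma \ref{fraclemma}.}

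The plan is to deduce this directly from Schramm's theorem (Theorem \ref{schrammpapertheorem}) together with the Erd\H{o}s--R\'enyi estimate \eqref{Erdos}, applied at $c = 1$ (so $t = n$ and $2c = 2 > 1$, which is in the regime where both results apply). First I would recall that $|V_n(n/k)| = \sum_{a_i \geq n/k} a_i$ where $X_n = (a_1, a_2, \dots)$ (Remark \ref{Vtremark}), i.e.\ $|V_n(n/k)|$ is the total number of vertices lying in cycles of size at least $n/k$. Dividing by $|W_n|$, this equals $|W_n| \sum_i (X_n)_i/|W_n| \cdot \mathbbm{1}\{(X_n)_i/|W_n| \geq (n/k)/|W_n|\}$, so control on $X_n/|W_n|$ via $PD(1)$ plus control on $|W_n|/n$ gives control on $|V_n(n/k)|/n$.

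The key steps, in order: (1) By \eqref{Erdos} with $s = 2$, $|W_n|/n \to z(2)$ in probability, where $z(2) > 0$ is a fixed constant (the survival probability of a Poisson$(2)$ Galton--Watson tree); in particular $z(2) > 1/2$, and we may fix a constant $w_0$ with $1/2 < w_0 < z(2)$ so that $\prob\{|W_n| > w_0 n\} \to 1$. (2) Let $Y = (y_1, y_2, \dots)$ be a $PD(1)$ sample. Since $\sum_i y_i = 1$ almost surely, for the fixed $w_0$ and the fixed $k$ there is some threshold: the quantity $\sum_i y_i \mathbbm{1}\{y_i \geq 1/(k w_0)\}$ is close to $1$ with high probability when $k$ is large, because only a vanishing mass sits in coordinates below $1/(kw_0)$; more precisely I would bound $\expect[\sum_{y_i < 1/(kw_0)} y_i]$ and apply Markov. (This is where the dependence on $k$ — and the explicit $6/k$ — enters; the constant $6$ is generous slack absorbing the $z(2)$, $w_0$, and coupling-error contributions.) (3) Apply Theorem \ref{schrammpapertheorem} with a suitably small $\epsilon = \epsilon(k)$ to couple $X_n/|W_n|$ to such a $Y$ within $\|\cdot\|_\infty < \epsilon$; on this event, each coordinate $(X_n)_i/|W_n|$ with $y_i \geq 1/(kw_0) > 1/(k|W_n|/n) = (n/k)/|W_n|$ (adjusting for $\epsilon$) corresponds to a cycle of size $\geq n/k$, and the surviving mass is at least $(1-\epsilon)$ times what $Y$ contributes. (4) Multiply back by $|W_n| > w_0 n > n/2$ and combine the three high-probability events (the $|W_n|$ lower bound, the $PD(1)$ mass concentration, and the Schramm coupling) with a union bound, tuning constants so the total failure probability is at most $6/k$; then $|V_n(n/k)| \geq (\text{mass fraction}) \cdot |W_n| > n/2$ on the good event.

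The main obstacle will be step (2)–(3): carefully lining up the thresholds so that a $PD(1)$ coordinate $y_i$ bounded below by a $k$-dependent constant genuinely forces the corresponding cycle to have size $\geq n/k$ after accounting for the $\ell^\infty$ coupling error $\epsilon$ and the fluctuation of $|W_n|$ around $z(2) n$ — and then extracting the clean bound $6/k$ rather than some messier expression. The estimate $\expect[\sum_{y_i < \eta} y_i] = O(\eta |\log \eta|)$ or similar for $PD(1)$ is standard (it follows from the GEM/stick-breaking representation in Definition \ref{PD1distribution}, since $\expect[y_i]$ decays geometrically), so I would invoke it rather than rederive it, and choose $\eta$ small enough in terms of $k$ that this contributes at most, say, $2/k$ to the failure probability, leaving $2/k$ each for the other two events. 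The rest is bookkeeping with union bounds and the inequality $z(2) > 1/2$.
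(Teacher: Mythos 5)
Your proposal follows essentially the same route as the paper's proof: Erd\H{o}s--R\'enyi control of $|W_n|/n$ (the paper uses $|W_n|\geq 3n/4$ with high probability), Schramm's coupling of $X_n/|W_n|$ to a $PD(1)$ sample, a Markov-inequality bound on the $PD(1)$ mass sitting below a $k$-dependent threshold, and a union bound tuned to give $6/k$. The one point to fix in your sketch is the $PD(1)$ estimate: the paper uses the exact identity $\expect\bigl[\sum_{y_i\geq x} y_i\bigr]=1-x$ (equivalently, expected small-part mass exactly $x$, with no logarithmic factor), and this exactness matters, since an $O(\eta|\log\eta|)$ bound with threshold $\eta\asymp 1/k$ would yield a failure probability of order $(\log k)/k$, which exceeds $6/k$ for large $k$; with the exact identity your allocation of error also needs to mirror the paper's (most of the $6/k$ budget goes to the Markov step, only $O(1/k)$ slivers to the $|W_n|$ and coupling events), rather than $2/k$ to each of the three events.
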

\begin{proof}[\textbf{Proof:}] 
For convenience of notation, let $X = (x_1, x_2, \dots)$ be $X_n$, let $Q = (q_1, q_2, \dots)$ be $\frac{X_n}{|W_n|}$, and let $Y = (y_1, y_2, \dots)$ be a $PD(1)$ sample which is coupled with $Q$ to satisfy Theorem \ref{schrammpapertheorem} above. With current notation, 
\begin{equation}\label{Vcurrent}
\left|V_n\left(n/k\right)\right| = \sum_{x_i\geq \frac{n}{k}}x_i
\end{equation} 
For the rest of the proof, fix $\epsilon = \frac{1}{9k}$. First note that Equation \eqref{Erdos} implies that 
\begin{equation*}
\frac{|W_n|}{n} \rightarrow z(2) \approx 0.797 
\end{equation*}
in probability, which means that $\lim_{n\rightarrow \infty} \prob \left\{|W_n|/n < 3/4\right\} = 0$. Since $Q = X_n/|W_n|$, for sufficiently large $n$,
\begin{equation*}
\prob \left\{x_i \geq \frac{3n}{4} q_i \text{ for all } i \right\} >  1 - \epsilon
\end{equation*}
Furthermore, Theorem \ref{schrammpapertheorem} implies that for sufficiently large $n$, 
\begin{equation*}
\prob\left\{ q_i \geq y_i - \epsilon \text{ for all } i \right\} > 1- \epsilon
\end{equation*}
Combining the above two equations, 
\begin{equation}\label{lowerboundxi}
\prob\left\{ x_i \geq \frac{3n}{4}(y_i - \epsilon) \text{ for all } i \right\} > 1- 2\epsilon
\end{equation}
for sufficiently large $n$. 

Thus, to estimate $|V_n(n/k)|$ it suffices to consider the large parts of the $PD(1)$ sample $Y$. To that end, define the random variable
\begin{equation*}
G_Y(x) = \sum_{y_i \geq x} y_i  
\end{equation*}
It is easy to check that $\expect \left[ G_Y(x) \right] = 1 - x$, and therefore $\expect \left[ 1 - G_Y(x) \right] = x$. Thus, Markov's inequality implies that 
\begin{equation*}
\prob \{G_Y(x) \leq 3/4\}= \prob \{1-G_Y(x) \geq 1/4 \} \leq 4x 
\end{equation*}
Recall that $\epsilon = \frac{1}{9k}$. Then, combining the above with Equation \eqref{lowerboundxi}, 
\begin{equation}\label{setlowerbound}
\prob\left\{ x_i \geq \frac{3n}{4}\left( y_i - \epsilon\right) \text{ for all } i,  G_Y\left(\frac{13}{9k}\right) \geq \frac{3}{4}\right\} \geq  1- \frac{6}{k}
\end{equation}
Finally, assume that $x_i \geq \frac{3n}{4}\left(y_i -\epsilon \right)$ for each $i$, and that $ G_Y\left(\frac{13}{9k}\right) \geq \frac{3}{4}$. Then, Equation \eqref{Vcurrent} implies that 
\begin{align}\label{Vbound}
\left|V_n\left(n/k\right)\right| &\geq \sum_{\frac{3n}{4}(y_i - \epsilon) \geq \frac{n}{k}} \frac{3n}{4}(y_i - \epsilon) = \frac{3n}{4} \left( \sum_{y_i \geq 13/9k} y_i  - \sum_{y_i \geq 13/9k} \frac{1}{9k} \right) \nonumber \\
&\geq \frac{3n}{4}\left( G_Y\left(\frac{13}{9k}\right) - \frac{1}{13}\right)\geq \frac{n}{2}
\end{align}
using the fact that there can be at most $\frac{9k}{13}$ values of $y_i$ that are greater than $\frac{13}{9k}$, since the $y_i$ are positive and sum to $1$. Therefore, using Equation \eqref{setlowerbound}, for sufficiently large $n$
\begin{equation*}
\prob \left\{\left|V_n(n/k)\right| \geq \frac{n}{2}\right\} \geq 1-\frac{6}{k}
\end{equation*}
as required. 
\end{proof}
The above lemma is now applied to find a $t$ of order $n$ such that the probability of having $m(X_t,Y_t) \geq n^{1/3}$ is sufficiently high. Lemmas \ref{mshrinklemma} and \ref{mgrowthlemma} give control of $m(X_t, Y_t)$.

\begin{lemma}\label{mrootnlemma}
If $n$ is sufficiently large and $t\geq 5n$, then 
\begin{equation*}
\prob \left\{m(X_t, Y_t) \geq n^{1/3}, \left|V_t\left(n^{1/3}\right)\right| \geq \frac{n}{2}\right\} \geq \frac{4}{5}
\end{equation*} 
\end{lemma}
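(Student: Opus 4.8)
The plan is to split the time interval $[0,t]$ into two pieces: an initial stretch of length $n$ used to invoke Lemma \ref{fraclemma}, and a subsequent stretch of $O(n)$ steps used to grow $m(X_t,Y_t)$ to order $n^{1/3}$ by repeatedly applying Lemma \ref{mgrowthlemma} and Lemma \ref{mshrinklemma}. First I would fix a constant $k$ (independent of $n$) large enough that $6/k$ is small, say $6/k \le 1/20$, so that Lemma \ref{fraclemma} guarantees $\prob\{|V_n(n/k)| > n/2\} \ge 1 - 1/20$ for $n$ large. Condition on this event. Since $|V_n(n/k)| > n/2$ forces the partition $X_n$ to contain parts of size at least $n/k$, in particular $m(X_n, Y_n) \ge 1$ always holds trivially, but more importantly we are now in a regime where the reservoir of mass in large cycles, $R := |V_t(y)|$, stays bounded below by a constant fraction of $n$ over the short time window we care about — this needs to be checked, but it follows because each split-merge step can only move $O(1)$ expected mass out of $V_t(n/k)$ (a consequence of Lemma \ref{schrammshrinklemma} applied with $x = n/k$, giving expected loss $\le (n/k)^2/n = n/k^2$ per step, which over $O(n)$ steps... hmm, that is too much). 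Let me instead phrase the reservoir control directly via Schramm's Theorem \ref{schrammpapertheorem} at the later time as well, or re-derive it: actually the cleanest route is to note that $|V_t(n/k)|$ is large at the single time $t = n$ and then use a doubling scheme where at each stage we only need the reservoir at a \emph{single} well-chosen time, re-applying Lemma \ref{fraclemma}-type reasoning isn't possible since it is stated only at time $n$; so instead I would track $|V_{n+s}(n/k)|$ via a supermartingale-type bound built from Lemma \ref{schrammshrinklemma}.

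The core mechanism for growing $m$ is a geometric/doubling argument. Suppose at some time $t_0 \ge n$ we have $m(X_{t_0}, Y_{t_0}) \ge x$ and $|V_{t_0}(y)| \ge R$ with $R \ge cn$ for a fixed constant $c$. By Lemma \ref{mgrowthlemma}, in one step the probability that $m$ jumps to at least $x + y$ is $\ge 2x(R - 2x)/n^2 \gtrsim x/n$ as long as $x \ll R$. Meanwhile by Lemma \ref{mshrinklemma}, the probability that $m$ drops below $x$ in one step is $\le 2x^2/n^2$, which is much smaller than the growth probability when $x \ll n$. So, starting from $m \ge 1$, over a window of $O(n)$ steps we expect $\Omega(1)$ successful "growth" events before any "shrink below current level" event, each roughly doubling (or at least adding $y \ge$ current value) the size of $m$; after $O(\log n)$ such doublings — which fit comfortably into $O(n)$ steps since each doubling takes $O(n/x)$ expected steps and $\sum_{x = 2^j} n/x = O(n)$ — we reach $m \ge n^{1/3}$. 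I would formalize this with a union bound: the probability that some shrink event ever undoes us is bounded by summing $2x^2/n^2$ over the relevant steps, which is $o(1)$ because while $x$ stays well below $n^{1/2}$ (in particular while aiming only for $n^{1/3}$) each term is tiny; and the probability that we fail to accumulate enough growth events in the allotted $\le 4n$ steps (so that $5n$ total suffices) is controlled by a Chernoff/second-moment bound on a sum of independent-ish Bernoullis with the stated success probabilities. The desired conclusion $m(X_t, Y_t) \ge n^{1/3}$ together with $|V_t(n^{1/3})| \ge n/2$ then follows, with failure probability at most $1/20 + o(1) \le 1/5$, giving the claimed $4/5$.

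The main obstacle I anticipate is controlling the reservoir $R = |V_t(n^{1/3})|$ (or $|V_t(y)|$ for the intermediate thresholds $y$ used in the doubling) \emph{simultaneously} with the growth of $m$ over the full $O(n)$-step window, rather than at a single time. Lemma \ref{fraclemma} only gives a snapshot at $t = n$, so I need to argue that the large-cycle mass does not drain away over the next $\sim 4n$ steps. The subtlety is that a single merge can in principle move a large cycle's mass around, but it cannot \emph{destroy} mass — only splits can, and Lemma \ref{schrammshrinklemma} bounds the expected mass that falls below a threshold $x$ per step by $x^2/n$. For the small thresholds $x = n^{1/3}$ this is $n^{-1/3}$ per step, so over $4n$ steps the total expected drained mass is $O(n^{2/3}) = o(n)$; a Markov bound then shows $|V_t(n^{1/3})| \ge n/2$ is maintained with high probability throughout. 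For the intermediate doubling thresholds $y = 2^j$ with $j$ up to $\tfrac13 \log_2 n$, the same computation gives drained mass $O(2^{2j} \cdot (\text{time at level } j))$, and since we spend only $O(n/2^j)$ steps at level $j$ this is $O(2^j n) $ — which is \emph{not} $o(n)$ for large $j$, so here I would need to be more careful and threshold the reservoir at a \emph{fixed} small level $n/k$ throughout (whose mass drains at rate only $n/k^2$ per step, hence $O(n^2/k^2)$ over $O(n)$ steps — still too crude by brute Markov, so the honest fix is to bound the \emph{net} change using that mass is conserved except for the $\le x^2/n$ leakage and that leaked mass can partially return, i.e.\ to set up an actual supermartingale $|V_t(n/k)| + (\text{compensator})$). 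Getting this reservoir-stability bookkeeping right, so that $R \ge cn$ can be fed into Lemma \ref{mgrowthlemma} at every step of the doubling, is where the real work lies; the rest is a routine geometric-series-of-waiting-times argument.
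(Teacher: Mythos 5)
There is a genuine gap, and it is exactly the one you flag at the end: your argument hinges on the reservoir $\bigl|V_t(\cdot)\bigr|$ staying of order $n$ throughout an interval of length $\approx 4n$, and none of the mechanisms you sketch for this (per-step leakage via Lemma \ref{schrammshrinklemma}, a Markov bound, or an unspecified supermartingale with a compensator) is carried out or would work as stated -- you yourself note that the drain estimates are ``too much'' and ``too crude.'' The paper sidesteps this problem entirely: Lemma \ref{fraclemma} is not just a snapshot at $t=n$; by averaging over the possible values of $X_{t-n}$ (equivalently, because Theorem \ref{schrammpapertheorem} holds for every $t\geq cn$), one gets $\prob\{|V_t(n^{1/3})|\geq n/2\}\geq 1-\epsilon$ \emph{marginally at every single time} $t\geq n$. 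The proof then never needs the reservoir to be large simultaneously over the whole window: it runs a one-step recursion on $p_t=\prob(A_t)$ with $A_t=\{m(X_t,Y_t)\geq n^{1/3}\}$, using Lemma \ref{mshrinklemma} to bound the exit probability by $2n^{-4/3}p_t$ and Lemma \ref{mgrowthlemma} (applied on $A_t^c\cap S_t$, whose probability is at least $1-p_t-\epsilon$) to bound the entry probability below by roughly $\frac{1-\epsilon}{n}(1-p_t-\epsilon)$, yielding $1-3\epsilon-p_t\leq(1-\tfrac1n)^{t-n}$ and hence the $4/5$ bound at $t\geq 5n$.

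A second, smaller issue is that your doubling scheme is unnecessary and creates the very difficulties you then struggle with. Since $V_t(n/k)\subseteq V_t(y)$ for every $y\leq n/k$, having $|V_t(n/k)|\geq n/2$ already gives a reservoir of mass $n/2$ in cycles of size at least $n^{1/3}$, and Lemma \ref{mgrowthlemma} applied with $x=0$, $y=n^{1/3}$, $R=n/2$ says that $m$ jumps from any value below $n^{1/3}$ to at least $n^{1/3}$ in a \emph{single} merge, with probability at least $\frac{2(n/2-2n^{1/3})}{n^2}\approx \frac1n$ per step. So no intermediate thresholds $2^j$, no waiting-time bookkeeping, and no Chernoff bound over doublings are needed at this stage; that multiscale machinery is what the paper reserves for growing $s$ (and $m$) from $n^{1/3}$ up to order $n$ in Section \ref{ordernsection}. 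Your union bound on shrink events (summing $2x^2/n^2$ with $x=n^{1/3}$ over $O(n)$ steps) is fine, but as written the proposal does not close the reservoir-stability gap, so it does not yet constitute a proof.
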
 
\begin{proof}[\textbf{Proof:}]
From Lemma \ref{fraclemma}, at time $t= n$,
\begin{equation}\label{earlierresult}
\prob\left\{\left|V_t(n/k)\right| \geq \frac{n}{2}\right\} \geq 1-\frac{6}{k}
\end{equation}
Average over the possible values of $X_{t-n}$ to conclude that Equation \eqref{earlierresult} also holds for any time $t\geq n$. Now, for convenience of notation, define  
\begin{equation}\label{Stdefn} 
S_t = \left\{ (X_t,Y_t) \text{ s.t.} \left|V_t\left(n^{1/3}\right)\right| \geq \frac{n}{2} \right\}
\end{equation}
For sufficiently large $n$, $n^{1/3} \leq \frac{n}{k}$ for any fixed value of $k$. Fix $\epsilon > 0$. Then, for $t \geq n$ and sufficiently large $n$, Equation \eqref{earlierresult} implies that $\prob(S_t) \geq 1- \epsilon$. Furthermore, define
\begin{align}\label{Atdefn}
A_t &= \left\{(X_t, Y_t) \left| \right.  m(X_t, Y_t)\geq n^{1/3} \right\} 
\end{align}
To find a lower bound for $\prob(A_t \cap S_t)$ for $t\geq 10n$, note that 
\begin{equation}\label{intersectionlowerbound}
\prob (A_t \cap S_t) \geq \prob(A_t) - \prob(S_t^c) \geq \prob(A_t) - \epsilon 
\end{equation} 
and hence it suffices to bound $\prob(A_t)$. This is done using a recursive argument: at each step $t$, calculate the probability that $m(X_t, Y_t)$ was too small, but $m(X_{t+1}, Y_{t+1})$ is large enough, and vice versa. The probability of $A_t$ is shown to grow sufficiently quickly with $t$. 

Start by bounding the probability that  $m(X_{t+1}, Y_{t+1}) < n^{1/3}$ if $m(X_t, Y_t) \geq n^{1/3}$. By Lemma \ref{mshrinklemma} with $x = n^{1/3}$,
\begin{equation*}
\prob\{(X_{t+1}, Y_{t+1}) \notin A_{t+1} \left| \right. (X_t, Y_t) \in A_t\} \leq \frac{2x^2}{n^2} = \frac{2}{n^{4/3}}
\end{equation*}
and therefore
\begin{equation}\label{mlowerbound}
\prob\{(X_{t+1}, Y_{t+1}) \notin A_{t+1}, (X_t, Y_t) \in A_t\} \leq \frac{2}{n^{4/3}} \prob(A_t)
\end{equation}
Now bound the probability that $m(X_t, Y_t) < n^{1/3}$, while $m(X_{t+1}, Y_{t+1}) \geq n^{1/3}$. In order to bound this in a satisfactory way, enough parts of size $n^{1/3}$are needed; accordingly, work with $(X_t, Y_t)\in A_t^c \cap S_t$. If $m(X_t, Y_t) < n^{1/3}$ and $\left| V_t\left(n^{1/3}\right)\right| \geq \frac{n}{2}$, then using Lemma \ref{mgrowthlemma} with $x=0, y = n^{1/3}$, and $R=\frac{n}{2}$,
\begin{align*}
\prob\{(X_{t+1}, Y_{t+1}) \in A_{t+1} \left| \right. (X_t, Y_t) \in A_t^c \cap S_t\} \geq  \frac{2\left(n/2 - 2n^{1/3}\right)}{n^2} \geq \frac{1 - \epsilon}{n}
\end{align*} 
for sufficiently large $n$. Thus, for $t\geq n$, using the fact that $\prob(S_t)\geq 1 - \epsilon$, 
\begin{equation}\label{mupperbound}
\begin{split}
\prob\{(X_{t+1}, Y_{t+1}) \in A_{t+1}, (X_t, Y_t) \notin A_t\} &\geq  \left( \frac{1 - \epsilon}{n}  \right)\prob(A_t^c\cap S_t)  \\
&\geq \left( \frac{1 - \epsilon}{n}  \right) (1 - \prob(A_t)- \epsilon)
\end{split}
\end{equation}
for sufficiently large $n$. Combining Equations \eqref{mlowerbound} and \eqref{mupperbound}, 
\begin{align*}
\prob(A_{t+1}) - \prob(A_t) &\geq -\frac{2}{n^{4/3}} \prob(A_t) + \left( \frac{1 - \epsilon}{n}  \right) (1 - \prob(A_t)- \epsilon) \nonumber \\
&\geq \frac{1 - \prob(A_t) - 3\epsilon}{n} 
\end{align*}
for sufficiently large $n$ and $t\geq n$. Rearranging the above, 
\begin{equation}\label{mimportant}
\left(1 - 3\epsilon - \prob(A_{t+1})\right) \leq \left(1 - \frac{1}{n} \right) \left(1 - 3\epsilon - \prob(A_t)\right)
\end{equation}
and hence using recursion and the lower bound in Equation \eqref{intersectionlowerbound}, 
\begin{align*}
\left(1 - 3\epsilon - \prob(A_t)\right) &\leq \left( 1 - \frac{1}{n}\right)^{t-n} \leq e^{-(t-n)/n} \\
 \Rightarrow \prob(A_t \cap S_t) &\geq 1 - 4\epsilon -  e^{-(t-n)/n}
\end{align*} 
Thus, for $t\geq 5n$, $\prob(A_t \cap S_t) \geq 1 - 4\epsilon - e^{-4} \approx 1 - 4\epsilon - 0.018$, and picking $\epsilon$ appropriately completes the proof. 
\end{proof}

\begin{prop1}
For sufficiently large $n$, and $t\geq 9n$, 
\begin{equation*}
\prob \left\{s(X_t, Y_t) \geq n^{1/3}, \left| V_t\left(n^{1/3}\right)\right| \geq \frac{n}{2}\right\} \geq \frac{1}{2}
\end{equation*} 
\end{prop1}

\begin{proof}[\textbf{Proof:}] This proof is very similar to the one above. Let $t\geq 5n$, and define
\begin{equation*}
R_t = \left\{m(X_t, Y_t) \geq n^{1/3}, \left|V_t\left(n^{1/3}\right)\right| \geq \frac{n}{2}\right\}
\end{equation*}
From the above lemma, $\prob (R_t) \geq  \frac{4}{5}$.  Now, define 
\begin{align*}
C_t &= \left\{(X_t, Y_t) \left| \right. s(X_t, Y_t)\geq n^{1/3}\right\} 
\end{align*}
It is shown below that $\prob(C_t\cap R_t) \geq \frac{1}{2}$, which will clearly suffice. Note that for $t\geq 5n$,   
\begin{equation}\label{sintersectionlowerbound}
\prob(C_t \cap R_t) \geq \prob(C_t) - \frac{1}{5}
\end{equation}
and hence it suffices to find a lower bound on $\prob(C_t)$. As above, this is done by finding recursive bounds on the probability of $C_{t+1}$ given the probability of $C_t$. By Lemma \ref{sshrinklemma} with $x = n^{1/3}$,  
\begin{equation*}
\prob\{(X_{t+1}, Y_{t+1}) \notin C_{t+1} \left| \right. (X_t, Y_t) \in C_t\} \leq\frac{4x^2}{n^2} =  \frac{4}{n^{4/3}}
\end{equation*}
and therefore
\begin{equation}\label{slowerbound}
\prob\{(X_{t+1}, Y_{t+1}) \notin C_{t+1}, (X_t, Y_t) \in C_t\} \leq \frac{4}{n^{4/3}} \prob(C_t)
\end{equation}

Now, assume that $(X_t, Y_t) \in C_t^c \cap R_t$. Then $m(X_t, Y_t) \geq n^{1/3} > s(X_t, Y_t)$ and  $V_t\left(n^{1/3}\right) \geq \frac{n}{2}$. Therefore, using Lemma \ref{sgrowthlemma} with $x=0, y = n^{1/3}$, and $R=\frac{n}{2}$,  
\begin{align*}
\prob\{(X_{t+1}, Y_{t+1}) \in C_{t+1} \left| \right. (X_t, Y_t) \in C_t^c \cap R_t\} \geq  \frac{2\left(n/2 - 3n^{1/3}\right)}{n^2} = \frac{1}{n} - \frac{6}{n^{5/3}}
\end{align*} 
Thus, for $t\geq 5n$, using the fact that $\prob(R_t)\geq \frac{4}{5}$, 
\begin{equation}\label{supperbound}
\begin{split}
\prob\{(X_{t+1}, Y_{t+1}) \in C_{t+1}, (X_t, Y_t) \notin C_t\} &\geq  \left( \frac{1}{n} - \frac{6}{n^{5/3}}  \right)\prob(C_t^c\cap R_t) \\
&\geq \left( \frac{1}{n} - \frac{6}{n^{5/3}}  \right) \left(\frac{4}{5}- \prob(C_t)\right)
\end{split}
\end{equation}
for sufficiently large $n$. Therefore, combining Equations \eqref{slowerbound} and \eqref{supperbound} and picking $n$ sufficiently large, 
\begin{equation}\label{simportant}
\begin{split}
\prob(C_{t+1}) - \prob(C_t) &\geq -\frac{2}{n^{4/3}} \prob(C_t) + \left( \frac{1}{n} - \frac{6}{n^{5/3}}  \right) \left(\frac{4}{5}- \prob(C_t) \right)  \\
&\geq \frac{3/4- \prob(C_t)}{n}
\end{split}
\end{equation}
for $t\geq 5n$. Rearranging analogously to Equation \eqref{mimportant}, 
\begin{equation*}
\left( \frac{3}{4} - \prob(C_{t+1})\right)\leq \left(1- \frac{1}{n} \right)\left( \frac{3}{4} - \prob(C_t)\right)
\end{equation*}
As before, for $t\geq 9n$, $\prob(C_t) \geq \frac{7}{10}$. Combining this with Equation \eqref{sintersectionlowerbound},  
\begin{equation*}
\prob(C_t \cap R_t) \geq \frac{1}{2}
\end{equation*}
for $t\geq 9n$ and $n$ sufficiently large, as required. 
\end{proof}

\section{Growing to $\Theta(n)$}\label{ordernsection}

This section proves Proposition \ref{sbiglemma}, which shows that $s(X_t, Y_t)$ can be grown to order $n$. This section is structured similarly to the previous one: proving a lemma about overall cycle sizes, then a lemma about $m(X_t, Y_t)$, and then finally Proposition \ref{sbiglemma}. Again, use is made of the technical results in Lemmas \ref{schrammshrinklemma} through \ref{sgrowthlemma}. 

The idea behind the proof is largely based on Lemma 2.3 from ``Compositions of random transpositions'' \cite{SchrammLargeCycles}. Let $\epsilon, \delta$ and $j$ be chosen as in Proposition \ref{sbiglemma}. Recall that Definition \ref{Kandtaudef} defines $K =  \lceil \log_2(\epsilon \delta n) \rceil 
$ and
\begin{equation*}
a_r = \lceil 2 \delta^{-1} 2^{-r} n (\log_2 n - r)\rceil \ \text{ and }\ 
\tau_r = \sum_{i=j}^{r-1} a_i
\end{equation*}
for $r$ between $j$ and $K$, with $\tau_j = 0$. Then, define
\begin{equation}\label{intervaldefn}
I_r = [\tau_r, \tau_{r+1}-1]
\end{equation}
and for convenience of notation, define $I_K = \{\tau_K\}$. 

As should be clear from the statement of Proposition \ref{sbiglemma}, the argument starts with $s(\pi, \sigma) \geq 2^{j+1}$ and $V_0(2^{j+1}) \geq \delta n$, and shows that at time $\tau_K$, the probability that $s(X_{\tau_K}, Y_{\tau_K})$ is less than $\epsilon \delta n$ is appropriately bounded above. In fact, something stronger is shown: for the intervals $I_r$ as defined above, one `expects' to have 
\begin{equation*}
V_t (2^{r+1}) \geq \frac{\delta n}{2}, s(X_t^, Y_t) \geq 2^r, \text{ and } m(X_t, Y_t) \geq 2^{r+1}
\end{equation*}
for all $r$ between $j$ and $K$. This would clearly suffice to prove the result. 

The first lemma is almost identical to Lemma 2.3 from \cite{SchrammLargeCycles} -- it is reproven here for completeness, and to illustrate the technique. This lemma starts with $\sigma\in S_n$, and $\left|V_0(2^{j+1})\right| \geq \delta n$. It gives an upper bound for the expected number of vertices that start in cycles of size at least $2^{j+1}$, and that are not in cycles of size $\epsilon \delta n$ at time $\tau_K$. This shows that `most' vertices that start in cycles of size $2^{j+1}$ are in cycles of order $n$ at time $\tau_K$. 

\begin{lemma}\label{Schrammlemma}
Let $\sigma \in S_n$. Let $\delta \in (0,1)$ be a constant such that $\left|V_0(2^{j+1})\right| \geq \delta n$, and let $K$ and $\tau_K$ be defined as they are above and in Definition \ref{Kandtaudef}. Fix $\epsilon \in (0, 1/32)$. For the random transposition walk $\left( \bar{X}_t\right)_{t\geq 0}$,
\begin{equation}\label{schrammlemma} 
\expect \left|V_0(2^{j+1})\setminus V_{\tau_K}(2\epsilon \delta n)\right| \leq O(1) \delta^{-1} \epsilon \left| \log(\epsilon \delta)\right| n 
\end{equation} 
where the constant implied in the $O(1)$ notation is universal.
\end{lemma}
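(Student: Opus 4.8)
The plan is to follow the scheme of Lemma 2.3 of \cite{SchrammLargeCycles}: over each time window $I_r = [\tau_r,\tau_{r+1})$, with $r$ running from $j$ to $K-1$, track the mass of vertices currently sitting in cycles of size at least $2^{r+1}$, and show that over $I_r$ almost all of them ``double'', i.e.\ end up in cycles of size at least $2^{r+2}$. Since $K=\lceil\log_2(\epsilon\delta n)\rceil$ gives $2^{K+1}\geq 2\epsilon\delta n$, it suffices to bound $\expect|V_0(2^{j+1})\setminus V_{\tau_K}(2^{K+1})|$. If $v$ belongs to this set then, since $\tau_j=0$ and $v\in V_{\tau_j}(2^{j+1})$ while $v\notin V_{\tau_K}(2^{K+1})$, there is a first index $r\in\{j,\dots,K-1\}$ with $v\in V_{\tau_r}(2^{r+1})$ but $v\notin V_{\tau_{r+1}}(2^{r+2})$. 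Writing $P_r$ for the expected number of such $v$ for a fixed $r$, this yields
\[
\expect\bigl|V_0(2^{j+1})\setminus V_{\tau_K}(2^{K+1})\bigr|\;\leq\;\sum_{r=j}^{K-1}P_r,
\]
so the whole problem reduces to estimating $P_r$.

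To bound $P_r$, fix a vertex $v$ with $|C_{\tau_r}(v)|\geq 2^{r+1}$ and follow its cycle through $I_r$. If $|C_{\tau_{r+1}}(v)|<2^{r+2}$, then exactly one of three things happened during $I_r$: (i) the cycle of $v$ dropped strictly below $2^{r+1}$ at some step; (ii) it reached size $\geq 2^{r+2}$ at some step and then dropped strictly below $2^{r+2}$; or (iii) it stayed in the band $[2^{r+1},2^{r+2})$ for the entire window. Cases (i) and (ii) are ``downcrossings'' of a threshold $x\in\{2^{r+1},2^{r+2}\}$, and Lemma \ref{schrammshrinklemma} bounds the expected number of downcrossings of $x$ at any single step by $x^2/n$; summing over the $a_r$ steps of $I_r$ and over the two thresholds contributes $O(1)\,a_r\,4^{r}/n$ to $P_r$. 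Case (iii) is controlled by the growth mechanism: at any step $t$ with $|V_t(2^{r+1})|\geq\tfrac{\delta}{2}n$ and $|C_t(v)|\in[2^{r+1},2^{r+2})$, the split--merge dynamics merge the cycle of $v$ with some other cycle of size at least $2^{r+1}$ --- after which $|C(v)|\geq 2^{r+2}$ --- with probability at least $\tfrac{2\cdot 2^{r+1}(\tfrac{\delta}{2}n-2^{r+2})}{n^2}\geq\tfrac{2^{r}\delta}{n}$, using that $2^{r+2}\leq 2^{K+1}<4\epsilon\delta n\leq\tfrac{\delta}{8}n$ for $\epsilon<1/32$. Hence, conditionally on $|V_t(2^{r+1})|$ staying above $\tfrac{\delta}{2}n$ throughout $I_r$, the probability that $v$ is stuck is at most $\bigl(1-\tfrac{2^r\delta}{n}\bigr)^{a_r}\leq e^{-2(\log_2 n-r)}\leq 2^r/n$ by the choice $a_r=\lceil 2\delta^{-1}2^{-r}n(\log_2 n-r)\rceil$; since there are at most $n$ vertices in $V_{\tau_r}(2^{r+1})$, case (iii) contributes $O(2^r)$ to $P_r$. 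Altogether $P_r=O(1)\bigl(a_r 4^r/n+2^r\bigr)=O(1)\,\delta^{-1}2^{r}(\log_2 n-r)$, and summing this geometric-type series over $r=j,\dots,K-1$ --- it is dominated, up to a constant factor, by its last term $r=K-1$, where $2^{K-1}=\Theta(\epsilon\delta n)$ and $\log_2 n-K=\Theta(|\log(\epsilon\delta)|)$ --- yields the claimed bound $O(1)\,\delta^{-1}\epsilon|\log(\epsilon\delta)|\,n$.

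The one genuine difficulty, and the step I expect to be the main obstacle, is the self-referential hypothesis ``$|V_t(2^{r+1})|\geq\tfrac{\delta}{2}n$ throughout $I_r$'' used in case (iii): this is precisely the sort of quantity the lemma is controlling, so it cannot simply be assumed. The remedy is a stopping-time argument. Let $T\leq\tau_K$ be the first time $t$ at which the number of $v\in V_0(2^{j+1})$ with $|C_t(v)|\geq 2^{r+1}$ (where $r$ is the index with $t\in I_r$) falls below $\tfrac{\delta}{2}n$, and run all of the above estimates on the process stopped at $T$; on the stopped process the conditional growth bound of case (iii) is valid at every step, so the estimate $\expect|\textrm{lost mass up to }T|=O(1)\,\delta^{-1}\epsilon|\log(\epsilon\delta)|n$ holds. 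For $\epsilon$ small this bound is less than $\tfrac{\delta}{2}n$, so Markov's inequality forces $\prob(T<\tau_K)$ to be small; on the complementary event the stopped and unstopped processes coincide, while the event $\{T<\tau_K\}$ contributes at most $n\,\prob(T<\tau_K)$ to the expectation, which one checks is of the same (or smaller) order. With this bootstrap in place the decomposition of the first paragraph goes through verbatim, proving the lemma.
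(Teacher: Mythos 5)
Your proposal is correct and follows essentially the same route as the paper's own proof: the per-step shrink bound from Lemma \ref{schrammshrinklemma}, the failure-to-grow estimate $(1-2^{r}\delta n^{-1})^{a_r}\leq 2^r/n$ coming from the choice of $a_r$, and a Markov-inequality bootstrap to handle the self-referential large-parts hypothesis (the paper's sets $H_t$, $\tilde F_t$, $\tilde B_r$ play exactly the roles of your stopping time $T$, your downcrossing counts, and your ``stuck in the band'' case). One bookkeeping point: state the pre-bootstrap lost-mass bound as $O(1)\,\epsilon|\log(\epsilon\delta)|\,n$ with no $\delta^{-1}$ --- which is what your own sum gives, since $\sum_{r\leq K-1}\delta^{-1}2^{r}(\log_2 n-r)=O\bigl(\delta^{-1}2^{K}|\log_2(\epsilon\delta)|\bigr)=O\bigl(\epsilon n|\log(\epsilon\delta)|\bigr)$ because $2^{K}\leq 2\epsilon\delta n$ --- for if you feed the $\delta^{-1}$-inflated form into Markov's inequality, the term $n\,\prob(T<\tau_K)$ comes out of order $\delta^{-2}\epsilon|\log(\epsilon\delta)|n$, weaker than the claimed $\delta^{-1}$, whereas with the sharper form the bootstrap closes exactly as in the paper, where the $F$- and $B$-bounds likewise carry no $\delta^{-1}$ and only the $H$-term does.
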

\begin{proof}[\textbf{Proof:}]
Before beginning the proof, consider what is being shown. Starting with a $\sigma$ such that $\left|V_0(2^{j+1})\right| > \delta n$ means that at least $\delta n$ of the vertices in $\sigma$ are in cycles of size at least $2^{j+1}$. An upper bound on the expected size of $V_0(2^{j+1})\setminus V_{\tau_K}(2 \epsilon \delta n)$ is needed: that is, an upper bound on the expected number of vertices that started off in cycles of size at least $2^{j+1}$ in $\sigma$, and ended up in cycles of size less than $2\epsilon \delta n$ at time $\tau_K$. 

Something stronger is shown: conditioned on $v\in V_0(2^{j+1})$,
\begin{equation}\label{strongbound}
\begin{split}
&\expect \left|\left\{ v \text{ s.t. } C_t(v) < 2^{r+1} \text{ for any } t \in I_r, \text{ for }  r \in [j, K] \right\}\right| \leq\\
 & \hspace{200 pt} O(1) \delta^{-1} \epsilon \left| \log(\epsilon \delta)\right| n
\end{split}
\end{equation}
This requires an upper bound on the expected number of vertices that for any time $t \in I_r$ are `too small' for $I_r$: they are of size less than $2^{r+1}$. Note that the above set includes all vertices such that $C_{\tau_K}(v) < 2\epsilon \delta n \leq 2^{K+1}$, and hence the above bound would suffice. 

Three different possibilities are considered. First of all, an upper bound is needed on the expected number of vertices $v$ such that at any point, the cycle containing $v$ is split, and becomes too small. Secondly, all vertices that appear in permutations with an insufficient number of large parts are rejected. And thirdly, it is necessary to bound the possibility that the cycle containing $v$ does not grow sufficiently during $I_r$. Call the vertices that fall into any of these undesirable categories `failed.'

In the next three sections, condition on $v \in V_0(2^{j+1})$: that is, assume that $v$ is in a cycle of size $2^{j+1}$ in $\sigma$. This means that $v$ has not failed at time $0$. 

\paragraph{The cycle containing $v$ becomes too small} Let $r \in [j, K-1]$, and let $t \in I_r + 1 = [\tau_r+1, \tau_{r+1}]$. For $C_t(v)$ to be of size $2^{r+2}$ by time $\tau_{r+1}$, calculate the probability that for any $t \in I_r +1$, the cycle containing $v$ is split, and $v$ is then contained in a cycle of size less than $2^{r+2}$. To be precise, define $F_t$ to be the set of vertices at time $t$ such that $\left|C_t(v)\right| < \left|C_{t-1}(v)\right|$ and $\left|C_t(v)\right| < 2^{r+2}$. Find the expected size of $F_t$: by definition, this is the expected number of vertices $v$, whose cycle is split from time $t-1$ to time $t$, and which are in cycles of size less than $2^{r+2}$ at time $t$. By Lemma \ref{schrammshrinklemma}, 
\begin{equation*}
\expect \left|F_t\right| \leq \frac{2\left(2^{r+2}\right)^2}{n} = \frac{2^{2r+5}}{n} 
\end{equation*}

Now, define the cumulative set $\tilde{F}_t = \bigcup_{x = 1}^t F_x$. This is the set of all vertices up to time $t$, whose cycles have at any time $x \leq t$ been split into ones that are `too small.' Clearly,  
\begin{equation} \label{firstFineq}
\begin{split}
\expect \left|\tilde{F}_{\tau_K}\right| &\leq \sum_{x = 1}^{\tau_K} \expect \left| F_x\right| \leq \sum_{r=j}^{K-1} a_r \frac{2^{2r+5}}{n}  \\
&\leq \sum_{r = j}^{K-1}\lceil 2 \delta^{-1} 2^{-r} n (\log_2 n - r)\rceil \frac{2^{2r+5}}{n}  \\
&\leq \sum_{r = j}^{K-1}\left( 2 \delta^{-1} 2^{-r} n (\log_2 n - r) + 1\right)\frac{2^{2r+5}}{n} \\ 
&\leq \sum_{r=j}^{K-1} 2^6 \delta^{-1} 2^r (\log_2 n  - r) + \sum_{r=j}^{K-1}\frac{2^{2r+5}}{n}
\end{split}
\end{equation}
Now, 
\begin{align*}
\sum_{r=j}^{K-1} r 2^r &= (K-2)2^K - (j-2)2^j \\
\sum_{r=j}^{K-1} 2^r &= 2^K - 2^j
\end{align*}
shows that  
\begin{equation} \label{Fbound}
\expect \left|\tilde{F}_{\tau_K}\right| \leq 2^8\left| \log_2 (\epsilon \delta)\right| \epsilon  n  
\end{equation}
\paragraph{Permutations with insufficiently many large parts} It is also necessary to rule out vertices in permutations for which the union of the `large parts' isn't sufficiently high. This will be useful for the next part of the proof. To be more precise, let $t \in I_r$: if $\left|V_t(2^{r+1})\right| < \delta n/2$, and this is the first $t$ for which the inequality holds, then consider all vertices in $X_t$ to have failed, and set $H_t = \{1,\dots, n\}$. Otherwise, set $H_t = \emptyset$. 

Again, define the cumulative set $\tilde{H}_t = \bigcup_{x = 0}^t H_x$. This is the union of all vertices that up to time $t$ have been in a permutation with insufficiently many large parts, by the above definition. It is clear that this set is either empty, or contains all the vertices. There is no current available upper bound on the expectation for $\tilde{H}_t$; one will be derived after the next section of the proof. 

\paragraph{The cycle containing $v$ doesn't grow sufficiently} Next, consider how a vertex $v$ might fail at time $t$, if it does not fall into $\tilde{F}_t$ or $\tilde{H}_{t-1}$. Assume $t$ is the minimal time for which $v$ fails: since failed vertices include all vertices contained in cycles that are `too small', if $s < t$ and $s \in I_{k}$ then $\left| C_s(v)\right| \geq 2^{k +1}$. Now, assume that $t$, the first time at which $v$ fails, is in $I_r$: thus, $t-1$ is either in $I_{r-1}$ or in $I_r$. Either way, since it was assumed that $v$ is not in $F_t^\pi$, it can't be that $\left| C_t(v)\right| < \left| C_{t-1}(v)\right|$ and $\left|C_t(v)\right|< 2^{r+1}$. Since the vertex $v$ fails at time $t$, $C_t(v)$ must contain fewer than $2^{r+1}$ vertices. Combine this with the preceding statement to conclude that $C_{t-1}(v)$ also contains fewer than $2^{r+1}$ vertices. However, by definition the vertex $v$ did not fail at time $t-1$. This implies $t-1$ must have been in $I_{r-1}$. Thus, the only remaining times at which vertices could fail are $t = \tau_r$, for $r \in \{j, j+1, \dots, K\}$. Having conditioned on $v \in V_0(2^{j+1})$, it may be concluded that $v$ can't fail at time $\tau_j = 0$.

Now, define $B_r$ to be the set of vertices at time $\tau_r$ that are not in $\tilde{F}^{\tau_r} \cup \tilde{H}^{\tau_r-1}$, such that $\left|C_{\tau_r}(v)\right| < 2^{r+1}$, and that have not failed previously. As before, define $\tilde{B}_r = \bigcup_{x = j}^r B_x$ and estimate the expected size of $B_r$. 

Condition on $v \notin \tilde{F}_{\tau_r} \cup \tilde{H}_{\tau_r-1}$ and calculate the probability that $v$ fails at $\tau_r$, given that it has not failed up to that time. First, for $t\in I_{r-1}$, $\left|C_t(v) \right|\geq 2^r$. Furthermore, since $v \notin \tilde{F}_{\tau_r}$, there was no time between $\tau_{r-1}$ and $\tau_r$ at which the cycle containing $v$ was split to contain fewer than $2^{r+1}$ vertices. This means that if $v$ failed at time $\tau_r$, then $C_t(v)$ must have been of size less than $2^{r+1}$ for all $t \in [\tau_{r-1}, \tau_r - 1]$. Therefore, for $t\in I_{r-1}$,
\begin{equation}\label{cyclebound}
 2^r \leq \left| C_t(v) \right| < 2^{r+1}
\end{equation}

Furthermore, since $v$ is not in $\tilde{H}_t$ for any $t \in I_{r-1}$, for every $t \in I_{r-1}$, $\left|V_t (2^r)\right| \geq \delta n/2$. Consider the probability that from time $t$ to time $t+1$, the cycle containing $v$ is merged with a cycle of size at least $2^r$. By \eqref{cyclebound} above, the size of $C_t(v)$ is at least $2^r$, so such a merge would result in $C_{t+1}(v) \geq 2^{r+1}$. Using the above reasoning implies that $\left|C_{\tau_r}(v)\right| \geq 2^{r+1}$, and therefore $v$ does not fail at time $\tau_r$. Now, again by \eqref{cyclebound}, the cycle containing $v$ is of size at most $2^{r+1}$. Since $\left|V_t(2^r)\right| \geq \delta n/2$, this means the union of the cycles disjoint from $C_t(v)$ of size at least $2^r$ contains at least $\delta n/2 - 2^{r+1}$ vertices. Now, since $r \leq K = \lceil\log_2(\epsilon \delta n)\rceil$, $2^{r+1} \leq 2^{K+1} \leq 4\epsilon\delta n$, and since $\epsilon < 1/32$,
\begin{equation*}
\frac{\delta n}{2} - 2^{r+1} \geq \frac{\delta n}{4}
\end{equation*} 
Thus, the union of the cycles of size at least $2^r$ disjoint from $C_t(v)$ is of size at least $\delta n/4$, and therefore 
\begin{equation*}
\prob \{\text{$C_t(v)$ merges with a cycle of size $\geq 2^r$}\} \geq 2\cdot 2^r\frac{\delta n/4}{n^2} = 2^{r-1}\delta n^{-1}
\end{equation*}
Clearly, for $v$ to be in $B_r$, it cannot be that $C_t(v)$ merges with a cycle of size $\geq 2^r$ for any $t \in I_{r-1}$. Therefore,  
\begin{align} \label{firstbbound}
\prob \{v \in B_r\} &\leq \left(1 - 2^{r-1}\delta n^{-1}\right)^{a_{r-1}}\\
               &\leq \exp(-2^{r-1}\delta n^{-1} a_{r-1}) \nonumber
\end{align}
and since $a_{r-1} \geq 2 \delta^{-1} 2^{-r+1} n (\log_2 n - r+1)$, 
\begin{equation*}
\prob \{v \in B_r\} \leq e^{2(r - 1 - \log_2 n)}
\end{equation*}
Now, $r - 1 \leq K - 1 \leq \log_2(\epsilon \delta n)$, and therefore, $\log_2 n - r + 1 \leq 0$. Thus, 
\begin{equation*}
\prob \{v \in B_r\} \leq e^{2(\log_2 n - r +1)} \leq 2^{\log_2 n -r + 1} =  \frac{2^{r-1}}{n}
\end{equation*}
This yields
\begin{equation*}
\expect \left| B_r \right| \leq n \prob \{v \in B_r\} = 2^{r-1}
\end{equation*}
and therefore, 
\begin{align}\label{Bbound}
\expect \left| \tilde{B}_K \right| &\leq \sum_{r=j}^{K} \expect \left| B_r\right| \leq \sum_{r=j}^{K} 2^{r-1}\nonumber \\
 &\leq 2^K \leq \epsilon \delta n + 1
\end{align}

Finally, bound the expected size of $H_t$, the set of vertices in permutations with insufficiently many large parts. Recall that for $t \in I_r$, if $\left|V_t(2^{r+1})\right| < \delta n/2$ and $t$ was the first time this inequality held, $H_t$ was defined to be the set of all vertices, and it was otherwise defined to be the empty set. If $H_t$ is non-empty, then the set of vertices in $X_t$ that are in cycles of size less than $2^{r+1}$ has size at least $n- \delta n/2 \geq \delta n/2$. Now, consider $v$ in $H_t$ such that $\left|C_t(v)\right| < 2^{r+1}$. By definition, $v$ has failed by time $t$, and  $v$ is not in $H_s$ for any $s < t$. Therefore, each such vertex is in $\tilde{F}_t \cup \tilde{B}_r$. Thus, 
\begin{equation*}
\expect \left|\tilde{H}_{\tau_K}\right| \leq n \prob \left\{\left|\tilde{F}_{\tau_K} \cup \tilde{B}_K\right| \geq \delta n/2 \right\} \leq 2 \delta^{-1} \expect \left|\tilde{F}_{\tau_K} \cup \tilde{B}_K\right|
\end{equation*}
so using \eqref{Fbound} and \eqref{Bbound} above, 
\begin{equation}\label{Hbound}
\begin{split}
\expect \left|\tilde{H}_{\tau_K}\right| &\leq 2\delta^{-1}(\epsilon \delta n + 1 + 2^8\left| \log (\epsilon \delta)\right| \epsilon  n)  \\
&\leq (2^9 + 1) \epsilon \delta^{-1} \left|\log_2(\epsilon \delta)\right| n 
\end{split}  
\end{equation}
as desired. Finally, adding up the expectations for $H_{\tau_K}, B_K$ and $F_{\tau_K}$ in \eqref{Hbound}, \eqref{Bbound} and \eqref{Fbound} completes the proof.  
\end{proof}

The next lemma is similar. It shows that $m(X_t, Y_t)$ becomes sufficiently large at time $\tau_K$. The proof is almost entirely analogous; the only substantial difference is in the bound for the probability of $X_t$ having insufficiently many `large parts.' For this bound, Equation \eqref{Hbound} above has to be used. Lemmas \ref{mshrinklemma} and \ref{mgrowthlemma} will also be used. 

\begin{lemma}\label{mbiglemma}
Assume $\rho(\sigma, \tau) = 1$. Let $j$ be a natural number such that $m(\sigma, \tau) \geq 2^{j+1}$, and let $\delta \in (0,1]$ be a constant such that $\left|V_0(2^{j+1})\right| \geq \delta n$ and $m(\pi, \sigma) \geq 2^{j+1}$. Let $K$ and $\tau_K$ be defined as above, and let $\epsilon \in (0, 1/16)$. Then, 
\begin{equation}\label{provemgrow} 
\prob \{m(X_{\tau_K}, Y_{\tau_K}) < 2\epsilon \delta n\} \leq O(1) \delta^{-1} \epsilon \left| \log(\epsilon \delta)\right|
\end{equation} 
where the constant implied in the $O(1)$ notation is universal.
\end{lemma}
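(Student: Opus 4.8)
The plan is to mirror the proof of Lemma~\ref{mrootnlemma} almost verbatim, replacing the fixed target $n^{1/3}$ by the moving target $2\epsilon\delta n$ and tracking the event from Lemma~\ref{Schrammlemma}. Set $A_t = \{m(X_t,Y_t) \geq 2\epsilon\delta n\}$ and let $S_t = \{|V_t(2\epsilon\delta n)| \geq \delta n/2\}$ play the role of the ``enough large parts'' event. The key structural input is that Lemma~\ref{Schrammlemma} (applied to the random transposition walk underlying $X_t$) bounds $\expect|V_0(2^{j+1}) \setminus V_{\tau_K}(2\epsilon\delta n)|$, and since $|V_0(2^{j+1})| \geq \delta n$, Markov's inequality gives that $\prob\{|V_{\tau_K}(2\epsilon\delta n)| < \delta n/2\}$, i.e. $\prob(S_{\tau_K}^c)$, is at most $O(1)\delta^{-1}\epsilon|\log(\epsilon\delta)|$ after rescaling the constant. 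One subtlety: Lemma~\ref{Schrammlemma} is stated only at the terminal time $\tau_K$, but its proof actually controls the cumulative ``failed'' set, so in fact $|V_t(2^{r+1})| \geq \delta n/2$ for all $t \in I_r$ with the same error probability; I would cite this stronger conclusion (it is exactly the set $\tilde H_{\tau_K}$ being small) to get $\prob(S_t^c) \leq O(1)\delta^{-1}\epsilon|\log(\epsilon\delta)|$ uniformly for $t$ in the relevant range.

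Next I would run the recursive argument on $\prob(A_t)$ across the intervals $I_r$. On $A_t$, Lemma~\ref{mshrinklemma} with $x = 2\epsilon\delta n$ gives $\prob\{(X_{t+1},Y_{t+1}) \notin A_{t+1} \mid (X_t,Y_t) \in A_t\} \leq 2(2\epsilon\delta n)^2/n^2 = 8\epsilon^2\delta^2$, which is a small constant when $\epsilon$ is small. On $A_t^c \cap S_t$, one has $m(X_t,Y_t) < 2\epsilon\delta n$ and $|V_t(2\epsilon\delta n)| \geq \delta n/2$, so Lemma~\ref{mgrowthlemma} with $x=0$, $y = 2\epsilon\delta n$, $R = \delta n/2$ and $c = m(X_t,Y_t)$ gives a lower bound of $2c(\delta n/2 - 2c)/n^2$. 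The difficulty here is that $c$ could be tiny (as small as $2^{j+1}$ on the first interval), so the growth rate is only $\Theta(c/n)$, not $\Theta(1/n)$ — this is precisely why the argument must proceed interval by interval with the geometrically-increasing thresholds $2^{r+1}$ rather than all at once. The right bookkeeping, following the structure sketched before Lemma~\ref{Schrammlemma}, is to show inductively that with high probability at the start of $I_r$ one has $m(X_{\tau_r},Y_{\tau_r}) \geq 2^{r+1}$ and $|V_{\tau_r}(2^{r+1})| \geq \delta n/2$; then during $I_r$ the merge probability is at least $\Theta(2^r \delta/n)$, and since $|I_r| = a_{r-1} \approx 2\delta^{-1}2^{-r}n(\log_2 n - r)$, the expected number of such merges is $\Omega(\log_2 n - r) \gg 1$, so $m$ doubles past $2^{r+2}$ with failure probability $\leq e^{-2(\log_2 n - r)} \leq 2^{r-1}/n$, exactly as in the $B_r$ computation of Lemma~\ref{Schrammlemma}.

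So the cleanest route is actually to not re-derive the recursion from scratch but to piggyback on Lemma~\ref{Schrammlemma}'s proof: define a ``failure'' for the pair $(X_t,Y_t)$ at time $\tau_r$ to be the event that $m(X_{\tau_r},Y_{\tau_r}) < 2^{r+1}$ (together with the permutation-level failures $\tilde F, \tilde H$ already controlled), and observe that the cycle $C_t$ tracked in that lemma can be taken to be the medium cycle $m(X_t,Y_t)$ — Lemmas~\ref{mshrinklemma} and \ref{mgrowthlemma} are the exact $m$-analogues of Lemma~\ref{schrammshrinklemma}'s input and the merge estimate. Summing the three error terms ($\tilde F_{\tau_K}$, the analogue of $\tilde B_K$, and $\tilde H_{\tau_K}$) as in \eqref{Fbound}, \eqref{Bbound}, \eqref{Hbound} gives $\prob\{m(X_{\tau_K},Y_{\tau_K}) < 2\epsilon\delta n\} \leq O(1)\delta^{-1}\epsilon|\log(\epsilon\delta)|$, which is \eqref{provemgrow}. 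The main obstacle, as the excerpt itself flags, is handling the ``insufficiently many large parts'' event: unlike in Lemma~\ref{mrootnlemma} where one could simply invoke Lemma~\ref{fraclemma} at a fixed scale, here the threshold $2^{r+1}$ moves with $r$, so the $\tilde H$ bound must be fed back in through \eqref{Hbound} exactly as in Lemma~\ref{Schrammlemma} — there is a genuine circularity (a vertex in a bad permutation that is in a small cycle must itself be in $\tilde F \cup \tilde B$) that is resolved by the self-referential estimate $\expect|\tilde H_{\tau_K}| \leq 2\delta^{-1}\expect|\tilde F_{\tau_K} \cup \tilde B_K|$, and one must check the slightly weaker hypothesis $\epsilon \in (0,1/16)$ (versus $1/32$ in Lemma~\ref{Schrammlemma}) still suffices to run the bound $\delta n/2 - 2^{r+1} \geq \delta n/4$.
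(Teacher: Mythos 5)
Your final route is exactly the paper's proof: after correctly discarding the fixed-target recursion of Lemma \ref{mrootnlemma} (whose growth rate $\Theta(c/n)$ is indeed too weak when $c$ is small), you track failure of the pair $(X_t,Y_t)$ interval by interval with the three error terms — shrinking controlled by Lemma \ref{mshrinklemma}, insufficient large parts controlled by feeding back the cumulative $\tilde H$ bound \eqref{Hbound} from Lemma \ref{Schrammlemma}, and insufficient growth controlled by Lemma \ref{mgrowthlemma} with $R=\delta n/2$ over the $a_{r-1}$ steps of $I_{r-1}$, giving failure probability at most $2^{r-1}/n$ per scale — and sum as in \eqref{Fbound}, \eqref{Bbound}, \eqref{Hbound}. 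This matches the paper's argument (including your observation that the $\epsilon$-threshold only needs to make $\delta n/2 - 2^{r+2}$ comparable to $\delta n$, which the $O(1)$ constant absorbs), so the proposal is correct and essentially identical in approach.
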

\begin{proof}[\textbf{Proof:}]
This proof is almost exactly analogous to the previous one, except that instead of keeping track of failed vertices, failed pairs of partitions will be tracked. Something stronger is shown: 
\begin{equation}\label{strongmbound}
\prob \{m(X_t, Y_t) < 2^{r+1}\text{ for any } t \in I_r, \text{ for } j \in [j, K]\} \leq O(1) \delta^{-1} \epsilon \left| \log(\epsilon \delta)\right|
\end{equation}
Again, the argument requires upper bounds on three different cases: the one where $m(X_t, Y_t)$ shrinks to become too small at time $t$, the one where $X_t$ doesn't have sufficiently many large parts, and the one where $m(X_t, Y_t)$ fails to grow sufficiently during $I_r$. The only major difference in the proof is use of the bound from Lemma \ref{Schrammlemma} to bound the probability of $X_t$ having insufficiently many large parts.

Since the quantities specified are precisely analogous, use the names $\mathcal{F}_t, \mathcal{B}_t$ and $\mathcal{H}_t$. 

\paragraph{Probability $m(X_t, Y_t)$ gets too small during $I_r$} For $t\in I_r + 1$, define $\mathcal{F}_t$ to be the set of pairs $(X_t, Y_t)$ such that $m(X_t, Y_t) < m(X_{t-1}, X_{t-1})$ and $m(X_t, Y_t) < 2^{r+2}$. Apply Lemma \ref{mshrinklemma} above. Let $x = \min(2^{r+2}, m(X_{t-1}, Y_{t-1}))$. Then, $x \leq m(X_{t-1}, Y_{t-1})$, and therefore from Lemma \ref{mshrinklemma}, the probability that $m(X_t, Y_t)$ is less than $x$ is bounded above by $\frac{2x^2}{n^2}$. By definition of $\mathcal{F}_t$, this means that 
\begin{equation*}
\prob \{\mathcal{F}_t\} \leq \frac{2x^2}{n^2} \leq 2\frac{\left(2^{r+2}\right)^2}{n^2} = \frac{2^{2r +5}}{n^2}
\end{equation*}
Define the cumulative set $\tilde{\mathcal{F}}_t = \bigcup_{x = 1}^t \mathcal{F}_x$. Therefore, 
\begin{equation*}
\prob \{\tilde{\mathcal{F}}_{\tau_K}\} \leq \sum_{x=1}^{\tau_K} \prob \{\mathcal{F}_x\} \leq  \sum_{r=j}^{K-1} a_r\frac{2^{2r+5}}{n^2}
\end{equation*}
and doing a calculation almost identical to \eqref{Fbound}, 
\begin{equation}\label{mFbound}
\prob \{\tilde{\mathcal{F}}_{\tau_K}\} \leq  2^8 \epsilon \left| \log_2 (\epsilon \delta)\right|
\end{equation}
Note that the only difference in the calculation was an extra factor of $n$ in the denominator.
\paragraph{Probability $X_t$ doesn't have enough large parts} Define $\mathcal{H}_t$ almost exactly as $H_t$ in the last lemma, except that instead of making it a set of vertices, let it be a set of pairs $(X_t, Y_t)$. $(X_t, Y_t)$ is included in $\mathcal{H}_t$ precisely when $X_t$ doesn't have enough large parts: that is, if $t \in I_r$, then $(X_t, Y_t)$ is in $\mathcal{H}_t$ if $\left|V_t(2^{r+1})\right| < \delta n/2$, and $t$ is the first time for which this inequality holds. Define $\tilde{\mathcal{H}_t}$ as usual to be the cumulative set. 

Clearly, if $(X_t, Y_t) \in \mathcal{H}_t$, then $H_t$ contains $n$ vertices, and otherwise $H_t$ is empty. Since $\left|V_0(2^{j+1})\right| \geq \delta n$, the results derived in Lemma \ref{Schrammlemma} can be used. Therefore,  
\begin{equation*}
\prob \{\mathcal{H}_t\} = \frac{1}{n}\expect \left| H_t\right|
\end{equation*}
and thus, from \eqref{Hbound} above, 
\begin{equation}\label{mHbound}
\prob \{\mathcal{H}_t\} \leq (2^9 + 1) \epsilon \delta^{-1} \left|\log_2(\epsilon \delta)\right| 
\end{equation}

\paragraph{Probability $m(X_t, Y_t)$ doesn't grow sufficiently during $I_r$} As before, the only remaining times that $(X_t, Y_t)$ can fail is at times $\tau_r$. Accordingly, define $\mathcal{B}_r$ to be those pairs $(X_{\tau_r}, Y_{\tau_r})$ that are not in $\tilde{\mathcal{F}}_{\tau_r}$ or $\tilde{\mathcal{H}}_{\tau_r - 1}$, such that $m(X_{\tau_r}, Y_{\tau_r}) < 2^{r+1}$ and that have not failed previously. As before, if $(X_{\tau_r}, Y_{\tau_r})$ is in $\mathcal{B}_r$, then it had not failed in $I_{r-1}$, and therefore, for $t\in I_{r-1}$, $m(X_t, Y_t) \geq 2^r$. Furthermore, since $(X_{\tau_r}, Y_{\tau_r})$ is not in $\tilde{\mathcal{F}}_{\tau_r}$, it must be that $m(X_t, Y_t)$ is less than $2^{r+1}$ for $t\in I_{r-1}$. Thus, for $t\in I_{r-1}$, 
\begin{equation}\label{mbound}
2^r \leq m(X_t, Y_t) < 2^{r+1}
\end{equation}
Furthermore, since $\mathcal{B}_r$ is disjoint from $\mathcal{H}_{\tau_r -1}$, for every $t\in I_{r-1}$, $\left|V_t(2^r)\right| \geq \delta n/2$. Since $m(X_t, Y_t) \geq 2^r$, Lemma \ref{mgrowthlemma} holds with $R = \delta n/2$ and $x = y = 2^r$. Let $c = m(X_t, Y_t)$. Thus, for  any $t \in I_r$,  
\begin{equation*}
\prob \{m(X_{t+1}, Y_{t+1}) \geq 2^{r+1}\} \geq \frac{2c(\delta n/2 - 2c)}{n^2} \geq \frac{2^{r+1}(\delta n/2 - 2^{r+2})}{n^2}
\end{equation*}
and since $\epsilon < \frac{1}{32}$, and $r \leq K \leq \log_2(\epsilon \delta n) + 1$, $\delta n/2 - 2^{r+2} \geq \delta n/4$. Thus, 
\begin{equation*}
\prob \{m(X_{t+1}, Y_{t+1}) \geq 2^{r+1}\} \geq 2^{r-1} \delta n^{-1}
\end{equation*}
Finally, the probability of $\mathcal{B}_r$ is the probability that $m(X_{t+1}, Y_{t+1})$ isn't at least $2^{r+1}$ for any $t \in I_r$, and therefore,
\begin{equation*} 
\prob \{\mathcal{B}_r\}\leq \left(1 - 2^{r-1}\delta n^{-1}\right)^{a_{r-1}}
\end{equation*}
and since this is precisely the same inequality as in \eqref{firstbbound},  
\begin{equation*}
\prob \{\mathcal{B}_r\} \leq \frac{2^{r-1}}{n}
\end{equation*}
and hence
\begin{equation}\label{mBbound}
\prob \{\tilde{\mathcal{B}}_{K}\} \leq \sum_{r=j}^K \prob \{\mathcal{B}_r\} \leq \sum_{r=j}^K \frac{2^{r-1}}{n} \leq \epsilon \delta + \frac{1}{n}
\end{equation}
Thus, adding \eqref{mFbound}, \eqref{mHbound}, and \eqref{mBbound}, 
\begin{equation} \label{mfinalbound}
\begin{split}
&\prob \{m(X_t, Y_t) < 2^{r+1}\text{ for any } t \in I_r\text{, for } r\in [j, K]\} 
\leq \\
&\hspace{170 pt} (2^9 + 2^8+1) \epsilon \delta^{-1} \left|\log_2(\epsilon \delta)\right| 
\end{split}
\end{equation}
which is what is needed. 
\end{proof}

The stage is almost set to prove an analogous result for $s(X_t, Y_t)$. As above, the two technical Lemma \ref{sshrinklemma} and \ref{sgrowthlemma} are used. As in the previous section, $m(X_t, Y_t)$ must be `sufficiently large' to allow $s(X_t, Y_t)$ to grow. This is the reason for proving the lemma concerning $m(X_t, Y_t)$ first. 

\begin{prop2}
Let $(X_t, Y_t)$ be the usual coupling started at $(X_0, Y_0) = (\sigma, \tau)$, where $\rho(\sigma, \tau) \leq 1$. Let $j$ be a number and let $\delta \in (0,1]$ be a constant such that $\left|V_0(2^{j+1})\right| \geq \delta n$ and $s(\sigma, \tau) \geq 2^{j+1}$. If $K$ and $\tau_K$ are defined as in Definition \ref{Kandtaudef} and $\epsilon \in (0, 1/32)$, then
\begin{equation*}
\prob\{s(X_{\tau_K}, Y_{\tau_K}) < \epsilon \delta n\} \leq O(1) \delta^{-1} \epsilon \left| \log(\epsilon \delta)\right| 
\end{equation*} 
where the constant implied in the $O(1)$ notation is universal.
\end{prop2}

\begin{proof}[\textbf{Proof of Lemma \ref{sbiglemma}}]
This proof is analogous to the proof of Lemma \ref{Schrammlemma} and \ref{mbiglemma}, except that the previous two lemmas are used to bound the probability that $s(X_t, Y_t)$ shrinks or grows. As before, a stronger statement is proved: 
\begin{equation}\label{strongsbound}
\prob \{s(X_t, Y_t) < 2^ r\text{ for any } t \in I_r \text{, for } r\in [j, K]\} \leq O(1) \delta^{-1} \epsilon \left| \log(\epsilon \delta)\right|
\end{equation}

Again, bounds are needed for a number of different cases: for the probability that $s(X_t, Y_t)$ shrinks to become too small during $I_r$, the probability that $X_t$ doesn't have enough large parts, and that the probability that $s(X_t, Y_t)$ doesn't grow sufficiently on $I_r$. Furthermore, note that Lemma \ref{sgrowthlemma} requires the assumption that $m(\sigma, \tau) \geq 2x$ to lower bound on the probability that $s(X_1, Y_1) \geq 2x$. Since $s(X_t, Y_t)$ must grow during $I_r$ to be at least $2^{r+1}$ by $\tau_{r+1}$, $m(X_t, Y_t)$ must be at least $2^{r+1}$ on $I_r$. Lemma \ref{mbiglemma} is used to bound the probability that $m(X_t, Y_t)$ is too small. 

The quantities are precisely analogous to the ones in the two similar previous lemmas. Accordingly, name them $\mathscr{F}_t, \mathscr{H}_t$, and $\mathscr{B}_t$, using the same letters but yet another font. The new quantity $\mathscr{M}_t$ is added, as discussed above. 

\paragraph{Probability $s(X_t, Y_t)$ gets too small during $I_r$} For $t\in I_r +1 = [\tau_r +1, \tau_{r+1}]$, define $\mathscr{F}_t$ to be the set of pairs $(X_t, Y_t)$ such that $s(X_t,Y_t) < s(X_{t-1}, Y_{t-1})$ and $s(X_t, Y_t) < 2^{r+1}$. Apply Lemma \ref{sshrinklemma} above. Define $x = \min(2^{r+1}, s(X_{t-1}, Y_{t-1}))$. Then, $x\leq s(X_{t-1}, Y_{t-1})$, and therefore Lemma \ref{sshrinklemma} applies. Plugging it in, the probability that $s(X_t, Y_t)$ is less than $x$ is at most $\frac{4x^2}{n^2}$. Thus, 
\begin{equation*}
\prob \{\mathscr{F}_t\} \leq \frac{4x^2}{n^2} \leq \frac{4(2^{r+1})^2}{n^2} = \frac{2^{2r + 4}}{n^2}
\end{equation*}
Now, define the cumulative set $\tilde{\mathscr{F}}_t = \bigcup_{x=1}^t \mathscr{F}_x$. Then, 
\begin{equation*}
\prob \{\tilde{\mathscr{F}}_{\tau_K}\} \leq \sum_{x = 1}^{\tau_K} \prob \{\mathscr{F}_x\} \leq 
\sum_{r=j}^{K-1} a_r \frac{2^{2r +4}}{n^2}
\end{equation*}
Doing a calculation identical to the one in \eqref{Fbound} and \eqref{mFbound}, 
\begin{equation}\label{sFbound}
\prob \{\tilde{\mathscr{F}}_{\tau_K}\}\leq 2^7\epsilon \left| \log_2(\epsilon \delta)\right|
\end{equation}
\paragraph{Probability $X_t$ doesn't have enough large parts} For $t\in I_r$, define $\mathscr{H}_t$ very similarly to before, to be the set of $(X_t, Y_t)$ such that $\left|V_t(2^r)\right| < \delta n/2$, whenever this is the first $t$ for which this inequality holds. Define $\tilde{\mathscr{H}}_t$ to be the usual cumulative set. Now, from Lemma \ref{mbiglemma}, 
\begin{equation*}
\mathcal{H}_t = \{(X_t, Y_t) \left| \right. \left|V_t(2^{r+1})\right| < \delta n/2\}
\end{equation*}
Since $V_t(2^r) \supseteq V_t(2^{r+1})$, clearly $\mathscr{H}_t \supseteq \mathcal{H}_t$, and therefore, using \eqref{mHbound}
\begin{equation}\label{sHbound}
\prob \{\tilde{\mathscr{H}}_t\} \leq \prob \{\tilde{\mathcal{H}}_t\} \leq (2^9 +1)\epsilon \delta^{-1}\left|\log_2(\epsilon \delta)\right|
\end{equation}
\paragraph{Probability $m(X_t, Y_t)$ is too small} For $t\in I_r$, define $\mathscr{M}_t$ to be the set of all $(X_t, Y_t)$ such that $m(X_t, Y_t) < 2^{r+1}$. As usual, define $\tilde{\mathscr{M}}_t$ to be the cumulative set. Since at the start $s(\sigma, \tau)\geq 2^{j+1}$,  $m(\sigma, \tau)\geq  2^{j+1}$ is forced. By assumption, $V_0(2^{j+1}) \geq \delta n$, so any inequalities derived in Lemma \ref{mbiglemma} are in force. Thus, from Equation \eqref{mfinalbound},
\begin{equation*}
\prob \{m(X_t, Y_t) < 2^{r+1}\text{ for any } t \in I_r\text{, for } r\in [j, K]\} \leq (2^9 + 2^8+1) \epsilon \delta^{-1} \left|\log_2(\epsilon \delta)\right| 
\end{equation*}
and clearly, from the definition of $\tilde{\mathscr{M}}^t$,  
\begin{equation}\label{sMbound}
\prob \{\tilde{\mathscr{M}}^t\} \leq  (2^9 + 2^8+1) \epsilon \delta^{-1} \left|\log_2(\epsilon \delta)\right| 
\end{equation}

\paragraph{Probability $s(X_t, Y_t)$ doesn't grow sufficiently during $I_r$} As before, the only remaining times that $s(X_t,Y_t)$ can fail is at time $\tau_r$. Therefore, define $\mathscr{B}_r$ to be the set of $(X_{\tau_r},Y_{\tau_r})$ that are not in $\tilde{\mathscr{F}}_{\tau_r}$, $\tilde{\mathscr{H}}_{\tau_r - 1}$ or $\tilde{\mathscr{M}}_{\tau_r}$, such that $s(X_{\tau_r}, Y_{\tau_r}) < 2^r$ and that have not failed previously. If $(X_{\tau_r}, Y_{\tau_r})$ is in $\mathscr{B}_r$, then it had not failed in $I_{r-1}$, and therefore for $t\in I_{r-1}$, $s(X_t, Y_t) \geq 2^{r-1}$. Furthermore, since $(X_{\tau_r}, Y_{\tau_r})$ is not in $\tilde{\mathscr{F}}_{\tau_r}$, for $t\in I_{r-1}$, $s(X_t, Y_t) < 2^r$. Thus, for $t\in I_{r-1}$,
\begin{equation}\label{sbound}
2^{r-1} \leq s(X_t, Y_t) < 2^r
\end{equation}
Furthermore, since $(X_{\tau_r}^\pi, X_{\tau_r}^\sigma)$ is not in $\tilde{\mathscr{M}}_{\tau_r}$, for $t \in I_{r-1}$ 
\begin{equation*}
m(X_t, Y_t) \geq 2^r
\end{equation*}
Finally, since $\mathscr{B}_r$ is disjoint from $\mathscr{H}_{\tau_r-1}$, for every $t\in I_{r-1}$, $\left| V_t(2^{r-1})\right| \geq \delta n/2$. Now apply Lemma \ref{sgrowthlemma} with $R = \delta n/2$ and $x = y = 2^{r-1}$. For any $t\in I_r$, 
\begin{equation*}
\prob \{s(X_{t+1}, Y_{t+1}) \geq 2^r\} \geq \frac{2x(R-3x - 3y)}{n^2} = \frac{2^r(\delta n/2 -3\cdot 2^r)}{n^2}
\end{equation*}
Since $r \leq K = \lceil \log_2(\epsilon \delta n)\rceil$, and since $\epsilon < \frac{1}{32}$, $3\cdot 2^r \leq 6\epsilon \delta n \leq \frac{\delta n}{4}$. Thus, 
\begin{equation*}
\prob \{s(X_{t+1}, Y_{t+1}) \geq 2^r\} \geq 2^{r-2} \delta n^{-1}
\end{equation*}
Finally, the probability of $\mathscr{B}_r$ is the probability that $s(X_{t+1}, Y_{t+1})$ isn't at least $2^{r+1}$ for any $t \in I_r$, and therefore,
\begin{equation*}
\prob \{\mathscr{B}_r\} \leq \left(1-2^{r-2}\delta n^{-1}\right)^{a_{r-1}} \leq \exp(-2^{r-2}\delta n^{-1} a_{r-1})
\end{equation*}
Now, since $a_{r-1} = \lceil 2 \delta^{-1} 2^{-r+1} n (\log_2 n - r+1)\rceil$, 
\begin{equation*}
\prob \{\mathscr{B}_r\} \leq  e^{r-1 - \log_2 n}  \leq 2^{r-1-\log_2 n} = \frac{2^{r-1}}{n} 
\end{equation*}
using the fact that $r \leq K \leq \log_2 n + 1$, and hence $r - 1 - \log_2 n \leq 0$. Therefore, 
\begin{equation}\label{sBbound}
\prob \{\tilde{\mathscr{B}}_r\} \leq \sum_{r=j}^{K} \prob \{\mathscr{B}_r\} \leq \sum_{r=j}^{K} \frac{2^{r-1}}{n} \leq \frac{2^r}{n} \leq \epsilon \delta  + \frac{1}{n}
\end{equation}
Now, adding \eqref{sFbound}, \eqref{sHbound}, \eqref{sMbound} and \eqref{sBbound}, 
\begin{equation}
\prob \{s(X_t, Y_t) < 2^r\text{ for any } t \in I_r\text{, for } r\in [j, K]\} \leq 2^{11} \delta^{-1} \epsilon \left| \log(\epsilon \delta)\right|
\end{equation}
as required. \end{proof}

\begin{rmk}\label{constantlarge}
Assiduously tracking down all the constants in the above argument shows that the mixing time was  bounded above by $2^{25} n \log n$ or so. This, of course, is very far from the correct answer of $\frac{1}{2} n \log n$. While this argument can almost certainly be mildly tweaked to give a less intimidating answer such as $10 n \log n$, it is unlikely that it could be manipulated to give the right constant. 
\end{rmk}

\section{Technical Lemmas}\label{schrammtechnicallemmas}
In this section, the technical results in Lemmas \ref{schrammshrinklemma} through \ref{sgrowthlemma} are proved. For the convenience of the reader, the results are restated. 

\begin{lem14}
Let $\sigma$ be in $S_n$, and let $(\bar{X}_t)_{t\geq 1}$ be the random transposition walk starting at $\sigma$. Then, the expected number of $v$ such that $\left|C_1(v)\right| < \left|C_0(v)\right|$ and $\left|C_1(v)\right| < x$ is no greater than $\frac{x^2}{n}$. 
\end{lem14}

\begin{proof}[\bf Proof:]
Let $\sigma = (a_1, \dots, a_m)$. Clearly, the only way that $\left| C_1(v) \right| < \left| C_0(v) \right|$ is if the cycle containing $v$ is split; furthermore, the only way that $\left|C_1(v)\right| < x$ is if $v$ winds up in a piece of size less than $x$. The `ordered' splitting formula shows that the probability of splitting $a_i$ into $(r, a_i-r)$ is $\frac{a_i}{n^2}$. Consider the cases where either $r < x$ or $a_i -r < x$. Thus, summing over the possible $a_i$, 
\begin{align*}
&\expect \left| \{ v \text{ s.t. }\left| C_1(v) \right| <\left| C_0(v) \right|, \left| C_1(v) \right| < x\} \right| \leq \\
&\hspace{150 pt}\sum_{i=1}^m \left( \sum_{r = 1}^{x-1} r \cdot \frac{a_i}{n^2} + \sum_{r = a_i - x + 1}^{a_i - 1} (a_i - r) \cdot \frac{a_i}{n^2}\right) 
\end{align*}
It's clear that
\begin{align*}
\sum_{r = a_i - x + 1}^{a_i - 1} (a_i - r)\frac{a_i}{n^2} &=  \sum_{r = 1}^{x-1} r \cdot \frac{a_i}{n^2} = \frac{a_i}{n^2} \sum_{c=1}^{x-1} r \leq \frac{a_ix^2}{2n^2} 
\end{align*} 
Therefore, 
\begin{align*}
\expect \left| \{ v \text{ s.t. }\left| C_1(v) \right| <\left| C_0(v) \right|, \left| C_1(v) \right| < x\} \right| &\leq \sum_{i=1}^m  \frac{a_ix^2}{n^2}= \frac{x^2}{n^2} \sum_{i=1}^m a_i \\
&= \frac{x^2}{n^2} \cdot n = \frac{x^2}{n} 
\end{align*}
as required. \end{proof}

For the next four lemmas, let $(X_t, Y_t)$ be our usual coupling starting at $(\sigma, \tau)$, where $\rho(\sigma, \tau) = 1$, $s(\sigma, \tau) = b$ and $m(\sigma, \tau) = c$. For these proofs, it will be useful to reference the original definition of the coupling and the possible pairs $(X_1, Y_1)$ in Definition \ref{schrammcouplingdefn}. 

\begin{lem15}
If $x \leq c$, then 
\begin{equation*}
\prob\left\{m(X_1, Y_1) < x\right\} \leq \frac{2x^2}{n^2}. 
\end{equation*}
\end{lem15}

\begin{proof}[\bf Proof:]
Let us assume without loss of generality that 
\begin{align} \label{defps1}
\sigma &= (a_1, \dots, a_n, b, c) \\
\tau &= (a_1, \dots, a_n, b+c)\nonumber
\end{align}

Consider how $m(X_1, Y_1)$ could be smaller than $c$. Note that performing an operation involving only the $a_i$ on $\sigma$ and $\tau$, then $X_1$ and $Y_1$ will still differ in $b,c$ and $b+c$, so $m(X_1, Y_1) = c$. Furthermore, merging $a_i$ with $b$ in $\sigma$ and $a_i$ with $b+c$ in $\tau$, then $X_1$ and $Y_1$ will differ in the parts $(b+a_i, c, b+c+a_i)$, which are greater, respectively, than $(b, c, b+c)$. This means that $m(X_1, Y_1) \geq c$. Similar reasoning holds for merging $a_i$ with $c$ in $\sigma$, and hence these cases do not contribute to $\prob \{m(X_1, Y_1) < x\}$. 

Also, note that if $b$ is split into $\{r, b-r\}$ for $r \leq \frac{b}{2}$, then
\begin{align*}
X_1 &= (a_1, \dots, a_m, r, b-r,c) \\
Y_1 &= (a_1, \dots, a_m, r, b+c-r)
\end{align*}
Clearly, $c \geq b\geq b-r$, and therefore $m(X_1, Y_1) = c$. Thus, $m$ cannot decrease if $b$ is split in $\sigma$. This gives cases: splitting $c$ in $\sigma$, and merging $b$ and $c$ in $\sigma$. The cases in which the coupling meets can be ignored, since $m(\alpha, \alpha) = n \geq c$, and hence these cases do not contribute to $\prob \{m(X_1, Y_1) < x\}$. 

\paragraph{Splitting $c$ in $\sigma$:} If $c$ is split into $\{r, c-r\}$ for $r\leq \frac{c}{2}$, then 
\begin{align*}
X_1 &= (a_1, \dots, a_m, r, b, c-r)\\
Y_1 &= (a_1, \dots, a_m, r, b+c-r)
\end{align*}
Clearly, $m(X_1, Y_1) \geq c-r$. Thus, to have $m(X_1, Y_1) <x$, it must be that $c-r < x$, and thus $r > c-x$. By definition, $r\leq \frac{c}{2}$, and hence 
\begin{equation*}
c-x < r \leq \frac{c}{2}
\end{equation*}
If $2x < c$, this set contains no elements, so assume for now that $2x \geq c$. Then the number of possible $r$ is at most $\frac{c}{2} - (c-x) = \frac{2x - c}{2}$. Since the probability of splitting $c$ into $\{r, c-r\}$ is at most $\frac{2c}{n^2}$ for each $r$, 
\begin{equation}\label{msplitc}
\prob \left( m(X_1, Y_1) < x, \text{ $c$ split in $\sigma$} \right)
\leq \frac{c(2x-c)}{n^2} \leq \frac{x^2}{n^2}
\end{equation} 
using the AM-GM inequality and the assumption that $2x - c \geq 0$. Furthermore, the above inequality also holds when $2x < c$, since in that case, the left-hand side is $0$. 

\paragraph{Merging $b$ and $c$ in $\sigma$:}
If $b$ and $c$ are merged in $\sigma$,  
\begin{align*}
X_1 &= (a_1, \dots, a_m, b+c)\\
Y_1 &= (a_1, \dots, a_m, s, b+c-s)
\end{align*}
for some $s \leq \frac{b+c}{2}$. Hence, $m(X_1, Y_1) = b+c-r$. Again, to have $b+c - r < x$, it must be that $s > b+c-x$, and the probability of each split is at most $\frac{2(b+c)}{n^2}$. Thus, analogously to above, consider
\begin{equation*}
b+c -x < r \leq \frac{b+c}{2}
\end{equation*} 
and hence the total number of such $s$ is at most $\frac{2x - (b+c)}{2}$ if $2x \geq b+c$, and $0$ otherwise. Therefore, if $2x \geq b+c$, 
\begin{equation}\label{mmergebc}
P\{m(X_1, Y_1) < x, \text{ $b$ and $c$ merged in $\pi$}\} \leq \frac{(b+c)(2x - (b+c))}{n^2} \leq \frac{x^2}{n^2}
\end{equation} 
again using AM-GM. This clearly also holds for $2x < b+c$. 

Finally, adding \eqref{msplitc} and \eqref{mmergebc}, 
\begin{equation*}
P\{m(X_1, Y_1) < x\} \leq \frac{2x^2}{n^2}
\end{equation*}
as required. 
\end{proof}
\begin{lem16}
If $x \leq c$, and $| V_0(y)| \geq R$, then
\begin{equation*}
\prob \{m(X_1, Y_1) \geq x +y \} \geq  \frac{2c(R- 2c)}{n^2} 
\end{equation*}
\end{lem16}
\begin{proof}[\bf Proof:]
Consider both the possibilities that 
\begin{align}\label{case1}
\pi &= (a_1, \dots, a_m, b, c)\\
\sigma &= (a_1, \dots, a_m, b+c)\nonumber 
\end{align}
and that
\begin{align}\label{case2} 
\pi &= (a_1, \dots, a_m, b+ c)\\
\sigma &= (a_1, \dots, a_m, b, c)\nonumber 
\end{align}
with $b\leq c$, since $V_t(y)$ is defined for $X_t$ and not $Y_t$, and therefore the symmetry breaks down. Merging $c$ with an $a_i \geq y$ will result in $m(X_1, Y_1) = c + a_i \geq x + y$. To calculate the probability of such a merge, the sum of these $a_i$ is needed.

In both cases \eqref{case1} and \eqref{case2}, since $\sigma$ and $\tau$ agree on the $a_i$, 
\begin{equation}\label{lowerbd}
\sum_{a_i\geq y} a_i \geq |V_0(y)| - (b+c) \geq R - 2c
\end{equation}
using Remark \ref{Vtremark}. For case \eqref{case1}, merging $c$ and some $a_i \geq y$ in $\sigma$ gives
\begin{align*}
X_1 &= (a_1', \dots, a_{m-1}', b, c+a_i) \\
Y_1 &= (a_1', \dots, a_{m-1}', b+ c+ a_i)
\end{align*}
where $\{a_1', \dots, a_{m-1}'\} = \{a_1, \dots, a_m\}/\{a_i \}$. Clearly, $c+a_i \geq b$, and therefore $m(X_1, Y_1) = c+ a_i \geq x + y$. The probability of merging $c$ with $a_i$ in $\sigma$ is $\frac{2ca_i}{n^2}$, and thus 
\begin{equation*}
\prob \{m(X_1^\pi, X_1^\sigma) \geq x+y\}  \geq \sum_{a_i\geq y} \frac{2ca_i}{n^2} = \frac{2c}{n^2}\sum_{a_i\geq y} a_i \geq \frac{2c(R-2c)}{n^2}
\end{equation*}
using Equation \eqref{lowerbd} for the last inequality. Thus, in case \eqref{case1} the proof is finished. Furthermore, since Equation \eqref{lowerbd} is  symmetric for the cases \eqref{case1} and \eqref{case2}, the second case is completely analogous. 
\end{proof}

\begin{lem17}
If $x \leq b$, then 
\begin{equation*}
\prob \{s(X_1, Y_1) < x\} \leq \frac{4x^2}{n^2}
\end{equation*}
\end{lem17}

\begin{proof}[\bf Proof:]
For simplicity, assume without loss of generality that $\pi$ and $\sigma$ satisfy \eqref{defps1} above. In the same way as in Lemma \ref{mshrinklemma} above, any operations involving $a_i$ cannot make $s(X_1, Y_1)$ smaller than $b$. Thus, the operations that might produce $s(X_1, Y_1) < x$ involve either splitting $b$ in $\sigma$, splitting $c$ in $\sigma$, or merging $b$ and $c$ in $\sigma$. Consider these cases separately. In the same way as before, the cases where the coupling meets can be ignored. 

\paragraph{Splitting $b$ in $\sigma$:} Recall that if $b$ is split into $\{r, b-r\}$ for $r\leq \frac{b}{2}$, then 
\begin{align*}
X_1 &= (a_1, \dots, a_m, r, b-r,c) \\
Y_1 &= (a_1, \dots, a_m, r, b+c-r)
\end{align*}
Thus, $s(X_1, Y_1) = \min(b-r, c) = b-r$. To have $s(X_1, Y_1) < x$, $b -r <x$ is needed. Hence, consider $r$ such that 
\begin{equation*}
b-x < r \leq \frac{b}{2}
\end{equation*}
If $2x < b$, this set contains no elements, so assume $2x \geq b$. Clearly, the above set is of size at most $x - \frac{b}{2}$. The probability of splitting $b$ into $(s, b-s)$ is at most $\frac{2b}{n^2}$ for each $s\leq \frac{b}{2}$, and therefore 
\begin{equation}\label{splitb} 
\prob \{s(X_1, Y_1) < x, \text{ $b$ split in $\sigma$}\} \leq \frac{2b}{n^2}\left(x - \frac{b}{2}\right) = \frac{b(2x - b)}{n^2} \leq \frac{x^2}{n^2}
\end{equation}
using AM-GM and the assumption that $2x \geq b$ for the last inequality. This clearly also holds if $2x < b$, since in that case the left-hand side is $0$. 

\paragraph{Splitting $c$ in $\sigma$:} 
This calculation is very similar to the above. The probability that $c$ is split into $\{r, c-r\}$, where $r \leq \frac{c}{2}$ and  $c-r < x$ is needed. Again, consider
\begin{equation*}
c-x < r \leq \frac{c}{2}
\end{equation*}
and since the probability of a particular split is at most $\frac{2c}{n^2}$, assuming that $2x \geq c$, the total probability of all these cases is at most 
\begin{equation}\label{splitc}
\prob \{s(X_1, Y_1) < x, \text{ $c$ split in $\sigma$}\} \leq \frac{c(2x - c)}{n^2} \leq \frac{x^2}{n^2}
\end{equation}
which again holds trivially when $2x < c$. 

\paragraph{Merging $b$ and $c$ in $\sigma$:} Recall that merging $b$ and $c$ in $\sigma$ is coupled with splitting $b+c$ into $\{r, b+c-r\}$ in $\tau$, where each split in $\tau$ occurs with the probability that it has not already been coupled with a split of $b$ or $c$ in $\sigma$. Thus, in this case, 
\begin{align*}
X_1 &= (a_1, \dots, a_m, b+c) \\
Y_1 &= (a_1, \dots, a_m, r, b+c-r)
\end{align*}
Assuming as usual that $r \leq \frac{b+c}{2}$, $s(X_1, Y_1) = r$. Now calculate the probability that $r < x$. Define
\begin{equation*}
P_r = \prob \{\text{$b$ and $c$ merge in $\pi$, $b+c$ splits into $\left\{ r, b+c-r \right\}$ in $\sigma$}\}
\end{equation*}
and bound $P_r$ for various values of $r$. Consider three different cases: 
\begin{itemize}
\item $r < \frac{b}{2}$: In this case, splitting $b+c$ into $\{r,  b+c-r\}$ in $\tau$ is coupled with both spliting $b$ into $\{r, b-r\}$ in $\sigma$ and with splitting $c$ into $\{r, c - r \}$ in $\sigma$. Thus,
\begin{equation} \label{Prdefn} 
P_r = \frac{2(b+c)}{n^2} - \frac{2b}{n^2} - \frac{2c}{n^2} = 0
\end{equation} 

\item $\frac{b}{2} \leq r < \frac{c}{2}$: In this case, splitting $b+c$ into $\{r, b+c-r\}$ in $\tau$ is coupled with splitting $c$ into $\{r, c-r\}$ in $\sigma$. Thus, 
\begin{equation*}
P_r \leq  \frac{2(b+c)}{n^2} - \frac{2c}{n^2} = \frac{2b}{n^2}
\end{equation*} 

\item $\frac{c}{2} \leq r$: In this case, splitting $b+c$ into $\{r, b+c-r\}$ in $\tau$ isn't coupled with any splits in $\sigma$. Hence, 
\begin{equation*}
P_r \leq  \frac{2(b+c)}{n^2}
\end{equation*} 
\end{itemize} 
Therefore, the reasoning above shows
\begin{equation}\label{Psbound}
P_r \leq 
\begin{cases} 0 & r< \frac{b}{2}\\
              \frac{2b}{n^2} & \frac{b}{2} \leq r < \frac{c}{2} \\
              \frac{2b+2c}{n^2} & \frac{c}{2} \leq r \leq \frac{b+c}{2} 
\end{cases} \ \   \Longrightarrow \ \ P_r \leq  \frac{4r}{n^2}
\end{equation}
where the right-hand inequality uses the fact that $b \leq c$. Therefore,  
\begin{align}\label{mergebc}
\prob \{s(X_1, Y_1) < x \text{, $b$ and $c$ merge in $\sigma$}\}
              &= \sum_{r=1}^{x-1} P_r \leq \sum_{r=1}^{x-1} \frac{4r}{n^2}
              = 4\frac{x(x-1)}{2n^2} \nonumber \\
              &\leq \frac{2x^2}{n^2}
\end{align}
Adding Equations \eqref{splitb}, \eqref{splitc} and \eqref{mergebc} gives
\begin{equation*}
\prob \{s(X_1, Y_1) < x\} \leq \frac{4x^2}{n^2}
\end{equation*}
as required. 
\end{proof}
\begin{lem18}
If $x$ and $y$ satisfy $x \leq b < x+y \leq c$, and $| V_0(y)| \geq R$, 
\begin{equation*}
\prob \{s(X_1, Y_1) \geq x + y \} \geq  \frac{2b(R- 3x-3y)}{n^2} 
\end{equation*}
\end{lem18}
\begin{proof}[\bf Proof:]
Just like in Lemma \ref{mgrowthlemma}, consider the two possibilities that  
\begin{align}\label{scase1}
\sigma &= (a_1, \dots, a_m, b, c)\\
\tau &= (a_1, \dots, a_m, b+c)\nonumber 
\end{align}
and that
\begin{align}\label{scase2} 
\sigma &= (a_1, \dots, a_m, b+ c)\\
\tau &= (a_1, \dots, a_m, b, c)\nonumber 
\end{align}
since $V_t(y)$ depends on $X_t$ and not on $Y_t$. As in the previous lemma, in both cases \eqref{scase1} and \eqref{scase2}, 
\begin{equation}\label{aibound}
\sum_{a_i \geq y} a_i \geq |V_0(y)| - (b+c) \geq R - (b+c)
\end{equation}
so case \eqref{scase1} may be assumed. Identical arguments will apply for \eqref{scase2}.

There are two possible ways to have $s(X_1, Y_1) \geq x+y$: either $b$ can merge with an $a_i \geq y$ in $\sigma$, or $b$ and $c$ can merge in $\sigma$, while $b+c$ can be split into $\{r, b+c -r \}$ in $\tau$, where $r\geq x+y$. Consider those cases separately. 

\paragraph{Merging $b$ and $a_i \geq y$ in $\sigma$:} Note that if $b$ and $a_i$ are merged in $\sigma$, then 
\begin{align*}
X_1 &= (a_1', \dots, a_{m-1}', b + a_i, c)\\
Y_1 &= (a_1', \dots, a_{m-1}', b + a_i+ c)
\end{align*}
where $\{a_1', \dots, a_{m-1}' \} = \{a_1, \dots, a_m\}/\{a_i\}$. Therefore, $s(X_1, Y_1) = \min(b+a_i, c)$. Since $b\geq x$ and $a_i\geq y$, $b+a_i\geq x+y$. By assumption, $c \geq x + y$, and so $s(X_1, Y_1) \geq x + y$. 

The probability of $b$ merging with a particular $a_i$ is $\frac{2ba_i}{n^2}$, and using the bound in Equation \eqref{aibound}, 
\begin{align}\label{saibound}
\prob \{s(X_1, Y_1) \geq x+y, b \text{ merges with some $a_i$ in $\sigma$} \} &= \sum_{a_i\geq y} \frac{2ba_i}{n^2} 
                               = \frac{2b}{n^2}\sum_{a_i \geq y} a_i\nonumber\\
                              &\geq \frac{2b(R-(b+c))}{n^2}
\end{align}

\paragraph{Merging $b$ and $c$ in $\sigma$:} If $c < 2x + 2y$, it will later show that the above bound in Equation \eqref{saibound} suffices. Therefore, for this case, assume that $c \geq 2x + 2y$. Consider the probability of merging $b$ and $c$ in $\sigma$, while splitting $b+c$ in $\tau$ into $\{r, b+c - r\}$, where $r \geq x+y$. 

Let $P_r$ be defined as in Equation \eqref{Prdefn}. Now a lower bound on 
\begin{align*}\label{Prsumlowerbound}
\sum_{r \geq x+y} P_r &= \prob\{\text{merge $b$ and $c$ in $\sigma$}\} \\ & \qquad - \prob \{\text{merge $b$ and $c$ in $\sigma$, stay at $\tau$}\} 
-  \sum_{r < x+y} P_r 
\end{align*} 
is needed. The above equality follows because merging $b$ and $c$ in $\sigma$ is always either coupled with splitting $b+c$ into $\{r, b+c-r\}$ in $\tau$, or staying at $\tau$. Here is a lower bound for the right-hand side.

To start, $c \geq 2x + 2y > 2b$. By Equation \eqref{lowerboundmergingbandc},
\begin{equation*}
\prob \left\{\text{merge $b$ and $c$ in $\sigma$, stay at $\tau$}\right\} =\min\left(p, \frac{1}{n} \right)
\end{equation*}
where $p = \prob(\text{$b +c$ split into $\{b, c\}$ in $\tau$, staying at $\sigma$})$. Now, since $c > 2b$, 
\begin{align*}
p &= \prob(\text{$b+c$ split into $\{b, c\}$ in $\tau$})- \prob (\text{$c$ split into $\{b, c - b\}$ in $\sigma$}) \\
 &= \frac{2(b+c)}{n^2} - \frac{2c}{n^2} = \frac{2b}{n^2} 
\end{align*}
and hence, since $b \leq \frac{n}{2}$,  
\begin{equation*}
\prob \left\{\text{merge $b$ and $c$ in $\sigma$, stay at $\tau$}\right\} =\min\left(\frac{2b}{n^2}, \frac{1}{n} \right) = \frac{2b}{n^2}
\end{equation*}
Furthermore, since $x+y \leq \frac{c}{2}$, Equation \eqref{Psbound} above implies that if $r < x+y$ then $P_r \leq \frac{2b}{n^2}$, and therefore
\begin{equation*}
\prob \{\text{merge $b$ and $c$ in $\sigma$, stay at $\tau$}\} + \sum_{r < x+y} P_r \leq \frac{2b(x+y)}{n^2}
\end{equation*}
Thus, since the probability of merging $b$ and $c$ is $\frac{2bc}{n^2}$, 
\begin{align}\label{mergelowerbound}
\prob \{s(X_1, Y_1) \geq x+y, \text{$b$ and $c$ merge in $\sigma$}\} &= \sum_{r \geq x+y} P_r \geq \frac{2bc}{n^2} - \frac{2b(x+y)}{n^2} \nonumber \\
& = \frac{2b(c - x - y)}{n^2}
\end{align} 
Combining all this information, if $c < 2x + 2y$, then Equation \eqref{saibound} shows that 
\begin{equation*}
\prob\{ s(X_1, Y_1) \geq x + y \} \geq \frac{2b(R-(b+c))}{n^2} \geq \frac{2b(R- 3x - 3y)}{n^2} 
\end{equation*} 
using the fact that $b \leq x + y$. Furthermore, if $c \geq 2x + 2y$, then combining Equation \eqref{saibound} and \eqref{mergelowerbound},  
\begin{align*}
\prob \{s(X_1, Y_1) \geq x+y\} &\geq \frac{2b\left(R-(b+c)\right)}{n^2} + \frac{2b\left(c-x-y\right)}{n^2} \\
&\geq \frac{2b(R-2x - 2y)}{n^2}
\end{align*}
Hence, in either case $\prob \{s(X_1, Y_1) \geq x+y\}  \geq \frac{2b(R - 2x - 2y)}{n^2}$, completing the proof. 
\end{proof}

    \bibliographystyle{plain}

    \bibliography{mybib}

\end{document}